\documentclass[a4paper,11pt]{amsart}
\usepackage{amssymb,amsmath,amsfonts,amscd,euscript,accents,mathtools}
\usepackage{accents}
\newcommand{\ubar}[1]{\underaccent{\bar}{#1}}
\usepackage[backref=page]{hyperref}
\usepackage{color}
\usepackage[svgnames, x11names]{xcolor}
\definecolor{pink}{rgb}{1, 0, 1}

\usepackage[shortlabels]{enumitem}
\usepackage{todonotes}

% for graphs
%\usepackage[usenames,dvipsnames]{xcolor} 
\usepackage{tikz, tikz-3dplot, pgfplots, graphicx}
\usepackage{tikz-cd}
\usetikzlibrary{arrows,shapes,automata,backgrounds,petri,positioning, graphs}

\numberwithin{equation}{section}

\newtheorem{theoremA}{Theorem}

\newtheorem{thm}{Theorem}[section]
\newtheorem{prop}[thm]{Proposition}
\newtheorem{lem}[thm]{Lemma}
\newtheorem{cor}[thm]{Corollary}
\newtheorem{rem}[thm]{Remark}
\newtheorem{definition}[thm]{Definition}
\newtheorem{example}[thm]{Example}
\newtheorem{dfn}[thm]{Definition}

\newtheorem{conj}[thm]{{\bf Conjecture}}

\newcommand{\J}{\mathcal{J}}

\newcommand{\bC}{{\mathbb C}}

\newcommand{\bZ}{{\mathbb Z}}
\newcommand{\bN}{{\mathbb N}}

\newcommand{\fg}{{\mathfrak g}}

\newcommand{\addots}{\text{\reflectbox{$\ddots$}}}

\DeclareMathOperator{\Aut}{Aut}

\DeclareMathOperator{\Span}{Span}

\pgfplotsset{compat=1.18} 
\begin{document}

%\begin{Frontmatter}

\title[Symplectic Grassmannians and cyclic quivers]
{Symplectic Grassmannians and cyclic quivers}

\author{Evgeny Feigin}
\address{E. Feigin:\newline
School of Mathematical Sciences, Tel Aviv University, Tel Aviv
69978, Israel
}
\email{evgfeig@gmail.com}
\author{Martina Lanini}
\address{M. Lanini:\newline Dipartimento di Matematica\\ Universit\`a di Roma ``Tor Vergata'',  Via della Ricerca Scientifica 1, I-00133 Rome, Italy}
\email{lanini@mat.uniroma2.it}
\author{Matteo Micheli}
\address{M. Micheli: \newline Dipartimento di Matematica "Guido Castelnuovo"\\ Sapienza Universit\`a di Roma, Piazzale Aldo Moro 5, I-00185 Rome, Italy}
\email{matteo.micheli@uniroma1.it}
\author{Alexander P\"utz}
\address{A. P\"utz:\newline
Institute of Mathematics\\
University Paderborn\\
Warburger Str. 100\\\newline
D-33098 Paderborn\\
Germany}
\email{alexander.puetz@math.uni-paderborn.de}

\keywords{}

\begin{abstract}
The goal of this paper is to extend the quiver Grassmannian description of certain
degenerations of Grassmann varieties to the symplectic case.
We introduce a symplectic version of quiver Grassmannians studied in our previous
papers and prove a number of results on these projective algebraic varieties. 
First, we construct a cellular decomposition of the symplectic quiver Grassmannians
in question and develop combinatorics needed to compute Euler characteristics and Poincar\'e polynomials. Second, we show that the number of irreducible components of our varieties coincides with the Euler characteristic of the classical symplectic Grassmannians. Third, we describe the automorphism groups of the underlying symplectic quiver representations and show that the cells are the orbits of this group. Lastly, we provide an embedding into the affine flag varieties for the affine symplectic group.
\end{abstract}

%\end{Frontmatter}
\maketitle

\section{Introduction}
The classical Grassmann varieties $\mathrm{Gr}(k,n)$ admit a flat
degeneration into certain reducible algebraic varieties $X(k,n)$. The construction emerged from arithmetics as local models of Shimura varieties \cite{Go01,Ga01,PRS13}, but is also
very natural from the point of view of complex algebraic geometry \cite{Kn08,Zho19}. In \cite{FLP22,FLP23a,FLP23b}
the quiver Grassmannians approach describing these degenerations was
developed. More precisely, the authors considered certain modules $U_{[n]}$ over the cyclic quivers $\Delta_n$ such that $X(k,n)$ is isomorphic to the quiver Grassmannian 
of subrepresentations of $U_{[n]}$ of dimension $k$ at each vertex. 
This paper is the first step towards the extension of the quiver approach to the symplectic case (see \cite{BF19,BF20,BC22,BCFF24} for the non cyclic type $A$ case).

The varieties $X(k,n)$ admit many nice properties. Having a quiver Grassmannians realization at hand, one is able to use various techniques from the algebraic, combinatorial and
geometric theories of quivers in order to study various structures related to $X(k,n)$. In particular, one can use the automorphism groups of the underlying 
representations in order to describe cellular decompositions and to establish a link with the positroid decomposition of totally nonnegative Grassmannians. Looking at the side of local models of Shimura varieties, one gets generalizations for other classical groups \cite{Go03,Pa18,PZ22} (we note that only the Lagrangian case was considered).
Hence it is natural to ask for the quiver realization with a bilinear form added as an extra piece of data. Our  goal  is to 
initiate the study of the corresponding quiver Grassmannians.

In this paper we deal only with the symplectic form on a $2n$ dimensional complex vector space. 
The symplectic Grassmannians $\mathrm{Gr}(k,2n)^{sp}$ have been extensively studied in the literature  (see e.g. \cite{DC,FH}). For each $k=1,\dots,n$ we define a degeneration $X(k,2n)^{sp}$ of the corresponding symplectic Grassmannian as follows.
The symplectic form on the $2n$ dimensional complex vector space as above induces a non degenerate skew-symmetric form on the representation space $U_{[2n]}$. Note that this is a symplectic representation in the sense of Derksen-Weyman \cite{DW}. Then
$X(k,2n)^{sp}$ is defined as a subvariety of $X(k,2n)$ consisting of self-orthogonal subrepresentations.
Our first theorem is as follows.

\begin{theoremA} (Theorem~\ref{thm:top-dim-cells})
The intersection of a cell of $X(k,2n)$ with $X(k,2n)^{sp}$
is either empty or an affine cell. The top dimensional cells of $X(k,2n)^{sp}$ are obtained from top dimensional cells of $X(k,2n)$.
The dimension of $X(k,2n)^{sp}$ is equal to the dimension of the classical symplectic Grassmannian $\mathrm{Gr}(k,2n)^{sp}$ and the number of
irreducible components of $X(k,2n)^{sp}$ is equal to the Euler characteristic of $\mathrm{Gr}(k,2n)^{sp}$.
\end{theoremA}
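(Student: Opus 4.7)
The plan is to analyze the situation cell by cell using the cellular decomposition of $X(k,2n)$ established in \cite{FLP22,FLP23a,FLP23b}. Each cell $C_I$ of $X(k,2n)$ carries an explicit affine parametrization $\mathbb{A}^{d_I}\to C_I$ in which a subrepresentation is written in a normal form whose free matrix entries are the affine coordinates. The self-orthogonality condition $\omega(x,y)=0$ pulls back to a system of polynomial equations in these coordinates, and the first claim to prove is that this system either has no solution or defines an affine subspace.

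To see this, I would exploit that the skew-symmetric form on $U_{[2n]}$ pairs the component at vertex $i$ with the component at a matched vertex, so the self-orthogonality splits into one equation per suitable ordered pair of normal-form basis vectors. I would order these equations compatibly with the combinatorics of $I$ (for instance, by the pair of indices involved) and show that each successive equation is linear in a coordinate not constrained by the previous equations, modulo lower-order terms already solved. This triangular structure produces either an affine cell, whose dimension is $d_I$ minus the number of independent equations, or the empty set whenever a step forces an inconsistency.

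The second step is to identify top-dimensional cells and count components. A top-dimensional cell of $X(k,2n)^{sp}$ is the intersection of some $C_I$ with $X(k,2n)^{sp}$; since the symplectic equations only cut dimension, the ambient $C_I$ must itself be top-dimensional in $X(k,2n)$. Conversely, the top cells of $X(k,2n)$ with nonempty symplectic intersection are precisely those whose indexing data $I$ is symmetric under the involution induced by the symplectic form, and for these the dimension drop equals the codimension of $\mathrm{Gr}(k,2n)^{sp}$ in $\mathrm{Gr}(k,2n)$. Since $\dim X(k,2n)=\dim\mathrm{Gr}(k,2n)$ (the degeneration preserves dimension), the dimension formula follows. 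Moreover, the number of symmetric top-dimensional cells equals the cardinality of the Schubert decomposition of the classical symplectic Grassmannian, which is $\chi(\mathrm{Gr}(k,2n)^{sp})$. Each top-dimensional cell is open in its Zariski closure, and distinct top cells lie in distinct irreducible components.

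The main obstacle is step one: establishing the triangular-linear structure of the self-orthogonality equations in the normal-form coordinates of each cell. This requires combining the explicit normal form from the previous papers with a careful bookkeeping of the action of the symplectic involution on the indexing combinatorics, so that each new equation can be solved for a fresh free variable. Once this is in place, everything else reduces to a combinatorial count of symmetric top-indexing data and a comparison with the classical Schubert stratification of $\mathrm{Gr}(k,2n)^{sp}$.
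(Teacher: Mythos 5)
Your overall plan---work cell by cell, pull back the self-orthogonality condition to explicit equations, count the dimension drop, and then count top-dimensional cells---matches the shape of the paper's argument, but there are two genuine gaps where you assert what actually needs to be proved, and those assertions carry essentially all the weight.

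First, you claim that a top-dimensional cell of $X(k,2n)^{sp}$ must sit inside a top-dimensional cell of $X(k,2n)$ ``since the symplectic equations only cut dimension.'' That inference is invalid: a priori a non-top cell $C_I$ of $X(k,2n)$ could lose fewer dimensions under the symplectic cut than a top cell does, so you cannot conclude that the ambient cell is top-dimensional without controlling the codimension uniformly. In the paper this is exactly what the key combinatorial result (Lemma~\ref{problemslemma}) supplies: starting from any non-maximal symplectic juggling pattern $\J$, a single mutation upward either stays symplectic or can be corrected by one more mutation to land on a symplectic $\J''>\J$; this produces a strictly larger symplectic cell whose closure contains $C_\J^{sp}$, so non-maximal $\J$ never give top cells. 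Nothing in your proposal replaces that step.

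Second, you assert that the dimension drop on a top cell equals the codimension of $\mathrm{Gr}(k,2n)^{sp}$ in $\mathrm{Gr}(k,2n)$, i.e.\ $\binom{k}{2}$, and from there read off the dimension. Again this is not proved: you would need to show that among the $k(2n-k)$ equations obtained from self-orthogonality on a maximal cell, exactly $\binom{k}{2}$ are independent. The paper proves this (Proposition~\ref{symplecticdim}) by counting mutations at a maximal $S_\J$: of the $k(2n-k)$ mutations, $k(k-1)$ break isotropy and must be paired via Lemma~\ref{problemslemma} into $\binom{k}{2}$ ``symplectic pairs,'' each contributing a single free parameter; Lemma~\ref{lem:symplectic-mutations} then identifies the cell dimension with the number of symplectic mutations. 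Your proposed ``triangular-linear'' analysis of the equations is a plausible alternative route to the same count, but you yourself flag it as the main obstacle, and it is not a small one: the paper does not verify self-orthogonality directly in the cell coordinates but rather identifies $C_\J\cap X(k,2n)^{sp}$ with the $G^{sp}$-orbit of $p_\J$ (Proposition~\ref{goodcells}) and derives the affine-cell structure from the inductive solvability of the equations defining $G^{sp}$ (Lemma~\ref{conditions}), a triangular system on the automorphism group rather than on the cell. That change of viewpoint is what makes the triangularity tractable. To complete your argument you would either need to reconstruct an analogue of Lemma~\ref{problemslemma} to control both the dimension drop and the ``top from top'' claim, or prove the triangular structure of the self-orthogonality equations directly in cell coordinates, neither of which is done here.
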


Let $G$ be the automorphism group of $U_{[2n]}$ and let $G^{sp}$ be its subgroup
preserving the symplectic form. It was shown in \cite{FLP22,FLP23a,FLP23b} that $G$ is a degeneration of the general linear group $GL_{2n}$ and the action of $G$ can be used to describe various properties of the varieties $X(k,2n)$.
We show that similar properties hold true in the symplectic case as well.

\begin{theoremA} (Theorem~\ref{thm:group-orbits-are-cells})
The group $G^{sp}$ is a degeneration of the classical group $Sp_{2n}$; in particular, $\dim G^{sp}=2n^2+n$. The group $G^{sp}$ acts on $X(k,2n)^{sp}$ with a finite number of orbits, the orbits are affine cells and are naturally labeled by certain combinatorial gadgets that we call  symplectic juggling patterns.
\end{theoremA}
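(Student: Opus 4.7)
My plan is to proceed in three stages: first compute the dimension of $G^{sp}$ and exhibit it as a degeneration of $Sp_{2n}$, then identify the $G^{sp}$-orbits on $X(k,2n)^{sp}$ with the affine cells produced in Theorem~\ref{thm:top-dim-cells}, and finally match these orbits with an explicit combinatorial datum deserving the name \emph{symplectic juggling pattern}.

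For the dimension and degeneration, I would start from the explicit matrix description of $G = \Aut(U_{[2n]})$ from \cite{FLP22,FLP23a,FLP23b}: $G$ is a connected affine algebraic group of dimension $4n^2$ which arises as the special fiber of a one-parameter flat family whose generic fiber is $GL_{2n}$. The subgroup $G^{sp}$ is cut out by the single tensor equation $g^T J g = J$, where $J$ is the Gram matrix of $\omega$ in a fixed Darboux basis of $U_{[2n]}$. Passing to the Lie algebra, I would verify that $\mathfrak{g}^{sp} = \mathfrak{g} \cap \mathfrak{sp}_{2n}$ is cut out by $X^T J + J X = 0$ and compute its dimension by counting, block by block in a basis of $\mathfrak{g}$ adapted to $J$, the number of independent linear constraints imposed; the answer should come out to $2n^2+n$. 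The degeneration statement then follows by taking the scheme-theoretic intersection of the existing family $\mathcal{G}\subset GL_{2n}\times\mathbb{A}^1$ with $Sp_{2n}\times\mathbb{A}^1$: because the form $\omega$ is constant along the parameter, this intersection is flat, its generic fiber is $Sp_{2n}$, and its special fiber is $G^{sp}$.

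Next, the action of $G^{sp}$ on $X(k,2n)^{sp}$ is automatic, since $G$ preserves subrepresentations of $U_{[2n]}$ and $G^{sp}$ in addition preserves $\omega$. By Theorem~\ref{thm:top-dim-cells}, the nonempty intersections of the $G$-orbit cells of $X(k,2n)$ with $X(k,2n)^{sp}$ are affine cells, and each is $G^{sp}$-stable by construction, so finiteness of the orbit set is immediate once we show each such intersection is a single orbit. To establish transitivity I would fix a combinatorially distinguished basepoint $p$ of such a cell $C$, recall the explicit description of $\Stab_G(p)$ from \cite{FLP23a}, and intersect it with the equation $g^T J g = J$ to obtain $\Stab_{G^{sp}}(p)$. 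An entry-wise dimension count should yield the equality $\dim G^{sp} - \dim \Stab_{G^{sp}}(p) = \dim C$; since $G^{sp}\cdot p$ is an irreducible, constructible, $G^{sp}$-stable subset of the irreducible cell $C$ of matching dimension, it must coincide with $C$.

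Finally, I would define a symplectic juggling pattern as a juggling pattern in the sense of \cite{FLP23a} that is fixed by the order-reversing, sign-twisted involution $\tau$ on $\{1,\dots,2n\}$ induced by $\omega$; the cells of $X(k,2n)$ meeting $X(k,2n)^{sp}$ should be exactly those whose pattern is $\tau$-symmetric, which gives the claimed labeling. The principal obstacle I anticipate is the transitivity step of the previous paragraph: $G$ is non-reductive, so the usual Bruhat-type decompositions are unavailable, and a careful analysis of how root-type vectors in $\mathfrak{g}$ pair under $\omega$ will be needed to ensure that imposing the symplectic constraint on the stabilizer does not overshoot and leave $G^{sp}\cdot p$ too small. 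Formulating symplectic juggling patterns so that their count matches the number of nonempty intersections on the geometric side is the combinatorial counterpart of this bookkeeping and is where I expect the main work to lie.
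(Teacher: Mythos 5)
Your overall architecture parallels the paper's, but there are several genuine problems in the details.

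\textbf{Circularity.} You invoke Theorem~\ref{thm:top-dim-cells} to get that the nonempty intersections of $G$-cells with $X(k,2n)^{sp}$ are affine cells, and then use this to set up the orbit identification. But in the paper, Theorem~\ref{thm:top-dim-cells} is a \emph{consequence} of Theorem~\ref{thm:group-orbits-are-cells} (its proof cites the latter in the first sentence). The paper establishes ``intersection is an orbit'' directly via Proposition~\ref{goodcells}: given $V\in C_\J\cap X(k,2n)^{sp}$, one constructs $A\in G^{sp}$ with $V=A.p_\J$ by solving the relations of Lemma~\ref{conditions} inductively in the index $r$. Your dimension-counting route (matching $\dim G^{sp}-\dim\Stab_{G^{sp}}(p)$ with $\dim C$ and using irreducibility) is a legitimate alternative in principle, but it cannot lean on Theorem~\ref{thm:top-dim-cells}.

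\textbf{Wrong combinatorial characterization.} You propose that a symplectic juggling pattern is one \emph{fixed} by the order-reversing, sign-twisted involution on $\{1,\dots,2n\}$. That is incorrect. The involution $R$ sends $JP(k,2n)$ to $JP(2n-k,2n)$ (with a simultaneous vertex reflection $i\mapsto -i$), so for $k<n$ a pattern can never be $R$-fixed; the defining condition in the paper is $\J\subseteq R\J$, i.e. $J_i\cap\tilde J_{-i}=\emptyset$ (isotropy), which is essentially the \emph{opposite} of ``$\tau$-symmetric.'' Only for $k=n$ do the two coincide (Lagrangian case). This is not a minor slip: the claimed bijection between cells meeting $X(k,2n)^{sp}$ and ``$\tau$-symmetric patterns'' would give the wrong count, and the downstream bookkeeping you flag as the main difficulty would collapse.

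\textbf{Imprecision in the Lie algebra condition.} You write the linearization as $X^T J+JX=0$ as if $\mathfrak{g}^{sp}$ were a naive intersection $\mathfrak{g}\cap\mathfrak{sp}_{2n}$. But $\mathfrak{g}$ consists of \emph{tuples} $(x_i)_{i\in\bZ_{2n}}$, one block per vertex, and the symplectic condition (Definition~\ref{def:symp-automorphism}) pairs the block at vertex $i$ against the block at vertex $-i$: linearizing gives $x_i^t\Omega+\Omega\,x_{-i}=0$, i.e. $x_{-i}=\Omega x_i^t\Omega$. Imposing $x_i^t\Omega+\Omega x_i=0$ on each block separately defines a different (and smaller) subalgebra. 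The paper handles this via the involution $\sigma_\fg(x)_i=\Omega x_{-i}^t\Omega$ and a spanning set $y(a,b)=\tfrac12[x(a,b)+(-1)^a x(a,a-b)]$ with the relation $y(a,b)=(-1)^a y(a,a-b)$, which cleanly yields $\dim\fg^{\sigma_\fg}=2n^2+n$; if you want a basis count, you must respect this vertex-reflecting pairing.

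\end{document}
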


Recall that the varieties $X(k,2n)$ admit embeddings into the affine  flag  varieties of type $A$ \cite{HR20,Zho19,Zhu19}.
These embeddings have  simple and transparent descriptions in terms of quiver representations via the lattice realization of affine Grassmannians and affine flag varieties (\cite{FLP23a,HZ23-1,HZ23-2}).
We show that one can extend the type $A$ picture to the symplectic case. More precisely, we prove 
the following theorem (it is known for $k=n$ from the arithmetic side by \cite{PRS13}).

\begin{theoremA}
(Theorem~\ref{thm:emb-affine-symp-flag}) The varieties $X(k,2n)^{sp}$ admit an embedding into the flag variety for the affine symplectic group $Sp_{2n}$. The image of the embedding is a union of Schubert varieties. 
\end{theoremA}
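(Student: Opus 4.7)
The plan is to build the embedding by composing the known type $A$ embedding of $X(k,2n)$ into the affine flag variety of $GL_{2n}$ with the inclusion of the affine symplectic flag variety into it, and to verify that self-orthogonality of a subrepresentation corresponds to self-duality of the associated lattice chain. The image being a union of Schubert varieties will then come for free from Theorem~\ref{thm:group-orbits-are-cells} together with the projectivity of $X(k,2n)^{sp}$.

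First I would recall from \cite{FLP23a,HZ23-1,HZ23-2} the lattice realization, giving the embedding $\iota\colon X(k,2n)\hookrightarrow\mathcal{F}\ell^{aff}_{GL_{2n}}$: a subrepresentation $V\subset U_{[2n]}$ is sent, via pullback along the projection from a standard lattice in $\mathbb{C}((t))^{2n}$, to a periodic chain of lattices. The affine flag variety of $Sp_{2n}$ sits inside $\mathcal{F}\ell^{aff}_{GL_{2n}}$ as the closed subvariety of those lattice chains that are self-dual with respect to a fixed non-degenerate symplectic form $\omega_{((t))}$ on $\mathbb{C}((t))^{2n}$.

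The main step is to match forms: one has to choose the identification of $U_{[2n]}$ with the appropriate truncation of the standard lattice so that the skew-symmetric form on $U_{[2n]}$ inherited from the symplectic form on $\mathbb{C}^{2n}$ agrees with the restriction of $\omega_{((t))}$. Concretely, after fixing a basis of $U_{[2n]}$ in which the form pairs a vector at vertex $j$ with its mirror at vertex $-j\bmod 2n$, I would verify that the orthogonal complement of $V$ in $U_{[2n]}$ is sent by $\iota$ to the $\omega_{((t))}$-dual of the lattice chain of $V$. Once this holds, $V$ is self-orthogonal exactly when $\iota(V)$ is self-dual, so $\iota$ restricts to a closed embedding $X(k,2n)^{sp}\hookrightarrow\mathcal{F}\ell^{aff}_{Sp_{2n}}$.

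For the Schubert assertion I would use Theorem~\ref{thm:group-orbits-are-cells}: $G^{sp}$ is a degeneration of $Sp_{2n}$ and extends naturally to a subgroup of an Iwahori of the affine symplectic loop group, with respect to which $\iota$ is equivariant. Each of the finitely many $G^{sp}$-orbits on $X(k,2n)^{sp}$ therefore maps into a single Schubert cell of $\mathcal{F}\ell^{aff}_{Sp_{2n}}$, and since $X(k,2n)^{sp}$ is projective its image is closed, hence a finite union of Schubert varieties. The main obstacle will be the form-matching step: pinning down the correct identification between $U_{[2n]}$ and a truncation of the symplectic lattice, with the right signs, vertex labels, and compatibility with the Iwahori structure, so that both the self-duality of lattice chains and the equivariance for the degenerate symplectic group fall out simultaneously. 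Everything else then reduces cleanly to transport of structure from the type $A$ embedding and from Theorem~B.
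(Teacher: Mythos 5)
Your proposal follows essentially the same route as the paper: use the type $A$ lattice embedding $\varphi\colon X(k,2n)\hookrightarrow\mathcal{AF}_{2n}$ from \cite{FLP23a}, verify that the symplectic form on $U_{[2n]}$ is carried to the residue pairing on $V[t,t^{-1}]$ so that self-orthogonal subrepresentations land in $\mathcal{AF}^{sp}_{2n}$, and then invoke $G^{sp}$-equivariance with respect to the Iwahori $\mathbb{I}^{sp}$ together with projectivity to get a union of Schubert varieties. The paper carries out the form-matching you correctly flag as the main technical step by an explicit computation with the maps $\eta_{j,d}$ and the shifted lattices $\mathring{L}_c$, but the strategy is the one you outline.
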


We put forward two conjectures:
\begin{itemize}
    \item The degeneration from the symplectic Grassmannian $\mathrm{Gr}(k,2n)^{sp}$ to \newline $X(k,2n)^{sp}$ is flat (see Section~\ref{sec:degeneration} for more detail);
    \item The poset structure on the set  of cells of $X(k,2n)^{sp}$ is induced from the corresponding poset of cells of $X(k,2n)$ (see Conjecture~\ref{conj:induced-partial-order}). 
\end{itemize}

Finally, let us mention several possible further directions.
First, it would be interesting to study the projections $X_S(k,2n)^{sp}$ of the varieties
$X(k,2n)^{sp}$ to the product of Grassmannians corresponding to a
subset $S$ of vertices of the quiver $\Delta_{2n}$. Second, the varieties
$X(k,2n)$ admit certain $\omega$-generalizations $X(k,2n,\omega)$ \cite{FLP23a}, it would be interesting to develop the corresponding symplectic story. Third, one is interested in the projections $X_S(k,2n,\omega)^{sp}$. In all these cases one expects a link 
to a flat degeneration from Schubert varieties in affine Grassmannians to unions of Schubert varieties in affine flag varieties (see \cite{Ga01,Zho19,FLP23b}). Fourth, the quiver Grassmannian realization allows one to use quiver techniques for the construction of resolutions of singularities \cite{FF13,CFR13,PR23}.
It is natural to ask for a similar construction in the symplectic  case (see \cite{FFL14} for a special case of such construction).
Fifth, it is natural to expect a link between the topology of 
the varieties $X(k,2n)^{sp}$ and the symplectic version of the totally nonnegative Grassmannians (in the spirit of \cite{FLP22}, see also \cite{Kar18,W05,Pos06}). Lastly, it is natural to study the orthogonal case
(both even and odd) and the case of odd symplectic Grassmannians \cite{M07,Pr88}.

\vspace{2mm}
Our paper is organized as follows. In Section \ref{sec:background} we provide the needed background; in particular, we introduce here the poset $JP(k,n)$ of $(k,n)$-juggling patterns and the juggling variety $X(k,n)$, as well as their symplectic analogues $JP(k,2n)^{sp}$ and $X(k,2n)^{sp}$. 
Section \ref{sec:automorphisms} is about reductive group actions: we recall the action of an appropriate degeneration of $GL_{2n}$ on $X(k,2n)$, we define an involution on such a group and look at both its fixed point subgroup $G^{sp}$ (the symplectic automorphism group) and at the corresponding Lie algebra.
The symplectic automorphism group acts on $X(k,2n)^{sp}$ and we devote the rest of this section to the investigation of its orbits. More precisely, we show that they are parameterized by symplectic juggling patterns and that they are all affine spaces. 
In Section \ref{sec:orbits} we deepen the study of the $G^{sp}$-orbits on $X(k,2n)^{sp}$. More specifically, we connect the orbit closure inclusion relation
to certain combinatorial moves on juggling patterns called mutations, and exploit them to determine the cell dimension.
Moreover, we provide an explicit counting of the top dimensional cells, and hence determine the dimension of our variety of interest. 
Section \ref{sec:affine} deals with the affine Grassmannians and flag variety: after recalling the embedding of $X(k,2n)$ into the $GL_{2n}$ affine flag variety, we show that $X(k,2n)^{sp}$ can be embedded into the $Sp_{2n}$-affine flag variety. In Appendix \ref{sec:appendix} we exhibit Poincar\'e polynomials and Euler characteristics of $X(k,2n)$ and its symplectic analogue for $n=1,2,3,4$ and $k\leq n$.

\section{Background}
\label{sec:background}
\subsection{Juggling patterns and juggling variety}
First we lay down some notation: 
\begin{itemize}
    \item for $n \le m \in \bN^+$, we will write $[n]$ for the set $\{1, 2, \dots, n\}$, $[n,m]$ for $\{n, n+1, \dots, m\}$ and $\binom{[m]}{n}$ for the set of subsets of $[m]$ with cardinality $n$;
    \item $e_1, e_2, \dots, e_n$ denote the elements of the standard basis for $\bC^n$. Given a subset $I$ of $[n]$, the coordinate subspace $V_I$ is defined as $\Span \{e_i \, \vert \, i \in I\}$;
    \item if $g$ is an invertible matrix, $g^{-t}$ will be notation for $(g^{-1})^t$;
    \item the ring of integers modulo $m$ will be denoted by $\bZ_m$.
\end{itemize}
We denote with $\Delta_n$ the equioriented quiver of type $\widetilde{A}$ on $n$ vertices.

\begin{center}
    \begin{tikzpicture}
     \begin{scope}[every node/.style={circle, draw=black!100, %fill=black!100,
     very thin,minimum size=1mm}]
    \node (0) at (0,2.2) {};
    \node (1) at (2,1) {};
    \node (2) at (2,-1) {};
    \node (3) at (0,-2.2) {};
    \node (4) at (-2,-1) {};
    \node (5) at (-2,1) {};
\end{scope}

\begin{scope}[every edge/.style= 
              {draw=black,thick}]
\path [->] (0) edge[bend left=20] (1);
\path [->] (1) edge[bend left=20] (2);
\path [->] (2) edge[bend left=20] (3);
\path [->] (3) edge[bend left=20] (4);
\path [->] (4) edge[bend left=20] (5);
\path [->] (5) edge[bend left=20] (0);
\end{scope}
    \end{tikzpicture}
     
    The quiver $\Delta_6$.
    \end{center}
    We label its vertices with the integers modulo $n$, so that the arrows are of the form $i \longrightarrow i+1$.

\begin{definition}
    Let $k \le n$ be natural numbers; we define the projective variety
    \[X(k,n) \coloneqq \Big\{V = (V_i)_{i \in \bZ_n} \in \prod_{i \in \bZ_n} Gr(k,\bC^n) \, \vert \, \tau_1(V_i) \subseteq V_{i+1} \, \forall \, i \in \bZ_n\Big\}\]
    where the endomorphism $\tau_1$ of $\mathbb{C}^{n}$ is given by $\tau_1(e_i) = e_{i+1}$ for $i<n$ and $\tau_1(e_n) =0$.
\end{definition}
  Notice that 
  $X(k,n)$ is a quiver Grassmannian for $\Delta_n$ %the equioriented cycle 
  (see \cite{FLP22} for more detail): consider the nilpotent $\Delta_n$-representation $U_{[n]}$ which has $\bC^n$ over each vertex and $\tau_1$ over each arrow. If we denote by $\ubar{k} \in \bN^n$ the dimension vector whose entries are all equal to  $k\in \bN$, we have that $X(k,n)$ is the locus of subrepresentations of $U_{[n]}$ of dimension vector $\ubar{k}$, i.e.
\[X(k,n) = Gr(\ubar{k}, U_{[n]}) \, .\]
%These quiver Grassmannians were studied in \cite{FLP22}. More detail on the reprsentation theory of the equioriented cycle and on quiver Grassmannians for the equioriented cycle can be found in \cite{Pue2022}.

\begin{center}
    \begin{tikzpicture}
     \begin{scope}[every node/.style={circle,
     very thin,minimum size=1mm}]
    \node (0) at (0,2.2) {$\bC^6$};
    \node (1) at (2,1) {$\bC^6$};
    \node (2) at (2,-1) {$\bC^6$};
    \node (3) at (0,-2.2) {$\bC^6$};
    \node (4) at (-2,-1) {$\bC^6$};
    \node (5) at (-2,1) {$\bC^6$};
\end{scope}

\begin{scope}[every edge/.style= 
              {draw=black,thick}]
\path [->] (0) edge[bend left=20, "$\tau_1$"] (1);
\path [->] (1) edge[bend left=20, "$\tau_1$"] (2);
\path [->] (2) edge[bend left=20, "$\tau_1$"] (3);
\path [->] (3) edge[bend left=20, "$\tau_1$"] (4);
\path [->] (4) edge[bend left=20, "$\tau_1$"] (5);
\path [->] (5) edge[bend left=20, "$\tau_1$"] (0);
\end{scope}
    \end{tikzpicture}
    
    The representation $U_{[6]}$.
    \end{center}

\begin{definition}
    Let $k \le n$ be natural numbers. A $(k,n)$-juggling pattern is a collection $\J = (J_i)_{i \in \bZ_n}$ of cardinality $k$ subsets of $[n]$, such that $j \in J_i$ implies $j+1 \in J_{i+1}$, for all $i \in \bZ_n$ and  all $j \in [n-1]$. The set of $(k,n)$-juggling patterns is denoted by $JP(k,n)$.
\end{definition}

\begin{rem}\label{rem:coordinate-sub-reps}
    Observe that for each $(k,n)$-juggling pattern $\J = (J_i)_i$, the collection of coordinate vector spaces $p_\J \coloneqq (V_{J_i})_i$ is a point in $X(k,n)$.
\end{rem}

In \cite{FLP22} a different, but isomorphic, $\Delta_n$-representation is chosen: the map on the arrow $i \longrightarrow i+1$ is the projection along the $i$-th standard basis vector. The corresponding quiver Grassmannians are again isomorphic to ours, and the combinatorial gadgets resulting from this representation are called $(k,n)$-Grassmann necklaces. One can produce a juggling pattern from a Grassmann necklace and viceversa \cite[2.2]{FLP22}, and all results we recall that use one family of objects still hold for the other. The $(k,n)$-juggling patterns admit certain $\omega$-generalizations (\cite{FLP23a}) which do not naturally exist for Grassmann necklaces. With this generalization of the symplectic setting in mind we decided to work with juggling patterns instead of Grassmann necklaces.

\medskip

For all $k \in [0,n]$, the set $JP(k,n)$ can be equipped with the following partial order: $\J \le \J'$ if and only if $J_i \ge J'_i$ for all $i \in \bZ/n\bZ$, where two sets $A,B \in \binom{[n]}{k}$ satisfy $A \le B$ if, when written in increasing order $A = \{a_1 < a_2 < \dots < a_k\}$, $B = \{b_1 < b_2 < \dots < b_k\}$, one has $a_i \le b_i$ for all $i \in [k]$.

\noindent Let $G \coloneqq \Aut_{\Delta_n}(U_{[n]})$ be the automorphism group of the $\Delta_n$-representation $U_{[n]}$. Then $G$ acts on all quiver Grassmannians for $U_{[n]}$, in particular on $X(k,n)$. Each $G$-orbit in $X(k,n)$ contains exactly one point $p_\J$ labeled by a juggling pattern $\J$ as in Remark~\ref{rem:coordinate-sub-reps}. Such an orbit is an affine cell by \cite[Theorem~1]{FLP22} and we denote it by $C_\J$. Thus the closure inclusion order on the set of $G$-orbits induces a partial order on $JP(k,n)$, which coincides with the combinatorial order described above \cite[Corollary~4.7]{FLP23a}.

\subsection{Symplectic conditions on the juggling variety}
To introduce symplectic conditions on such varieties we need the dimension of the vector spaces to be even, so we work in $X(k,2n)$. We equip $\bC^{2n}$ with the symplectic form
\[(v,w) = \sum_{i=1}^n (-1)^{i+1} \cdot v_i \cdot w_{2n-i+1} \, \, ,\]
where $v=\sum_{i=1}^n v_i e_i$ and $w=\sum_{i=1}^n w_ie_i$.
In other words, this is the symplectic form whose Gram matrix $\Omega$ in the standard basis has zeros everywhere except on the antidiagonal, where it has alternating 1s and $-1$s, with a 1 on the upper right corner.
\[\Omega = \begin{pmatrix}
    0 & 0 & \cdots & 0 & 1 \\
    0 & 0 & \cdots & -1 & 0 \\
    \vdots & \vdots & \addots & \vdots & \vdots \\
    0 & 1 & \cdots & 0 & 0 \\
    -1 & 0 & \cdots & 0 & 0
\end{pmatrix}\]
When $n$ is fixed, we write $\Tilde{i}$ for $2n-i+1$ for brevity. This way we have $(e_i, e_j) = (-1)^{\Tilde{i}} \delta_{\Tilde{i}j}$.

\begin{prop}\label{tau}
    Given $k, n$ natural numbers with $0 \le k \le 2n$, the map
    \begin{align*}
        \sigma \colon X(k,2n) &\longrightarrow X(2n-k,2n) \\
        (V_i)_i &\longmapsto (V_{-i}^\perp)_i
    \end{align*}
    is well defined.
\end{prop}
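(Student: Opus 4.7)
The plan is to verify two things: first, that each $V_{-i}^\perp$ has dimension $2n-k$ so the tuple lands in the product of the correct Grassmannians, and second, that the chain condition $\tau_1(V_{-i}^\perp) \subseteq V_{-(i+1)}^\perp$ holds. The first is immediate from the nondegeneracy of the symplectic form, since $\dim V_{-i} = k$ forces $\dim V_{-i}^\perp = 2n - k$.

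For the chain condition, the key lemma I would establish is a skew-adjointness identity for $\tau_1$ with respect to the symplectic form:
\[
(\tau_1 v, w) + (v, \tau_1 w) = 0 \qquad \text{for all } v, w \in \bC^{2n}.
\]
I would check this on standard basis vectors $e_i, e_j$ by direct computation. For $i, j < 2n$, both $(\tau_1 e_i, e_j) = (e_{i+1}, e_j)$ and $(e_i, \tau_1 e_j) = (e_i, e_{j+1})$ vanish unless $j = 2n - i$, in which case the two terms are $(-1)^{i+2}$ and $(-1)^{i+1}$ and cancel. The remaining "boundary" cases where $i = 2n$ or $j = 2n$ require separate verification: one term vanishes because $\tau_1(e_{2n}) = 0$, and I would check that the other vanishes too, because the nonzero entries of $\Omega$ live on the antidiagonal and $\tau_1$ would have to shift an index off the range $[1, 2n]$ to pair with the remaining vector. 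This bookkeeping near the kernel of $\tau_1$ is the only mildly delicate point, though it is entirely elementary.

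Granted skew-adjointness, the chain condition is then a one-line argument. Fix $i \in \bZ_{2n}$. Because $V \in X(k,2n)$, we have $\tau_1(V_{-i-1}) \subseteq V_{-i}$. Now pick $v \in V_{-i}^\perp$ and any $w \in V_{-i-1}$. Then $\tau_1 w \in V_{-i}$, so $(v, \tau_1 w) = 0$, and skew-adjointness gives $(\tau_1 v, w) = -(v, \tau_1 w) = 0$. Since $w \in V_{-i-1}$ was arbitrary, $\tau_1 v \in V_{-i-1}^\perp$, which is exactly $\tau_1(V_{-i}^\perp) \subseteq V_{-(i+1)}^\perp$, as required.

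The main (minor) obstacle is thus verifying skew-adjointness in the boundary cases involving $e_{2n}$; once that is in hand, the proposition is a short deduction. No additional input is needed, and the same argument will make clear that $\sigma$ in fact interchanges $X(k,2n)$ and $X(2n-k,2n)$ as a morphism of varieties, should that be useful later.
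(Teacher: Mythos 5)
Your proof is correct and is essentially the same as the paper's: the paper's key matrix identity $\Omega\tau_1^t\Omega = \tau_1$ (using $\Omega^{-1}=-\Omega$) is precisely the skew-adjointness $(\tau_1 v,w)=-(v,\tau_1 w)$ that you establish, and the rest is the same deduction from $\tau_1(V_{-i-1})\subseteq V_{-i}$. The only cosmetic difference is that you argue directly with vectors pairing to zero, whereas the paper invokes a general inclusion $\Omega M^t\Omega\bigl((MW)^\perp\bigr)\subseteq W^\perp$ and then specializes $M=\tau_1$; your version is slightly more self-contained.
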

For $V \in X(k,n)$ and $W \in X(\ell,n)$ with $k \le \ell$, we write $V \subseteq W$ whenever $V_i \subseteq W_i$ for all $i \in \bZ_n$.
\begin{proof}[Proof of Proposition~\ref{tau}]
    To start with, observe that $-\Omega = \Omega^t = \Omega^{-1}$. Moreover, notice  that, as matrices in the standard basis, $-\Omega \cdot \tau_1^t \cdot \Omega^{-1} = \Omega \tau_1^t \Omega = \tau_1$. From linear algebra we obtain: given $W \subseteq \bC^{2n}$ a subspace and $M$ a $2n \times 2n$ matrix, one has $\Omega M^t \Omega \bigl( (M \cdot W)^\perp \bigr) \subseteq W^\perp$. Now if $V = (V_i)_i$ is a point in $X(k,2n)$, its vector spaces satisfy $\tau_1 \cdot V_i \subseteq V_{i+1}$ for all $i$. Taking the orthogonal subspaces we get $V_{i+1}^\perp \subseteq (\tau_1 \cdot V_i)^\perp$, then applying $\tau_1$ to both sides we find
    \[ \tau_1( V_{i+1}^\perp ) \subseteq  \tau_1 \bigl( ( \tau_1\cdot V_i )^\perp \bigr) = \Omega \tau_1^t \Omega \bigl( ( \tau_1\cdot V_i )^\perp \bigr) \subseteq V^\perp_{i} \, \, .\]
    That is, $\tau_1(\sigma V)_{-i-1}\subseteq (\sigma V)_{-i}$ for any $i\in\mathbb{Z}_{2n}$ and hence we obtain
    $\sigma V \in X(2n-k,2n)$.
\end{proof}

\begin{rem}
    The composition
    \[X(k,2n) \overset{\sigma}{\longrightarrow} X(2n-k,2n) \overset{\sigma}{\longrightarrow} X(k,2n)\]
    is the identity map.
\end{rem}
\subsection{The main object}
Now we introduce the main object of this paper:
\begin{definition}
    Let $k \le n$. Then
    \[X(k,2n)^{sp} \coloneqq \{V=(V_i)_i \in X(k,2n) \, \vert \, V \subseteq \sigma V\} \, .\]
    This is the subvariety of isotropic, or symplectic, points in $X(k,2n)$.
\end{definition}

\begin{rem}\label{iso-coiso}
    If instead $n \le k \le 2n$, we can define coisotropic points in $X(k,2n)$ as those that satisfy $V \supseteq \sigma V$. Then the corresponding subvariety is isomorphic to $X(2n-k,2n)^{sp}$ via $\sigma$.
\end{rem}

  For a subset $I \in \binom{[2n]}{k}$, let $RI \coloneqq [2n]$ \textbackslash $\{\tilde{i} \, \vert \, i \in I\} \in \binom{[2n]}{2n-k}$. Notice that $V_{RI}$, the coordinate subspace corresponding to $RI$, coincides with $V_I^\perp$. If $I \subseteq RI$ we say that $I$ is isotropic, or coisotropic if $I \supseteq RI$; in either case we say that $I$ is symplectic.

  Next, we extend $R \colon \binom{[2n]}{k} \longrightarrow \binom{[2n]}{2n-k}$ to the poset of juggling patterns: for $\J=(J_i)\in JP(k,2n)$, we define $R\J$ as the tuple of sets $(R\J)_i = R(J_{-i})$; this is a $(2n-k,2n)$-juggling pattern, since $\sigma(p_\J) \in X(2n-k,2n)$ and it has a coordinate vector space $V_{R(J_{-i})} = V_{(R\J)_i}$ on vertex $i$. Once again, $R$ is a bijection and $R(R\J) = \J$. Given two juggling patterns $\J \in JP(k,n)$ and $\J' \in JP(\ell,n)$ with $k \le \ell$, we write $\J \subseteq \J'$ if $p_\J \subseteq p_{\J'}$, that is, if $J_i \subseteq J'_i$ for all $i$.

\begin{definition}
    Let $k \le n$; a $(k,2n)$-juggling pattern $\J$ is isotropic, or symplectic, if $p_\J \in X(k,2n)^{sp}$ or, equivalently, if $\J \subseteq R\J$. By $JP(k,2n)^{sp}$ we denote the set of $(k,2n)$-symplectic juggling patterns.
\end{definition}

\begin{example}
    Let $k=n=2$.
    
    \noindent Then $\mathcal{J}=(J_0=\{3,4\},J_1=\{2,4\}, J_2=\{2,3\}, J_3=\{3,4\})$ is a $(2,4)$-juggling pattern, but it is not symplectic, as $J_2\not\subseteq RJ_2=\{1,4\}$. Instead, $\mathcal{J}'=(J'_0=\{3,4\},J'_1=\{2,4\}, J'_2=\{3,4\}, J'_3=\{2,4\})\in JP(2,4)^{sp}$.
\end{example}

%\begin{prop}(this is now a corollary in section 3)
%     The map $R \colon JP(k,2n) \longrightarrow JP(2n-k,2n)$ is order preserving.
%\end{prop}

\begin{rem}
    A $(k,2n)$-juggling pattern $\J$ is maximal in the partial order if for all $i$, $2n \in J_i$ implies $1 \in J_{i+1}$ \cite[Remark 4.12]{FLP22}. Then a maximal $\J$ is symplectic if and only if $J_0$ is symplectic, or equivalently if and only if $J_n$ is symplectic. 
\end{rem}

\begin{prop}\label{k1case}
    For any $n \ge 1$, $X(1,2n)^{sp}= X(1,2n)$.
\end{prop}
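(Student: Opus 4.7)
Since $X(1,2n)^{sp}$ is by definition the subvariety $\{V \in X(1,2n) : V \subseteq \sigma V\}$ of $X(1,2n)$, it suffices to prove the reverse inclusion: every $V = (V_i) \in X(1,2n)$ satisfies $V \subseteq \sigma V$. In the rank-one case this amounts to $(v_i, v_{-i}) = 0$ for all $i \in \bZ_{2n}$ and any choice of generators $v_i$ of the lines $V_i$; the cases $i = 0$ and $i = n$ are automatic since the form is skew-symmetric.

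I would parametrize $V$ via its reset positions $R := \{r \in \bZ_{2n} : V_r = \langle e_{2n}\rangle\}$. The complement of $R$ decomposes cyclically into arcs whose lengths $L_s$ satisfy $\sum_s L_s = 2n$. In the gap between two consecutive resets $r_s$ and $r_{s+1} = r_s + L_s$ one freely picks $v_{r_s+1} = e_{2n - L_s + 1} + \sum_{j = 2n - L_s + 2}^{2n} a_{s,j}\, e_j$, and then $v_{r_s + t} := \tau_1^{t-1}(v_{r_s+1})$ for $t = 1, \dots, L_s$. In particular $V_i$ has leading index $m_i = 2n - L_s + t$ when $i = r_s + t$, and its tail coefficients are explicit polynomials in the $a_{s,j}$.

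Using $(e_j, e_k) = (-1)^{\tilde j}\delta_{\tilde j, k}$, the pairing $(v_i, v_{-i})$ is a finite sum over $j \in [m_i,\, 2n+1-m_{-i}]$, hence vanishes trivially when $m_i + m_{-i} > 2n + 1$. I then establish the following dichotomy. If $i$ and $-i$ lie in different gaps $s \neq s'$, the bound $L_s + L_{s'} \leq 2n$ combined with $t, t' \geq 1$ forces $m_i + m_{-i} \geq 2n + 2$ and the sum is empty. Otherwise $i$ and $-i$ lie in the same gap $s$; the relation $i + (-i) \equiv 0 \pmod{2n}$ then yields $t + t' \equiv -2 i_s \pmod{2n}$, which is even because $2n$ is even, so the number of summands $r := 2n + 2 - m_i - m_{-i}$ is itself even.

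In this last case, reindexing by $k = j - m_i$ turns the pairing into an overall nonzero sign times $\bigl(1 + (-1)^{r-1}\bigr)\alpha_{r-1} + \sum_{k=1}^{r-2}(-1)^k \alpha_k\,\alpha_{r-1-k}$, where $\alpha_l := a_{s,\, 2n - L_s + 1 + l}$ are the free parameters of the gap. Because $r$ is even the boundary term cancels, and the interior sum is killed by the fixed-point-free involution $k \mapsto r-1-k$, which sends each summand to its negative. The main obstacle is precisely this same-gap computation: one must identify the antisymmetric structure of the pairing and match it against the parity constraint $t + t' \in 2\bZ$ imposed by the evenness of $2n$. The different-gap case and the empty-sum reduction are immediate from the gap-length bound.
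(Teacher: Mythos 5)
Your proof is correct, and it takes a genuinely different (more direct) route than the paper's. The paper proves the statement in two steps: it first checks combinatorially that every $(1,2n)$-juggling pattern is isotropic, and then computes the pairing only for points in a maximal cell (which in your language corresponds to a single arc of length $2n$), concluding by observing that the union of maximal cells is dense and that $X(1,2n)^{sp}$ is closed. You instead show directly that the pairing $(v_i,v_{-i})$ vanishes for \emph{every} point of \emph{every} cell, by parametrizing an arbitrary cell through its reset positions and arcs. This produces an additional case that does not arise in the paper (the two indices $i$ and $-i$ lying in distinct arcs, dispatched by your gap-length bound $L_s+L_{s'}\le 2n$), while your same-arc computation is a generalization of the paper's single-arc calculation, reducing to the same parity-plus-fixed-point-free-involution argument (the evenness of $t+t'$ forced by $2n$ being even, which makes the alternating sum telescope to zero). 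The trade-off is clear: the paper's argument is shorter because it only needs the top cells plus a topological closure step, while yours is entirely elementary and self-contained, with no appeal to density or closedness. Two minor points to tighten if you write this up: you should note explicitly that the nilpotency $\tau_1^{2n}=0$ forces the set of reset positions to be nonempty (so your parametrization is exhaustive), and you write ``$-2i_s$'' where you mean ``$-2r_s$''.
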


\begin{proof}
First we prove that any $(1,2n)$-juggling pattern is isotropic: assume there exists a non-symplectic $(1,2n)$-juggling pattern $\J$, with $J_i = \{x\}$ and $J_{-i} = \{\Tilde{x}\} = \{2n-x+1\}$ for some $x \in [2n]$ and $i \in \bZ_{2n}$. Let $m$ be the number of arrows on the minimal path from $-i$ to $i$, which is equivalent to $2i$ modulo $2n$ and therefore even. Since $x$ and $\Tilde{x}$ have different parity, we see that both between $i$ and $-i-1$ and between $-i$ and $i-1$ there must be a vertex $a$ such that $J_a = \{2n\}$; this means $x+m > 2n$ and $\Tilde{x} + 2n-m > 2n$. These two inequalities are incompatible, thus we have found a contradiction.

Next we show that if $\J$ is a maximal $(1,2n)$-juggling pattern, then the whole orbit $C_\J = G \cdot p_\J$ consists of symplectic points: let $i$ be the vertex with $J_i = \{1\}$, so we have $J_{i+m} = \{i+m\}$ for $m \in [0,2n-1]$. Given a point $V$ in the orbit $C_\J$, there exist coefficients $g_1, \dots, g_{2n}$ such that
\[V_{i+m} = \Span_{\bC} (\underbrace{0, \dots, 0}_m, g_1, g_2, \dots, g_{2n-m})\]
 with $g_1 \ne 0$ (see \cite[Theorem~3.10]{FLP22} and \cite[Theorem~4.13]{Pue2022}). %The number of zeros in the generator is $m$. 
  Now let $m'$ be the number in $[0,2n-1]$ such that $i+m \equiv_{2n} -(i+m')$; observe that if $m+m' \ge 2n$, the symplectic form between the generators of $V_{i+m}$ and $V_{i+m'}$ trivially vanishes, so we can assume $m'+1 \le 2n-m$. We compute the product and find
 \[\sum_{s=m'+1}^{2n-m} (-1)^{s+1} g_{s-m'} \cdot g_{2n-m+1-s} \, .\]
There is an even number of summands since $m$ and $m'$ have the same parity, and hence $m'+1$ and $2n-m$ have different parity. Moreover, every summand appears twice with opposite signs, so that the sum is zero. We conclude the proof by observing that the union of all maximal cells is dense in $X(1,2n)$ and that $X(1,2n)^{sp}$ is a closed subvariety of $X(1,2n)$.
\end{proof}
\subsection{Interpretation as degeneration of the isotropic Grassmannian}\label{sec:degeneration}
  The construction of $X(k,2n)^{sp}$ mimics the definition of the classical isotropic Grassmannian $Gr(k,2n)^{sp}$, that is, the projective variety of isotropic subspaces of $\bC^{2n}$. Recall from \cite{FLP22} that $X(k,n)$ is a degeneration of $Gr(k,n)$: for $z \in \bC$ define $U_{[n]}(z)$ as the $\Delta_n$-representation with $\bC^n$ on every vertex and $\tau_{1,z}$ on every map, where the matrix of $\tau_{1,z}$ in the standard basis is
\[\tau_{1,z} = \begin{pmatrix}
    0 & 0 & \cdots & 0 & z \\
    1 & 0 & \cdots & 0 & 0 \\
    \vdots & \ddots & \ddots & \vdots & \vdots \\
    0 & 0 & \ddots & 0 & 0 \\
    0 & 0 & \cdots & 1 & 0
\end{pmatrix}.\]
Then $Gr(k,n)$ is isomorphic to $Gr(\underline{k}, U_{[n]}(z))$ for all $z \ne 0$ (for example by taking the vector space on vertex 0). Since $\tau_{1,0} = \tau_{1}$, we get $U_{[n]}(0) = U_{[n]}$, so these varieties form a family $Y$ equipped with a morphism $\pi \colon Y \longrightarrow \bC$. The fibers over nonzero numbers are all isomorphic to the classical Grassmannian, and the fiber over 0 is $X(k,n)$. Now, we replace $n$ by $2n$ and take $k \le n$, in order to realize $X(k,2n)^{sp}$ as a degeneration of $Gr(k,2n)^{sp}$. Observe that $\Omega \tau_{1,z}^t \Omega = \tau_{1,z}$, so we can define a map
\[\sigma_z \colon Gr(\underline{k}, U_{[2n]}(z)) \longrightarrow Gr(\underline{2n-k}, U_{[2n]}(z))\]
as in Proposition~\ref{tau}. For every fiber $\pi^{-1}(z)$, we consider points that satisfy $V \subseteq \sigma_z (V)$ . This condition is linear with $z \in \bC$ and the fibers over nonzero numbers are isomorphic to the isotropic Grassmannian. Thus we have a subvariety $Y^{sp}$ of $Y$ such that $X(k,2n)^{sp}$ is the desired degeneration. Lastly we observe that $G = \Aut_{\Delta_n}(U_{[n]})$ is a degeneration of $GL_n$ if we see it as the special fiber of the family over $\mathbb{C}$ whose fiber over $z\in\mathbb{C}$ is the algebraic group $\Aut_{\Delta_n}(U_{[n]}(z))$.

\begin{example}\label{exple: symplectic 24}
    We compute the poset of symplectic $(2,4)$-juggling patterns, with the combinatorial order inherited by $JP(2,4)$. We write $ij$ for the set $\{i,j\}$, and we describe a juggling pattern $\J$ in the following way:
    \[\begin{matrix}
        &J_0& \\
        J_3&&J_1 \\
        &J_2&
    \end{matrix}\]

      Out of the 33 juggling patterns from $JP(2,4)$, only these 13 are symplectic:
    
    \begin{gather*}
        \begin{vmatrix}
        &12& \\
        14&&23 \\
        &34&
    \end{vmatrix} \begin{vmatrix}
        &24& \\
        13&&13 \\
        &24&
    \end{vmatrix} \begin{vmatrix}
        &13& \\
        24&&24 \\
        &13&
    \end{vmatrix} \begin{vmatrix}
        &34& \\
        23&&14 \\
        &12&
    \end{vmatrix} \\ \hline
    \begin{vmatrix}
        &24& \\
        14&&23 \\
        &34&
    \end{vmatrix} \begin{vmatrix}
        &13& \\
        24&&24 \\
        &34&
    \end{vmatrix} \begin{vmatrix}
        &24& \\
        34&&34 \\
        &24&
    \end{vmatrix} \begin{vmatrix}
        &34& \\
        24&&24 \\
        &13&
    \end{vmatrix} \begin{vmatrix}
        &34& \\
        23&&14 \\
        &24&
    \end{vmatrix} \\ \hline
    \begin{vmatrix}
        &24& \\
        34&&34 \\
        &34&
    \end{vmatrix} \begin{vmatrix}
        &34& \\
        24&&24 \\
        &34&
    \end{vmatrix} \begin{vmatrix}
        &34& \\
        34&&34 \\
        &24&
    \end{vmatrix} \\ \hline
    \begin{vmatrix}
        &34& \\
        34&&34 \\
        &34&
    \end{vmatrix}
    \end{gather*}

The Hasse diagram of $JP(2,4)^{sp}$ is the following (the minimal vertex is on the bottom):
    
    \begin{center}
    \begin{tikzpicture}
     \begin{scope}[every node/.style={circle, draw=black!100, fill=black!100,
     very thin,minimum size=1mm}]
    \node (1) at (0,0) {};
    \node (2) at (-2,2) {};
    \node (3) at (0,2) {};
    \node (4) at (2,2) {};
    \node (5) at (-4,4) {};
    \node (6) at (-2,4) {};
    \node (7) at (0,4) {};
    \node (8) at (2,4) {};
    \node (9) at (4,4) {};
    \node (10) at (-3,6) {};
    \node (11) at (-1,6) {};
    \node (12) at (1,6) {};
    \node (13) at (3,6) {};
\end{scope}

\begin{scope}[every edge/.style= 
              {draw=black,thick}]
\path [-] (1) edge[] (2);
\path [-] (1) edge[] (3);
\path [-] (1) edge[] (4);
\path [-] (2) edge[] (5);
\path [-] (2) edge[] (6);
\path [-] (2) edge[] (7);
\path [-] (3) edge[] (5);
\path [-] (3) edge[] (6);
\path [-] (3) edge[] (8);
\path [-] (3) edge[] (9);
\path [-] (4) edge[] (7);
\path [-] (4) edge[] (8);
\path [-] (4) edge[] (9);
\path [-] (5) edge[] (10);
\path [-] (5) edge[] (11);
\path [-] (6) edge[] (10);
\path [-] (6) edge[] (12);
\path [-] (7) edge[] (12);
\path [-] (7) edge[] (11);
\path [-] (8) edge[] (12);
\path [-] (8) edge[] (13);
\path [-] (9) edge[] (11);
\path [-] (9) edge[] (13);
\end{scope}
    \end{tikzpicture}
    \end{center}
\end{example}

\section{Symplectic Automorphisms}    
\label{sec:automorphisms}
  Let us now discuss the largest subgroup of $G = \Aut_{\Delta_{2n}}(U_{[2n]})$ which preserves the symplectic form. 

\begin{definition}\label{def:symp-automorphism}
    An automorphism $A=(A_i)_{i\in\bZ_{2n}} \in G$ is symplectic if
    \begin{equation}\label{preservesform}
        (A_i(v), A_{-i}(w)) = (v,w)
    \end{equation}
    for all $i \in \bZ_{2n}$, $v \in U_{[2n]}^{(i)}$ and $w \in U_{[2n]}^{(-i)}$. The subgroup of such elements is denoted by $G^{sp}$.
\end{definition}

  Recall that the choice of $\Omega$ corresponds to the choice of an involutive (non trivial) automorphism of $GL_{2n}$, given by:
\[g \longmapsto - \Omega \cdot g^{-t} \cdot \Omega,\]
whose fixed-point subgroup is $Sp_{2n}$. We provide $G$ with a similar automorphism $\sigma_G: G\rightarrow G$: for any $A=(A_i)_{i\in\bZ_{2n} }\in G$ we define
\begin{equation}\label{eqn: involution on automorphism gp}
    \sigma_G(A) \coloneqq \bigl(-\Omega \cdot (A_{-i}^{-t}) \cdot \Omega \bigr)_i \, .
\end{equation}
Observe that two matrices $A$ and $B$ in $GL_{2n}$ with $(A(v), B(w)) = (v,w)$ for all $v,w \in \bC^{2n}$ must satisfy
\[B = -\Omega \cdot (A^{-t}) \cdot \Omega.\]
Hence the $\sigma_G$-fixed subgroup of $G$ is precisely $G^{sp}$. 

\medskip

Recall the morphism $\sigma: X(k,2n)\rightarrow X(2n-k,2n)$ defined in Proposition \ref{tau}.

\begin{lem}\label{sigmas}
The equality
\[\sigma_G(A) \cdot V = \sigma \bigl( A \cdot (\sigma V)\bigr)\]
holds for all $V \in X(k,2n)$ and $A \in G$.
\end{lem}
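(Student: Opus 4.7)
The plan is to verify the claimed equality vertex by vertex. Fix $i\in\bZ_{2n}$. By the definition of $\sigma_G$ in \eqref{eqn: involution on automorphism gp}, the $i$-th component of the left-hand side is
\[(\sigma_G(A)\cdot V)_i \;=\; (-\Omega\,A_{-i}^{-t}\,\Omega)\cdot V_i.\]
Unwinding the right-hand side, the identity $(\sigma V)_j=V_{-j}^{\perp}$ gives $(A\cdot\sigma V)_j = A_j V_{-j}^{\perp}$, so the $i$-th component of $\sigma(A\cdot\sigma V)$ is $\bigl(A_{-i}\cdot V_i^{\perp}\bigr)^{\perp}$. Hence the lemma reduces to showing, for any invertible matrix $M$ and any subspace $W\subseteq\bC^{2n}$, the identity
\[-\Omega\, M^{-t}\,\Omega\cdot W \;=\; \bigl(M\cdot W^{\perp}\bigr)^{\perp}.\]

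I would handle this linear-algebraic identity---essentially already visible in the proof of Proposition~\ref{tau}---by introducing the adjoint $M^{*}$ of $M$ with respect to $(\cdot,\cdot)$, characterized by $(Mv,w)=(v,M^{*}w)$ for all $v,w\in\bC^{2n}$. The relations $\Omega^{t}=-\Omega=\Omega^{-1}$ (which in particular give $\Omega^{2}=-I$) force $M^{*}=-\Omega\, M^{t}\,\Omega$, and a short direct computation then yields $(M^{*})^{-1}=-\Omega\, M^{-t}\,\Omega$. Combined with the standard adjunction
\[\bigl(M\cdot W^{\perp}\bigr)^{\perp} \;=\; (M^{*})^{-1}\bigl(W^{\perp\perp}\bigr) \;=\; (M^{*})^{-1}\cdot W,\]
where $W^{\perp\perp}=W$ follows from nondegeneracy of the form, this produces the required equality and finishes the proof.

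The main obstacle---such as it is---is purely notational: keeping track of the interplay between the signs coming from $\Omega^{t}=-\Omega$, the transposes introduced by passing to the adjoint, and the $-i$ versus $i$ shift appearing in the definitions of both $\sigma$ and $\sigma_G$. No conceptually new ingredient beyond these sign/transpose manipulations is needed.
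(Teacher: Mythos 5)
Your proposal is correct and follows essentially the same route as the paper: both reduce the claim componentwise to the linear-algebra identity $(M\cdot W^\perp)^\perp = -\Omega M^{-t}\Omega\, W$, which the paper obtains directly from the inclusion established in Proposition~\ref{tau} and you obtain by packaging the same computation through the adjoint $M^{*}=-\Omega M^{t}\Omega$. The adjoint framing is a harmless cosmetic repackaging of the same argument.
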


\begin{proof}

  Since $\bigl(A \cdot (\sigma V)\bigr)_i = A_i \cdot (V_{-i}^\perp)$ we have
\[
    \Bigl(\sigma \bigl(A \cdot (\sigma V)\bigr) \Bigr)_i = \Bigl(  A_{-i} \cdot (V_i^\perp) \Bigr)^\perp.
\]
As in the proof of Proposition~\ref{tau}, we use the fact that given a matrix $M$ and a subspace $W$, there is an inclusion $\Omega M^t \Omega \bigl( (M \cdot W)^\perp \bigr) \subseteq W^\perp$ which is an equality whenever $M$ is invertible. In this case the identity can be rewritten as $(M\cdot W)^\perp = \Omega M^{-t} \Omega (W^\perp)$. We obtain the claim by plugging in $M =A_{-i}$ and $W=V_i^\perp$ since $A$ consists of invertible matrices. 
\end{proof}

\begin{cor}
    The map $R \colon JP(k,2n) \longrightarrow JP(2n-k,2n)$ is order preserving.
\end{cor}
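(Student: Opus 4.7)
The plan is to deduce order-preservation of $R$ from Lemma \ref{sigmas}, by showing that the morphism $\sigma$ is $G$-equivariant with respect to the involution $\sigma_G$, and hence transports the orbit closure relation bijectively. Combined with the identification (recalled at the end of Section 2.1) of the combinatorial order on $JP(k,2n)$ with the closure order on $G$-orbits in $X(k,2n)$, this yields the claim.

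First I would observe that $\sigma : X(k,2n) \to X(2n-k,2n)$ is an isomorphism of projective varieties: it is visibly a morphism (pointwise orthogonal complement of subspaces together with reindexing), and the remark following Proposition~\ref{tau} shows $\sigma \circ \sigma = \id$. Next, applying Lemma~\ref{sigmas} with $V$ replaced by $\sigma V$ and using $\sigma \circ \sigma = \id$ yields the equivariance relation
\[
\sigma(A \cdot V) = \sigma_G(A) \cdot \sigma(V) \qquad \text{for all } A \in G, \ V \in X(k,2n).
\]
Since $\sigma_G$ is an involutive automorphism of $G$, in particular a bijection, this gives $\sigma(G \cdot p_\J) = G \cdot \sigma(p_\J)$. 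The definition of $R$ on coordinate subspaces (the identity $V_{R(J_{-i})} = V_{(R\J)_i}$ noted just before the definition of $JP(k,2n)^{sp}$) says precisely that $\sigma(p_\J) = p_{R\J}$, so $\sigma(C_\J) = C_{R\J}$.

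Being a homeomorphism, $\sigma$ sends orbit closures to orbit closures, so $C_\J \subseteq \overline{C_{\J'}}$ if and only if $C_{R\J} \subseteq \overline{C_{R\J'}}$. Translating closure inclusions back to the combinatorial order on juggling patterns, and using that $R$ is a bijection $JP(k,2n) \to JP(2n-k,2n)$ (with inverse $R$ itself), this reads $\J \le \J'$ if and only if $R\J \le R\J'$, so $R$ is indeed order-preserving, and in fact a poset isomorphism. No step looks especially hard; the main part is the one-line derivation of the equivariance relation from Lemma~\ref{sigmas} by substitution, after which everything else is bookkeeping about $G$-orbits and their closures.
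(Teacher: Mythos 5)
Your argument is essentially the paper's own proof: both use Lemma~\ref{sigmas} to show that $\sigma$ carries the cell $C_{\J'}$ isomorphically onto $C_{R\J'}$, and then translate the closure-inclusion characterization of the order on juggling patterns through the isomorphism $\sigma$. The only cosmetic difference is that you first isolate the equivariance identity $\sigma(A\cdot V)=\sigma_G(A)\cdot\sigma(V)$, whereas the paper performs the substitution inline (and also notes, somewhat redundantly, that $R$ sends the minimal pattern to the minimal pattern).
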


\begin{proof}
    Firstly, notice that $R$ sends the minimal $(k,2n)$-juggling pattern, whose sets are all equal to $\{2n-k+1, \dots, 2n\}$ to the minimal $(2n-k,2n)$-juggling pattern, which is constantly equal to $\{k+1, \dots, 2n\}$. Now let $V = A \cdot p_{\J'}$ for some $A \in G$ and $\J'$ some $(k,2n)$-juggling pattern. By Lemma~\ref{sigmas} we obtain,
    \[\sigma V = \sigma (A \cdot p_{\J'}) =\sigma(A\cdot \sigma(p_{R\J'}))= \sigma_G(A) \cdot p_{R\J'}.\]
    Hence $\sigma$ sends isomorphically the cell $C_{\J'}$ into the cell $C_{R\J'}$, since they are $G$-orbits. Recall that, for another juggling pattern $\J$, the condition $\J \le \J'$, is equivalent to $p_\J \in \overline{C_{\J'}}$. Since $\sigma$ is an isomorphism of varieties, we get $p_{R\J} \in \overline{C_{R\J'}}$
\end{proof}

\subsection{Explicit description of the symplectic automorphisms}
Let $A=(A_i)_{i\in\bZ_{2n}} \in G$.
For any $i\in\bZ_{2n}$, we denote by $\{e_1^{(i)}, \ldots, e_{2n}^{(i)}\}$ the standard basis of  $U_{[2n]}^{(i)}=\bC^{2n}$.
With respect to such a basis, each $A_i$ is a lower triangular matrix 
with nonzero diagonal entries and $A$ is completely determined by the the entries of the first column of each of the $A_i$'s   \cite[Proposition~4.5]{FLP22}. Let us denote the entries of the first column of $A_i$ by $a^{(i)}_j$ for $i\in\bZ_{2n}$ and $j\in [2n]$.

%Let $e_1,\dots,e_{2n}$ be the standard basis of the spaces of $U^{(i)}_{[2n]}$. The symplectic form is given by $(e_i,e_{2n+1-i})=(-1)^{i+1}$.

%Let ${\rm Aut}^{sp}(U_{[2n]})\subset {\rm Aut}(U_{[2n]})$ be a subgroup consisting of
%$A=(A_i)_{i\in\bZ_{2n}}$ such that 
%\[
%(A_i u_i, A_{-i}u_{-i}) = (u_i, u_{-i}) \text{ for all } i\in\bZ_{2n}, u_i\in U_{2n}^{(i)}, u_{-i}\in U_{2n}^{(-i)}. 
%\]
%Recall that the symplectic quiver Grassmannian $X(k,2n)^{sp}\subset X(k,2n)$ consists of subspaces $V_i \subset U_{[2n]}^{(i)}$ such that $(V_i,V_{-i})=0$. 
\begin{lem}\label{conditions} We have that
$A\in G^{sp}$ if and only if the following conditions hold true 
\begin{equation}
a^{(i)}_1 a^{(j)}_1 = 1, \quad  i,j\in \bZ_{2n}, \ i+j=1,\\ \label{symp_eq_1}
\end{equation}
\begin{gather}
\sum_{\ell=0}^{r-1} (-1)^{\ell} a_{1+\ell}^{(i)}\,a_{r-\ell}^{(r-i)} = 0\quad i\in\bZ_{2n}, \ r=2,\dots,2n.\label{symp_eq_2}
%a^{(i)}_1 a^{(r-i)}_r - a^{(i)}_2  a^{(r-i)}_{r-1} + \dots + (-1)^{r+1} a^{(i)}_r a^{(r-i)}_1 = 0,\ r=2,\dots,2n, i\in\bZ_{2n}. \label{symp_eq_1}
\end{gather}
\end{lem}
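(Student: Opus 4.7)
My plan is to reduce the condition $A\in G^{sp}$ to a system of bilinear equations in the entries $a^{(i)}_j$, and then to identify those equations with \eqref{symp_eq_1} and \eqref{symp_eq_2} through a reindexing. First, by the definition of $G^{sp}$ and the bilinearity of the symplectic form, $A\in G^{sp}$ holds if and only if
\[
(A_i e_s, A_{-i} e_t)=(e_s,e_t)\qquad\text{for all } i\in\bZ_{2n}\text{ and } s,t\in[2n].
\]
Combining \cite[Proposition~4.5]{FLP22} with the intertwining relation $\tau_1 A_i=A_{i+1}\tau_1$ yields the recursion $(A_{i+1})_{j,s}=(A_i)_{j-1,s-1}$; iterating, $(A_i)_{j,s}=a^{(i-s+1)}_{j-s+1}$ for $j\ge s$ (and $0$ otherwise), with upper indices read modulo $2n$.

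Next I would expand both sides using $(e_j,e_k)=(-1)^{\widetilde{j}}\delta_{\widetilde{j},k}$. Setting $m:=2n-s-t+1$ and substituting $\ell=j-s$, so that $\widetilde{j}=\widetilde{s}-\ell$, a direct calculation gives
\[
(A_i e_s, A_{-i} e_t)=(-1)^{\widetilde{s}}\sum_{\ell=0}^{m}(-1)^\ell\, a^{(i-s+1)}_{\ell+1}\, a^{(-i-t+1)}_{m+1-\ell},
\]
while the right-hand side equals $(-1)^{\widetilde{s}}\delta_{m,0}$. After dividing by the nonzero scalar $(-1)^{\widetilde{s}}$, and noting that for $m<0$ (equivalently $s+t>2n+1$) the sum is empty and both sides are $0$, the bilinear equation reduces to
\[
\sum_{\ell=0}^{m}(-1)^\ell\, a^{(i-s+1)}_{\ell+1}\, a^{(-i-t+1)}_{m+1-\ell}=\delta_{m,0}.
\]

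I would then perform the substitution $i':=i-s+1\in\bZ_{2n}$ and $r:=m+1\in[1,2n]$. Using $s+t=2n+2-r$ one checks that $-i-t+1\equiv r-i'\pmod{2n}$, so the second factor becomes $a^{(r-i')}_{r-\ell}$. The case $r=1$ (equivalently $m=0$) yields $a^{(i')}_1 a^{(1-i')}_1=1$, which is \eqref{symp_eq_1}, and the cases $r\in[2,2n]$ (equivalently $m\ge 1$) give exactly \eqref{symp_eq_2}. Conversely, given any target $(i',r)\in\bZ_{2n}\times[1,2n]$, choosing any $(s,t)\in[1,2n]^2$ with $s+t=2n+2-r$ and setting $i=i'+s-1$ recovers the corresponding bilinear form equation, so every equation listed in the statement is captured.

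The main obstacle is the bookkeeping: carefully tracking the Toeplitz-like shifts $(i,s)\mapsto i-s+1$ in the upper index and $(j,s)\mapsto j-s+1$ in the lower index of $a^{(i)}_j$, propagating the alternating sign from the basis evaluation of the symplectic form into the sign $(-1)^\ell$ appearing in \eqref{symp_eq_2}, and verifying that the substitution $(i,s,t)\mapsto(i',r)=(i-s+1,\,2n-s-t+2)$ gives a surjection onto $\bZ_{2n}\times[1,2n]$ while matching the summation range $\ell\in[0,m]$ (forced by the lower-triangular support of each $A_i$) with the bound $r-1$ appearing in \eqref{symp_eq_2}.
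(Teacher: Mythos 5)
Your proposal is correct and follows the same approach as the paper's (much terser) proof: reduce membership in $G^{sp}$ to the bilinear identities $(A_i e_s, A_{-i} e_t)=(e_s,e_t)$ on basis vectors, then expand using the Toeplitz structure $(A_i)_{j,s}=a^{(i-s+1)}_{j-s+1}$ from \cite[Proposition~4.5]{FLP22}. You have carried out the reindexing $(i,s,t)\mapsto(i',r)$ and sign bookkeeping that the paper leaves implicit, and I verified the key steps ($\widetilde{j}=\widetilde{s}-\ell$, $-i-t+1\equiv r-i'\pmod{2n}$, and the surjectivity of the reparametrization); everything checks out.
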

\begin{rem}
 The relation (\ref{symp_eq_2}) is trivial for $r=2i$. 
\end{rem}
\begin{proof}[Proof of Lemma~\ref{conditions}]
By Definition~\ref{def:symp-automorphism}, $A\in G^{sp}$ %${\rm Aut}^{sp}(U_{2n})$
if and only if
\[
(A_i e^{(i)}_{j_1},A_{-i} e^{(-i)}_{j_2})=(e^{(i)}_{j_1},e^{(-i)}_{j_2})
\]
holds for any $i\in \bZ_{2n}$, any $j_1,j_2\in [2n]$. Now the desired equations come from the explicit form of the matrices $A_i$ as in \cite[Proposition~4.5]{FLP22}.  
\end{proof}

%Let $\J=(J_i)_{i\in\bZ_{2n}}$, $J_i\in\binom{[2n]}{k}$ be a juggling pattern. Let $p_\J\in X(k,2n)$ be the corresponding torus fixed point and let $C_\J$ be the cell in $X(k,2n)$ containing $p_\J$.

\subsection{Lie algebra of the symplectic automorphism group}
  Now we compute the dimensions of $G^{sp}$ and of $X(k,2n)^{sp}$. Let $\fg \coloneqq \text{Lie}(G) = {\rm End}(U_{[2n]})$. These endomorphisms are explicitly described in \cite[Proposition~4.5]{FLP22}. 
  
Let $\bigl( x(a,b) \, \vert \, a \in [2n], b \in \bZ_{2n} \bigr)$ be the $\bC$-basis of $\fg$ acting as
\[
x(a,b) \bigl( e_{1+j}^{(b+j)} \bigr) = e_{a+j}^{(b+j)} \quad \text{for} \ j=0, \dots, 2n-a\]
and as zero when applied to any other basis vector. With respect to the basis $\left\{e^{(i)}_j\mid j\in [2n]\right\}_{i\in\bZ_{2n}}$, the operator $x(a,b)$ is the matrix tuple $(x(a,b)_{i})_{i\in\bZ_{2n}}$ whose $b+j$-th block ($j\in[0,2n-1] $) has $(s,t)$-entry equals to
\[
x(a,b)_{b+j, (s,t)}=
\begin{cases}
    1&\hbox{ if }s-j=a \hbox{ and }t-j=1,\\
    0&\hbox{ otherwise}.
\end{cases}
\]
by \cite[Proposition~4.5]{FLP22}. Notice that in particular the block $x(a,b)_{b+j}$ is the null matrix as soon as $a+j> 2n$.

\smallskip

Recall that the group $G$ is equipped with the automorphism $\sigma_G$
 from \eqref{eqn: involution on automorphism gp}. This induces a Lie algebra automorphism of $\fg$ of order 2:
\[
    \sigma_{\fg} (x_i) \coloneqq (\Omega x_{-i}^t \Omega)_i \, .
\]
\begin{prop}
    The automorphism $\sigma_\fg$ acts on the basis of $\fg$ via:
    \[\sigma_{\fg} \bigl( x(a,b) \bigr) = (-1)^a \cdot x(a,a-b).\]
\end{prop}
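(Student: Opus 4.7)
The plan is to verify the identity blockwise, since both sides are tuples indexed by $\bZ_{2n}$ and the statement is linear. Fix $i\in\bZ_{2n}$; I need to compare the matrix $\sigma_\fg(x(a,b))_i = \Omega\, x(a,b)_{-i}^t\, \Omega$ with $(-1)^a\cdot x(a,a-b)_i$ by writing down explicitly the single nonzero entry of each and checking that both its position and its value agree.

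First I would record the general identity
\[
    (\Omega M^t \Omega)_{s,t} \;=\; (-1)^{\tilde s + t}\, M_{\tilde t,\tilde s},
\]
which follows immediately from $\Omega_{p,q}=(-1)^{\tilde p}\delta_{\tilde p,q}$ by a short double-sum collapse. Next, using the explicit description recalled before the proposition, the block $x(a,b)_{-i}$ is either zero or has exactly one nonzero entry, equal to $1$, located at $(a+j_0,\,1+j_0)$, where $j_0\in[0,2n-1]$ is the unique representative of $-i-b\pmod{2n}$; the block is nonzero precisely when $a+j_0\le 2n$. Plugging $M=x(a,b)_{-i}$ into the formula above, the entry $(\Omega M^t\Omega)_{s,t}$ is nonzero exactly when $\tilde s=1+j_0$ and $\tilde t=a+j_0$, that is when
\[
    (s,t)=(2n-j_0,\,2n-a-j_0+1),
\]
and its value equals $(-1)^{\tilde s+t}$.

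I would then do the parity count: with $\tilde s=1+j_0$ and $t=2n-a-j_0+1$ one has $\tilde s+t \equiv a\pmod 2$, so the nonzero entry of $\sigma_\fg(x(a,b))_i$ equals $(-1)^a$. Finally, I would check that $x(a,a-b)_i$ has its unique nonzero entry in exactly the same position. By the definition of $x(a,a-b)$, the relevant shift index is $j'\in[0,2n-1]$ with $(a-b)+j'\equiv i\pmod{2n}$, giving $j'\equiv 2n-a-j_0\pmod{2n}$; the nonzero entry is at $(a+j',\,1+j')=(2n-j_0,\,2n-a-j_0+1)$, matching what was computed for $\sigma_\fg(x(a,b))_i$. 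Moreover, the visibility conditions $a+j_0\le 2n$ and $a+j'\le 2n$ translate into the same inequality, so the two blocks vanish simultaneously.

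There is no conceptual obstacle here; the whole verification is a direct computation with the explicit matrix models. The only genuinely delicate point is the bookkeeping of three independent shifts — the block index modulo $2n$, the internal row/column position inside a block, and the involution $i\mapsto\tilde i$ — together with the alternating signs coming from $\Omega$. Once the general formula $(\Omega M^t\Omega)_{s,t}=(-1)^{\tilde s+t}M_{\tilde t,\tilde s}$ is in place, the remainder is essentially a parity count, and the appearance of $a-b$ as the new second index can be traced back to the single congruence $j'\equiv 2n-a-j_0\equiv i-(a-b)\pmod{2n}$.
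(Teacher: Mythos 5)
Your proof is correct and follows essentially the same route as the paper's: establish the general formula for conjugation by $\Omega$ entrywise, apply it to the single nonzero entry of the block $x(a,b)_{-i}$, do the parity count to get $(-1)^a$, and match the resulting position and visibility condition against those of $x(a,a-b)_i$. The only cosmetic difference is that the paper parametrizes by $-i=b+j$ while you solve for $j_0\equiv -i-b$ directly; the bookkeeping is otherwise identical.
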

\begin{proof}
    Fix $a \in [2n]$, $b \in \bZ_{2n}$ and let $-i = b+j$ for $j \in [0,2n-1]$. If $j \le 2n-a$ then all entries of $x(a,b)_{-i}$ are zero except for a 1 in position $(a+j,1+j)$, hence its transpose has only a 1 in position $(1+j,a+j)$.
    Because $\Omega_{st} = (-1)^{s+1} \delta_{s+t,2n+1}$, for any matrix $A=(A_{st})_{st}$ we obtain
    \[(\Omega A \Omega)_{st} = (-1)^{s+t+1} \cdot A_{2n-s+1, 2n-t+1}.\]
    Thus $\Omega x_{-i}^t \Omega$ has zeros everywhere except a $(-1)^{(1+j)+(a+j)+1}=(-1)^a$ in position $(2n-j,2n-a-j+1)$. If we let $\ell = 2n-a-j \in [0,2n-a]$, then $i = a-b+\ell$ and $\sigma_\fg\bigl(x(a,b)\bigr)_i = (-1)^a\cdot x(a,a-b)_i$.
    If $j \geq 2n-a+1$ then $x(a,b)_{-i}=0$ and $\ell + 2n \ge 2n-a+1$, so $\sigma_\fg \bigl(x(a,b)\bigr)_i = 0 = x(a,a-b)_i$.
\end{proof}
\begin{dfn}\label{def:lie-generators}
    Let $y(a,b) \coloneqq \frac{1}{2} \bigl[ x(a,b)+(-1)^a \cdot x(a,a-b) \bigr]$.
\end{dfn}
\begin{prop}\label{prop:dim-G-sp}
The dimension of $G^{sp}$ is $2n^2 +n$.
\end{prop}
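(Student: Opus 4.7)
The plan is to reduce the dimension computation to a counting problem in the Lie algebra $\mathfrak{g}$. Since we work over $\mathbb{C}$, the closed subgroup $G^{sp}$ of the smooth algebraic group $G$ is itself smooth, and its Lie algebra is exactly the fixed-point subspace $\mathfrak{g}^{sp} := \mathfrak{g}^{\sigma_{\mathfrak{g}}}$. Thus it suffices to prove $\dim \mathfrak{g}^{sp} = 2n^2+n$, and since $\sigma_{\mathfrak{g}}$ is an involution on a $\mathbb{C}$-vector space, its $+1$-eigenspace is spanned by the symmetrizations $y(a,b)$ from Definition~\ref{def:lie-generators}. The task therefore becomes: count the number of linearly independent $y(a,b)$.

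The key observation is that $\mathfrak{g}$ decomposes, as a $\sigma_{\mathfrak{g}}$-module, into a direct sum over $a \in [2n]$ of the subspaces $V_a := \mathrm{span}\{x(a,b) \mid b \in \mathbb{Z}_{2n}\}$, each of dimension $2n$ and each stable under $\sigma_{\mathfrak{g}}$ since $\sigma_{\mathfrak{g}}(x(a,b)) = (-1)^a x(a,a-b)$. So I would compute $\dim V_a^{\sigma_{\mathfrak{g}}}$ separately for each $a$ and sum. On $V_a$ the involution is induced by the combinatorial involution $b \mapsto a-b$ on $\mathbb{Z}_{2n}$ twisted by the sign $(-1)^a$; hence I need to separately analyze the parities.

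For $a$ even, the sign is $+1$, so $V_a^{\sigma_{\mathfrak{g}}}$ is spanned by the $y(a,b)$ indexed by orbits of $b \mapsto a-b$. The equation $2b \equiv a \pmod{2n}$ admits exactly the two solutions $b = a/2$ and $b = a/2 + n$, giving two singleton orbits; the remaining $2n-2$ elements pair up into $n-1$ orbits of size two, yielding $\dim V_a^{\sigma_{\mathfrak{g}}} = n+1$. For $a$ odd, the sign is $-1$, and $V_a^{\sigma_{\mathfrak{g}}}$ is spanned by the antisymmetrizations over orbits; now $2b \equiv a \pmod{2n}$ has no solution, all $2n$ indices split into $n$ orbits of size two, and each yields a one-dimensional contribution, so $\dim V_a^{\sigma_{\mathfrak{g}}} = n$. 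Summing over the $n$ even and $n$ odd values of $a \in [2n]$ gives $n(n+1) + n\cdot n = 2n^2 + n$.

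I don't foresee a true obstacle here; the main care needed is in the small case $a = 2n$ (even), verifying that both solutions $b = n$ and $b = 2n$ of $2b \equiv 2n$ are indeed distinct elements of $\mathbb{Z}_{2n}$, so the count $n+1$ is uniform across all even $a$. As a sanity check, for $n=1$ one gets $\dim G^{sp} = 3 = \dim Sp_2$ and for $n=2$ one gets $10 = \dim Sp_4$, matching Theorem~B. An alternative route would count the independent parameters among $\{a^{(i)}_j\}$ subject to the relations of Lemma~\ref{conditions}, but the Lie-algebra approach is cleaner since the $y(a,b)$ already provide an explicit spanning set.
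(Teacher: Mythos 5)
Your proposal is correct and follows the same strategy as the paper's proof: identify $\mathrm{Lie}(G^{sp})$ with the fixed subspace $\mathfrak{g}^{\sigma_{\mathfrak{g}}}$, use the spanning set $\{y(a,b)\}$, and count independent ones via the relation $y(a,b)=(-1)^a y(a,a-b)$. The only difference is that you spell out the parity-based orbit count (giving $n+1$ for even $a$ and $n$ for odd $a$) which the paper states without detail; your version is a fleshed-out rendering of the same argument.
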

\begin{proof}
The elements from Definition~\ref{def:lie-generators} span $\fg^{\sigma_\fg}$, and since $y(a,b) = (-1)^a \cdot y(a,a-b)$, a basis of $\fg^{\sigma_\fg}$ made of such elements has cardinality $2n^2 +n$. We end the proof by remarking that $\fg^{\sigma_\fg}$ is the Lie algebra of $G^{sp}$, which therefore has the same dimension.
\end{proof}
%The elements span $\fg^{\sigma_\fg}$, and since $y(a,b) = (-1)^a \cdot y(a,a-b)$, a basis of $\fg^{\sigma_\fg}$ made of such elements has cardinality $2n^2 +n$, so we have computed the dimension of $G^{sp}$.
 %We have $4n^2$ possible pairs $(a,b)$, and $(a,b)=(a,a-b)$ if and only if $a \equiv_{2n} 2b$ (2n  in number). The remaining 2n(2n-1) are paired by $(a,b) \mapsto (a,a-b)$.
\subsection{Orbits of symplectic juggling patterns}
\begin{prop}\label{goodcells}
If $p_\J\in X(k,2n)^{sp}$, then \[C_\J \cap X(k,2n)^{sp} = G^{sp}. p_\J \, .\]    
\end{prop}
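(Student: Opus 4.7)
The inclusion $G^{sp} \cdot p_\J \subseteq C_\J \cap X(k,2n)^{sp}$ is immediate: $G^{sp}$ is a subgroup of $G$, so $G^{sp} \cdot p_\J \subseteq G \cdot p_\J = C_\J$; and by Definition~\ref{def:symp-automorphism} elements of $G^{sp}$ preserve the symplectic pairing between $U_{[2n]}^{(i)}$ and $U_{[2n]}^{(-i)}$, hence send isotropic subrepresentations to isotropic ones.

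For the reverse inclusion, let $V \in C_\J \cap X(k,2n)^{sp}$ and pick $A \in G$ with $A \cdot p_\J = V$; setting $H := \Stab_G(p_\J)$, it suffices to produce $h \in H$ such that $Ah \in G^{sp}$. The plan is to exploit the normal form for elements of $G$ recalled before Lemma~\ref{conditions}: each $A = (A_i)_{i \in \bZ_{2n}}$ is lower triangular with nonzero diagonal and is entirely determined by the entries $a^{(i)}_j$ of the first columns of its blocks. In the same terms, $H$ is cut out by the vanishing of those $a^{(i)}_j$ whose row index is not compatible with $J_i$, so that the free parameters of $h \in H$ are exactly those indexed outside the juggling pattern. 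Putting $B := Ah$, membership $B \in G^{sp}$ translates via Lemma~\ref{conditions} into the system \eqref{symp_eq_1}--\eqref{symp_eq_2} on the first columns of $B$, which I would solve inductively in $r$: equation \eqref{symp_eq_1} is handled by the diagonal of $h$, and each equation \eqref{symp_eq_2} is solved for one of the remaining entries of $B$ by choosing an appropriate free parameter of $h$.

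The main obstacle lies in verifying the consistency of this inductive procedure: the equations \eqref{symp_eq_2} couple first columns attached to different vertices, and at each step the correction one may apply via $h$ is restricted to entries not indexed by $J_i$. The hypothesis $V \in X(k,2n)^{sp}$, equivalently $V \subseteq \sigma V$, is precisely what ensures that the equations whose indices lie in $J_i$ are automatically satisfied once \eqref{symp_eq_1} and the previously solved equations hold, so that no contradiction arises; without isotropy of $V$ the system would be inconsistent. Conceptually, this can be phrased as saying that the non-abelian cocycle $A^{-1}\sigma_G(A)$ attached to the coset $AH$ becomes a coboundary inside $H$ for the involution $\sigma_G$ exactly when $V$ is isotropic. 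A useful sanity check is the dimension equality
\[\dim G^{sp} - \dim \Stab_{G^{sp}}(p_\J) = \dim \bigl(C_\J \cap X(k,2n)^{sp}\bigr),\]
which is consistent with Proposition~\ref{prop:dim-G-sp} and Theorem~\ref{thm:top-dim-cells}.
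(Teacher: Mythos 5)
Your overall plan matches the paper's: use the normal form of elements of $G$ (lower triangular blocks determined by their first columns, \cite[Proposition~4.5]{FLP22}) and solve the system of Lemma~\ref{conditions} inductively to produce an element of $G^{sp}$ sending $p_\J$ to the given point $V$. The easy inclusion $G^{sp}\cdot p_\J\subseteq C_\J\cap X(k,2n)^{sp}$ is correctly handled.

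However, there is a genuine gap where you yourself flag ``the main obstacle'': you assert, without argument, that the isotropy of $V$ makes the system \eqref{symp_eq_1}--\eqref{symp_eq_2} solvable given the forced entries, but you do not exhibit \emph{why}. The paper supplies exactly the missing mechanism. First, it uses that $p_\J$ decomposes into indecomposables $p_{\J,1}\oplus\dots\oplus p_{\J,s}$, so that $V\in C_\J$ is completely determined by $s$ leading vectors $v_{i_c}$; this pins down exactly which entries $a^{(i)}_j$ are forced by equation \eqref{cells-via-symp-aut}, and these forced entries automatically satisfy the relations among themselves because $(V_i,V_{-i})=0$. Second, and crucially, the paper points out the triangular structure in $r$: relation \eqref{symp_eq_2} expresses $a^{(r-i)}_r$ in terms of $a^{(\bullet)}_{r'}$ with $r'\le r$, and for each pair of vertices there is only one such relation. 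It is this triangularity that makes the induction on $r$ actually run, and it is what you would need to make explicit rather than deferring to ``consistency.'' Your formulation via a coset representative $Ah$ with $h\in H=\Stab_G(p_\J)$ is a valid rephrasing (the paper also notes the $A$ produced is not unique), but the reduction to ``$h$ has free parameters indexed outside $\J$'' is doing the same bookkeeping in different clothes and still leaves the solvability claim unproved.

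A word on your closing ``cocycle'' remark: for $A^{-1}\sigma_G(A)$ to be a coboundary \emph{inside} $H$ one would want $H$ to be $\sigma_G$-stable, and this fails in general. From Lemma~\ref{sigmas}, if $A\in\Stab_G(p_\J)$ then $\sigma_G(A)\in\Stab_G(p_{R\J})$, which coincides with $\Stab_G(p_\J)$ only when $\J=R\J$ (the Lagrangian case $k=n$ with $\J$ self-dual). For general isotropic $\J$ the framing needs to be adjusted (e.g.\ to a twisted cocycle condition between the two stabilizers), so as stated it is not quite correct even as a heuristic.
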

\begin{proof}
Let $V=(V_i)_i$ be a point in $C_\J\cap X(k,2n)^{sp}$. We want to show that there exists $A=(A_i)_i\in G^{sp}$
such that $V=A.p_\J$. Observe that this $A$ is not unique since $\dim G^{sp} > \dim X(k,2n)^{sp}$. Let $e_j^{(i)}$ denote the $j$-th standard basis vector of the $i$-th copy of $\bC^{2n}$ as vector space of the $\Delta_{2n}$-representation $U_{[2n]}$. The point $p_\J$, seen as a $\Delta_{2n}$-module, is a direct sum of several indecomposables $p_{\J,1}\oplus\dots\oplus p_{\J,s}$ (see \cite[Example~3.3]{FLP22} and \cite[Proposition~3.2]{FLP23a}). For $c \in [s]$, let 
$e_{j_c}^{(i_c)}$ denote the basis vector corresponding to the starting point of the indecomposable $p_{\J,c}$ viewed as a subrepresentation of $U_{[2n]}$.
Then $V$ is completely determined by vectors 
$v_{i_c}\in V_{i_c}$, for $c=1,\dots, s$  such that 
\[
v_{i_c} = \sum_{j=j_c}^{2n} b_{j,i_c} e^{(i_c)}_j,\quad \mathrm{with} \ b_{j_c,i_c}\ne 0.
\]
Since $V\in X(k,2n)^{sp}$, we know that $(V_i,V_{-i})=0$.
%Recall that a group element $A=(A_i)_i\in {\rm Aut}(U_{2n})$ is determined by the numbers $a^{(i)}_j$ (the first columns of $A_i$). 
The equality $A.p_J=V$ means that $s$ (out of $2n$) first columns start with the numbers $b_{\bullet,\bullet}$. More precisely, 
the following equalities hold for all $c \in [s]$: 
\begin{equation}\label{cells-via-symp-aut}
    a_1^{(i_c-j_c+1)} = b_{j_c,i_c},\quad
a_2^{(i_c-j_c+1)} = b_{j_c+1,i_c},\quad\ldots\ , \quad
a_{2n-j_c+1}^{(i_c-j_c+1)} = b_{2n,i_c}.
\end{equation}

We are left with the following problem: given several first entries of first columns of several matrices $A_i$ satisfying the
orthogonality conditions, %(recall $(V^{(i)},V^{(-i)})=0$)
we have to complete this data to an element $A\in G^{sp}$. We proceed by inductive application of Lemma~\ref{conditions} to prove this claim:

The equations from Lemma~\ref{conditions} satisfy the following properties:
\begin{itemize}
\item $a_1^{\bullet}$ is either fixed by (\ref{cells-via-symp-aut}) and (\ref{symp_eq_1}) or free; %if several coefficients $a_1^{\bullet}$ are given, then all other coefficients $a_1^{\bullet}$ can be recovered,
\item for any pair $i_1\ne i_2\in\bZ_{2n}$ there is a single relation involving $a^{(i_1)}_{\bullet}$ and $a^{(i_2)}_{\bullet}$;
\item the relation
\[
a^{(i)}_1 a^{(r-i)}_r = a^{(i)}_2 a^{(r-i)}_{r-1} - \dots + (-1)^{r} a^{(i)}_r a^{(r-i)}_1
\]
allows one to reconstruct $a^{(r-i)}_r$ starting from $a^{(\bullet)}_{r'}$ with $r'\le r$.
\end{itemize}
Observe that the number of free parameters depends on $\dim\mathrm{Stab}_{G^{sp}}p_\J$.
The procedure of recovering $A\in  G^{sp}$ is as follows. 
Start with $a^{\bullet}_1$: some of them are known from $V$ and all the others are free or recovered via relation (\ref{symp_eq_1}). 
We proceed with $a^{\bullet}_2$ and so on. At each step some coefficients are known from the data given by $V$ as in (\ref{cells-via-symp-aut}). All the others are either free or recovered by using the relations from Lemma~\ref{conditions}.
\end{proof}

\begin{example}
Let $k=n=2$ and let $\J=(J_0,J_1,J_2,J_3)$ be given by $(\{1,2\},\{2,3\},\{3,4\},\{1,4\})$.   
Then $p_\J=p_{\J,1}\oplus p_{\J,2}$, where both indecomposables $p_{\J,1}$ and $p_{\J,2}$ are four-dimensional, $p_{\J,1}$ starts at vertex $0$ and $p_{\J,1}$ starts at vertex $3$ (hence $s=2$, $i_1=0$, $i_2=3$).  
A point $V=(V_i)_i\in C_\J\cap X(k,2n)^{sp}$ is completely determined by two vectors:
\begin{gather*}
v_0 = b_{1,0}e_1^{(0)} + b_{2,0} e_2^{(0)} + b_{3,0} e_3^{(0)} + b_{4,0} e_4^{(0)} \in V_0,\ b_{1,0}\ne 0, \\
v_3 = b_{1,3}e_1^{(3)} + b_{2,3} e_2^{(3)} + b_{3,3} e_3^{(3)} + b_{4,3} e_4^{(3)}\in V_3, \ b_{1,3}\ne 0.
\end{gather*}
Hence one gets $a^{(0)}_j=b_{j,0}$, $a^{(3)}_j=b_{j,3}$ for $1\le j\le 4$.  The elements  $a^{(1)}_j$ and $a^{(2)}_j$ are subject to the following relations:
\begin{gather*}
a_{1}^{(2)}=b_{1,3}^{-1},\ a_{1}^{(1)}=b_{1,0}^{-1}, \ a_{2}^{(2)}b_{1,0}=b_{2,0} b_{1,3}^{-1}, \\ 
a_{1}^{(2)}a_{3}^{(1)} - a_{2}^{(2)}a_{2}^{(1)} + a_{3}^{(2)}a_{1}^{(1)} = 0,\ 
a_{1}^{(1)}b_{4,3} - a_{2}^{(1)}b_{3,3} + a_{3}^{(1)}b_{2,3} -  a_{4}^{(1)}b_{1,3}= 0.
\end{gather*}
One easily sees that this system does have solutions.
\end{example}

\begin{rem}
    Lemma~\ref{conditions} together with Proposition~\ref{goodcells} allows to compute the dimension of the cells $C_\J^{sp} \coloneqq C_\J\cap X(k,2n)^{sp}$. In Lemma~\ref{lem:symplectic-mutations} and Proposition~\ref{symplecticdim} we provide combinatorial formulas.
    %These two lemmas allow to compute the dimension of $G^{sp}$ and of the cells $C_\J^{sp} \coloneqq C_\J\cap X(k,2n)^{sp}$. %In Proposition \ref{symplecticdim} we compute the dimension of the top dimensional cells via the Lie algebras of $G$ and $G^{sp}$.
\end{rem}
\subsection{Orbit structure of the main object}
\begin{thm}\label{thm:group-orbits-are-cells}
The group $G^{sp}$ is a degeneration of the classical group $Sp_{2n}$; in particular, $\dim G^{sp}=2n^2+n$. The group $G^{sp}$ acts on $X(k,2n)^{sp}$ with a finite number of orbits, the orbits are affine cells and are naturally labeled by the symplectic juggling patterns. %\alex{The Proof uses Proposition~\ref{prop:dim-G-sp}.}
\end{thm}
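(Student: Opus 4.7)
The theorem contains four assertions: the degeneration statement, the dimension equality, the finiteness of $G^{sp}$-orbits with a labelling by $JP(k,2n)^{sp}$, and the affine cell structure of the orbits. My plan is to handle them in this order, relying on the setup of Section~\ref{sec:degeneration} together with the results already proved in the present section.

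For the degeneration I would extend the involution $\sigma_G$ of \eqref{eqn: involution on automorphism gp} to the entire family $G_z\coloneqq \Aut_{\Delta_{2n}}(U_{[2n]}(z))$ introduced in Section~\ref{sec:degeneration}. The formula $A\mapsto (-\Omega\, A_{-i}^{-t}\,\Omega)_i$ makes sense uniformly in $z$, since $\Omega$ is independent of $z$ and a direct calculation gives $\Omega\,\tau_{1,z}^t\,\Omega=\tau_{1,z}$. For $z\neq 0$ the evaluation-at-a-vertex identification $G_z\simeq GL_{2n}$ intertwines $\sigma_{G,z}$ with the classical involution $g\mapsto -\Omega\, g^{-t}\,\Omega$ on $GL_{2n}$, whose fixed subgroup is $Sp_{2n}$; at $z=0$ the fixed subgroup is $G^{sp}$ by definition. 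This exhibits $G^{sp}$ as the special fibre of a family whose generic fibres are $Sp_{2n}$. The dimension equality $\dim G^{sp}=2n^2+n$ is Proposition~\ref{prop:dim-G-sp} and is consistent with this family having constant fibre dimension.

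The $G^{sp}$-action on $X(k,2n)^{sp}$ is a two-line consequence of Lemma~\ref{sigmas}: for $A\in G^{sp}$, using $\sigma_G(A)=A$ together with $\sigma_G(A)\cdot V=\sigma(A\cdot \sigma V)$ one gets $A\cdot \sigma V=\sigma(A\cdot V)$, so that $V\subseteq \sigma V$ yields $A\cdot V\subseteq A\cdot \sigma V=\sigma(A\cdot V)$. For the orbit decomposition, the $G$-stratification $X(k,2n)=\bigsqcup_{\J\in JP(k,2n)} C_\J$ combined with Proposition~\ref{goodcells} identifies the $G^{sp}$-orbits on $X(k,2n)^{sp}$ with the sets $G^{sp}\cdot p_\J$ for $\J\in JP(k,2n)^{sp}$, giving the claimed finite labelling.

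The main obstacle is the affine cell statement. My approach would be to exploit that $C_\J$ is already an affine cell by \cite{FLP22} with an explicit parametrization by tuples of first columns $(a_j^{(i)})$, and to observe that $C_\J^{sp}\coloneqq C_\J\cap X(k,2n)^{sp}$ is cut out inside $C_\J$ by the isotropy conditions $(V_i,V_{-i})=0$. Under the parametrization these translate into the polynomial equations of Lemma~\ref{conditions}. The recursion used in the proof of Proposition~\ref{goodcells} already orders the parameters $a_j^{(i)}$ so that at each step the new equation can be solved uniquely for one of them as a polynomial in the previously treated ones; the surviving ``free'' parameters then furnish a biregular isomorphism between $C_\J^{sp}$ and an affine space. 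The technical point I would need to verify carefully is that this substitution is regular rather than merely rational: this reduces to the leading coefficient at each step being a unit on $C_\J$, which is ensured by \eqref{symp_eq_1} since it expresses each $a_1^{(i)}$ as the inverse of $a_1^{(1-i)}$, both of which are invertible on $C_\J$. Once this is in place the four parts of the statement assemble into the theorem.
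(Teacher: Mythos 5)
Your proposal is correct and follows essentially the same route as the paper's proof, assembling Section~\ref{sec:degeneration}, Proposition~\ref{prop:dim-G-sp}, Proposition~\ref{goodcells}, Lemma~\ref{conditions}, and the affine cell structure of $G$-orbits from \cite{FLP22} in the same way. You merely flesh out two steps the paper states tersely: extending $\sigma_G$ over the entire family $(G_z)_{z\in\bC}$ to realize $G^{sp}$ as a degeneration of $Sp_{2n}$, and observing that the recursive parameter elimination is regular because each dependent $a_r^{(\bullet)}$ is solved for by dividing by a unit $a_1^{(\bullet)}$.
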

\begin{proof}
 The degeneration procedure is analogous to  $X(k,2n)^{sp}$ as degeneration of $Gr(k,2n)^{sp}$, as described in Section~\ref{sec:degeneration}. The dimension of $G^{sp}$ is computed in Proposition~\ref{prop:dim-G-sp}. In Proposition~\ref{goodcells}, the symplectic subsets of cells are obtained as $G^{sp}$-orbits of the points corresponding to juggling patterns as defined in Remark~\ref{rem:coordinate-sub-reps}. This explicit description of the symplectic subsets of the $G$-orbits, together with the equations from Lemma~\ref{conditions}, imply that they are affine cells, since this holds for the $G$-orbits by \cite[Theorem~1]{FLP22}. 
\end{proof}

\section{Properties of the symplectic orbits}
\label{sec:orbits}
In this section we examine the poset structures on $JP(k,2n)^{sp}$ and the dimension of the cells $C_\J^{sp} \subset X(k,2n)^{sp}$ for $\J \in JP(k,2n)^{sp}$. The computations are based on the combinatorics of the so called coefficient quivers associated to the juggling patterns.
\subsection{Mutations of coefficient quivers}
Let $B = \{ e_j^{(i)} \colon j \in [2n]\}_{i\in\bZ_{2n}}$, where $e_j^{(i)}$ is as usual the $j$-th standard basis vector in the $i$-th copy of $\bC^{2n}$ as vector space of the $\Delta_{2n}$-representation $U_{[2n]}$. 

\begin{dfn}
    Let $Q(U_{[2n]},B)$ denote the coefficient quiver of $U_{[2n]}$ with respect to the basis $B$. Thus $Q$ has $B$ as vertex set and arrows $e_j^{(i)} \to e_{j+1}^{(i+1)}$ for any $i\in \bZ_{2n}$ and any $j\in[2n-1]$ (see \cite[Definition~2.2, Definition~3.3]{FLP22} for more detail).
\end{dfn}
Note that every $(k,2n)$-juggling pattern $\J \in JP(k,2n)$ can be identified with an appropriate successor-closed subquiver $S_\J$ in $ Q(U_{[2n]},B)$ as follows: the subquiver $S_\J$ contains a vertex $e^{(i)}_j$ if $j \in J_i$; moreover, it contains an arrow of $Q(U_{[2n]},B)$ if its source and target are contained in the vertex set of  $S_\J$. Then $S_\J$ is successor-closed in $Q(U_{[2n]},B)$ ( we write $S_\J \subseteq^{sc} Q(U_{[2n]},B)$ for short), i.e. if a vertex $v \in S_\J$ is a source of an arrow $\alpha: v \to w$ of $Q(U_{[2n]},B)$ then $S_\J$ also contains $w$ (and hence $\alpha$). The above identification defines an isomorphism between $JP(k,2n)$ and the set
\[ SC(k,2n) := \big\{ S \subseteq^{sc} Q(U_{[2n]},B) \  : \ \#S\cap \{e^{(i)}_j \ : \ j \in [2n] \} = k \ \mathrm{for} \ \mathrm{all} \ i \in \bZ_n \big\}. \]

\begin{dfn}\label{defn: mutation}
Two elements $S,S' \in SC(k,2n)$ are connected by a mutation $\mu : S' \to S$ if they differ by the position of a (predecessor closed) segment, i.e.:% following holds
\begin{align*}
S'\setminus S &= e^{(i)}_{j} \to e^{(i+1)}_{j+1} \to \dots \to e^{(i+\ell)}_{j+\ell},\\
S\setminus S' &= e^{(i)}_{j+s} \to e^{(i+1)}_{j+1+s} \to \dots \to e^{(i+\ell)}_{j+\ell+s}.
\end{align*}
\end{dfn}
We write $S' \geq^\mu S$ if there is a sequence of mutations $\mu_1, \dots, \mu_r$ such that 
\[ 
S = \mu_r \circ \dots \circ \mu_1(S').
\]
By \cite[Corollary~2.22]{LP23b} this defines a partial order on $SC(k,2n)$. In \cite[Theorem~4.6]{FLP23a} we prove the following statement.
\begin{prop}
    There is an order preserving poset isomorphism between $JP(k,2n)$ with the order induced by cell closures in $X(k,2n)$ and $SC(k,2n)$ with the order induced by mutation sequences.
\end{prop}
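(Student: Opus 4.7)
The plan is to prove the proposition in three logical steps: establish that $\J \mapsto S_\J$ is a bijection, invoke the already-recalled identification of the cell-closure order on $JP(k,2n)$ with the combinatorial set-theoretic order, and then match the latter with the mutation order on $SC(k,2n)$ in a purely combinatorial manner. The bijection is routine: its inverse sends $S \in SC(k,2n)$ to the tuple $(J_i)$ with $J_i = \{j \in [2n] \colon e^{(i)}_j \in S\}$; the cardinality condition in $SC(k,2n)$ gives $|J_i| = k$, and successor-closedness together with the arrows $e^{(i)}_j \to e^{(i+1)}_{j+1}$ for $j<2n$ translates exactly into the juggling condition that $j \in J_i$ and $j<2n$ imply $j+1 \in J_{i+1}$. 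The reduction step invokes \cite[Corollary~4.7]{FLP23a}, recalled in Section~\ref{sec:background}, which identifies the cell-closure order on $JP(k,2n)$ with the combinatorial order $\leq$ defined pointwise on sorted sets. What remains is a purely combinatorial statement: under the bijection, $\leq$ on $JP(k,2n)$ agrees with $\leq^\mu$ on $SC(k,2n)$.

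For the implication $\leq^\mu \Rightarrow \leq$ I would induct on the length of a mutation sequence, reducing to a single mutation $\mu : S' \to S$. Such a $\mu$ replaces a predecessor-closed segment $e^{(i)}_j \to \dots \to e^{(i+\ell)}_{j+\ell}$ in $S'$ by $e^{(i)}_{j+s} \to \dots \to e^{(i+\ell)}_{j+\ell+s}$ in $S$ for some $s>0$. At each vertex $i+t$ with $t \in [0,\ell]$, the set $J_{i+t}$ agrees with $J'_{i+t}$ except that the entry $j+t$ is replaced by the strictly larger entry $j+t+s$. A direct lemma shows that replacing one element of a finite subset of $[2n]$ by a strictly larger element not already present makes its sorted sequence termwise larger; hence $J_i \geq J'_i$ for every $i$, i.e.\ $\J \leq \J'$.

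For the converse, assume $\J \leq \J'$ with $\J \neq \J'$ and induct on the total discrepancy $d(\J,\J') \coloneqq \sum_{i,t}(j_{i,t}-j'_{i,t})$, where $J_i = \{j_{i,1} < \dots < j_{i,k}\}$ and $J'_i = \{j'_{i,1} < \dots < j'_{i,k}\}$. The goal at each step is to construct an explicit mutation $\mu : S' \to S''$ producing $\J''$ with $\J \leq \J'' \leq \J'$ and $d(\J,\J'')<d(\J,\J')$, iterating until $\J'' = \J$. The construction I have in mind is: pick a vertex $i_0$ with $J_{i_0} \neq J'_{i_0}$, choose a minimal $j_0 \in J'_{i_0} \setminus J_{i_0}$, follow arrows of $Q(U_{[2n]}, B)$ forward from $e^{(i_0)}_{j_0}$ to identify the maximal predecessor-closed segment of $S'$ beginning there, and choose the shift $s>0$ to be the smallest value landing in a column of $S'$ free at each of the segment's vertices.

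The main obstacle is this last construction, and it is where most of the technical content sits. One must ensure that (i) the shifted segment is not already contained in $S'$, so that the pair defines a legal mutation per Definition~\ref{defn: mutation}; (ii) the resulting $S''$ is successor-closed with all column sums equal to $k$, hence genuinely lies in $SC(k,2n)$; and (iii) the inequality $\J \leq \J''$ persists so that the inductive measure strictly decreases. Verifying (i)--(iii) requires careful bookkeeping that exploits both the grid structure of $Q(U_{[2n]}, B)$ and the pointwise comparison of sorted sets; in particular one has to control which columns are free simultaneously at every vertex of the chosen segment, which is where the combinatorics becomes genuinely subtle.
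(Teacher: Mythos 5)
The paper does not prove this proposition at all: it defers to an external reference, saying ``In \cite[Theorem~4.6]{FLP23a} we prove the following statement.'' So there is no paper proof here for you to match; what you have produced is an attempt at a self-contained argument in its place, and it is instructive to evaluate it on its own terms.

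Your bijection step and your reduction via \cite[Corollary~4.7]{FLP23a} are fine, and the easy direction ($\leq^\mu$ implies $\leq$) is correct: a single mutation replaces an entry of each affected $J'_{i+t}$ by a strictly larger one, and the Gale-order lemma you invoke (replacing an element of a $k$-subset of $[2n]$ by a strictly larger new element increases the sorted tuple termwise) is true by a short case analysis. The problem is the converse. You set up an induction on a discrepancy measure and propose a specific mutation to perform, but you then explicitly defer the verification of (i) legality of the mutation, (ii) membership of the result in $SC(k,2n)$, and (iii) persistence of the pointwise inequality, calling this ``where the combinatorics becomes genuinely subtle.'' That deferral is precisely the content of the proposition; without it you have a plan, not a proof. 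Concretely, note that every maximal segment of a successor-closed subquiver of $Q(U_{[2n]},B)$ must terminate at level $2n$, so the predecessor-closed subsegment you remove and the shift $s$ you apply must be chosen compatibly across all affected vertices $i,\dots,i+\ell$ simultaneously; one also has to make sure the shifted block is disjoint from $S'$, that its predecessor $e^{(i-1)}_{j+s-1}$ is absent, and that the intermediate juggling pattern $\J''$ does not overshoot $\J$ in the Gale order. None of this is handled, and it is not automatic. As it stands, the argument has a genuine gap in the hard direction; to close it you would essentially reproduce the combinatorial analysis carried out in \cite[Theorem~4.6]{FLP23a} (or in \cite[Corollary~2.22]{LP23b}, which the paper cites for well-definedness of the mutation order).
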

\subsection{Maximal symplectic juggling patterns}
The above poset isomorphism allows us to apply the combinatorics of mutations to examine the structure of cell closures in $X(k,2n)$ and its subvariety $X(k,2n)^{sp}$. We say $S' \geq^\mu S$ are \emph{adjacent} if there is no sequence of mutations from $S'$ to $S$ with $r >1$. Analogously we define adjacent juggling patterns.
\begin{lem}\label{problemslemma}
    Let $\J$ be a non-maximal $(k,2n)$-juggling pattern such that $\J \subseteq R\J$. Now let $\J' \ge \J$ be another $(k,2n)$-juggling pattern, adjacent to $\J$. Suppose $\J' \not \subseteq R \J'$. Then there exists $\J'' \ge \J'$, adjacent to $\J'$, such that $\J'' \subseteq R\J''$.
\end{lem}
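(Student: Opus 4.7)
I would translate the problem to the coefficient quiver $Q = Q(U_{[2n]},B)$ and work with the involution $\iota$ on its vertex set defined by $\iota(e_j^{(i)}) = e_{2n+1-j}^{(-i)}$. Since $2n+1$ is odd, $\iota$ is fixed-point-free, and a direct check shows that it reverses the orientation of every arrow of $Q$. A juggling pattern $\mathcal{K}$ is symplectic exactly when $S_{\mathcal{K}}$ contains at most one vertex of every $\iota$-orbit. The candidate for $\J''$ is obtained by applying to $S_{\J'}$ the $\iota$-mirror of the given mutation $\mu$.

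\textbf{Locating the conflict.} Let $\mu : S_{\J'} \to S_{\J}$ be the given mutation, exchanging the segment $T' \subset S_{\J'}$ sitting on rows $\{j,\dots,j+\ell\}$ and columns $\{i,\dots,i+\ell\}$ with the segment $T \subset S_{\J}$ on the same columns but rows $\{j+s,\dots,j+s+\ell\}$, for some $s \ge 1$. Because $\J$ is symplectic while $\J'$ is not, every conflicting pair $\{v,\iota v\}\subset S_{\J'}$ must involve $T' = S_{\J'}\setminus S_{\J}$. A parity check prevents $T'$ from containing both members of such a pair: writing $v = e_{j+m}^{(i+m)}$ and $\iota v = e_{j+m'}^{(i+m')}$ with $m,m'\in[0,\ell]$ yields $2(j-i-n)\equiv 1\pmod{2n}$, which has no solution. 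Hence each conflict has exactly one vertex in $T'$ and its $\iota$-partner in $\iota T' \cap (S_{\J}\setminus T)$.

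\textbf{The mirror mutation.} I would define $S_{\J''}$ by removing $\iota T'$ from $S_{\J'}$ and inserting $\iota T$. The images $\iota T'$ and $\iota T$ are directed paths of length $\ell$ on columns $\{-i-\ell,\dots,-i\}$ at rows $\{2n-j-\ell+1,\dots,2n-j+1\}$ and $\{2n-j-s-\ell+1,\dots,2n-j-s+1\}$ respectively, so the transition shifts a segment up by exactly $s$ rows, fitting the format of a single mutation $S_{\J''}\to S_{\J'}$. Three facts must be checked: (a) $\iota T'\subseteq S_{\J'}$, strengthening the existence of the single conflict witness $\iota v$ to the entire image path; (b) $\iota T\cap S_{\J'}=\emptyset$, which follows since $\{w,\iota w\}\subset S_{\J}$ for any $w\in T$ would violate symplecticity of $\J$ and $\iota T\cap T' = \emptyset$ by the same parity argument; (c) the predecessor- and successor-closed boundary conditions at the endpoints of $\iota T'$ and $\iota T$ are the $\iota$-images of those for $T'$ and $T$, which transfer because $\iota$ swaps the two notions.

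\textbf{Conclusion and main obstacle.} The transition $S_{\J'}\leadsto S_{\J''}$ is one upward mutation shifting $\iota T'$ by $s$ rows, structurally parallel to the cover $\mu$, so it yields $\J''\ge \J'$ adjacent. Every conflict of $\J'$ had its $\iota$-partner inside the removed $\iota T'$, hence vanishes in $\J''$, and no new conflict appears because each vertex of the inserted $\iota T$ has its $\iota$-partner in $T$, which has already been removed by $\mu$. The main technical hurdle is verifying the full containment (a) $\iota T'\subseteq S_{\J'}$, and not merely the weaker $\iota T'\cap S_{\J'}\neq \emptyset$; this requires propagating the symplecticity of $\J$ and the segment structure of $T$ across the full column range of $T'$, exploiting that $\iota$ reverses arrows in $Q$. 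Once (a) is established, the remaining verifications reduce to direct consequences of the symplecticity of $\J$ and the mutation formalism.
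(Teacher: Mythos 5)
Your setup is right and closely mirrors the paper's: both reduce to the coefficient quiver, identify symplecticity with the condition that $S_\J$ meets each $\iota$-orbit at most once, locate the conflicts created by $T' = S_{\J'}\setminus S_{\J}$, and observe that the partners of these conflicts lie on the mirror segment $\iota T'$. Your parity argument showing a conflict can never be internal to a single segment is also exactly the right observation.

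However, claim (a) — that $\iota T'\subseteq S_{\J'}$ in full — is not merely a ``technical hurdle''; it is false in general, and the paper's proof is built precisely around this failure. The paper explicitly remarks that the mirror segment ``again forms a segment (\emph{not necessarily contained completely in} $\J'$).'' What it then does is to restrict attention to the \emph{problems}, i.e.\ the vertices of $\iota T'$ that actually lie in $S_{\J'}$; it checks these form a successor-closed initial portion of $\iota T'$ (using that $\J'$ is a juggling pattern), takes the leftmost problem as the start of a shorter segment, and applies a mutation of the \emph{same} shift $s$ to that sub-segment only. So $\J''$ is produced by mirroring only part of $\mu$, not all of it. Your proposed full mirror move $S_{\J'}\leadsto (S_{\J'}\setminus\iota T')\cup\iota T$ is not even a well-defined set operation on $S_{\J'}$ when part of $\iota T'$ lies outside $S_{\J'}$, so the argument does not get off the ground in that case. (A secondary issue: your parity argument for $\iota T\cap T' = \emptyset$ yields the congruence $2(x-a)\equiv 1+s \pmod{2n}$, which is only contradictory when $s$ is even, so it too needs a different justification — though this becomes moot if you instead follow the paper's truncated mutation, where the inserted segment $\iota T$ is correspondingly shortened and its disjointness from $S_{\J'}$ is argued from symplecticity of $\J$ and the removal of $T$.)

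To fix your proof: replace (a) by the weaker and correct statement that the vertices of $\iota T'$ inside $S_{\J'}$ form a predecessor-closed initial path in $\iota T'$, prove this using that $\J'$ is a juggling pattern, and then apply the mutation by $s$ to just that sub-path. After that, your checks for why the result is symplectic go through along the lines of the paper's final sentence.
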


\begin{proof}
    First we set some notation: given $\J$ a juggling pattern, $x \in [2n]$ and $a \in \bZ_{2n}$, we write $x^{(a)} \in \J$ to indicate $x \in J_a$.
    
    We consider the inverse simple mutation in the coefficient quiver that links the successor-closed subquivers associated to $\J$ and $\J'$: let
    \[x^{(a)} \rightarrow {(x+1)}^{(a+1)} \rightarrow \cdots \rightarrow {(x+\ell)}^{(a+\ell)}\]
    be the elements of the sets of $\J$ that mutate into different ones. There exists an integer $s \ge 1$ such that these numbers are substituted with\begin{equation}\label{newsegment}(x-s)^{(a)} \rightarrow {(x+1-s)}^{(a+1)} \rightarrow \cdots \rightarrow {(x+\ell-s)}^{(a+\ell)}\end{equation}
    and $\J'$ differs from $\J$ only by this change, since they are adjacent. Since this is a mutation, we have the following:
    \begin{itemize}
        \item $x \ge 1+s \ge 2$;
        \item $(x-1)^{(a-1)} \notin \J$;
        \item $(x-s-1)^{(a-1)} \notin \J'$;
        \item $x+\ell \le 2n$, and therefore $\ell \le 2n-2$;
        \item $x+\ell-s+1 \in J_{a+\ell+1} \cap J'_{a+\ell+1}$.
    \end{itemize}
    Recall now that $\J$ is symplectic, so we know that for all $a \in \bZ_{2n}$ and all $y \in J_a$, the element $2n+1-y$ is not in $J_{-a}$. By our assumption, $\J'$ does not have this property, that is, there exist one vertex $b$ and one element $y \in J'_b$ such that $2n+1-b \in J'_{-b}$, and it must be one that appears in segment \eqref{newsegment}. We consider the elements paired to the elements of \eqref{newsegment} by the symplectic form. They again form a segment (not necessarily contained completely in $\J'$):
    \begin{equation}\label{faulty}
    (2n-x-\ell+s+1)^{(-a-\ell)} \rightarrow \cdots \rightarrow (2n-x+s)^{(-a-1)} \rightarrow (2n-x+s+1)^{(-a)}.\end{equation}
    We will call the elements of this segment which are in $\J'$ \emph{problems}, and we know there is at least one, say $(2n-x-j+s+1)^{(-a-j)}$ for some $j \in [0,\ell]$. Then for all $0 \le i \le j$, $2n-x-i+s+1$ is in $J'_{-a-i}$, because $\J'$ is a juggling pattern. So the problems make up a successor closed subsegment of \eqref{faulty}. We assume now that $(2n-x-j+s+1)^{(-a-j)}$ is the leftmost problem, so that it starts a segment in the successor-closed subquiver representing $\J'$. To obtain a new symplectic juggling pattern from $\J'$, we want to apply a mutation to remove this exact segment, again by subtracting the integer $s$. We can do so since $2n-x-j+s+1 \ge s+1$ (recall that $s\ge 1$, $j \le \ell$ and $x+\ell \le 2n$), and it creates a juggling pattern $\J''$ which is symplectic, because any problem with it would be of the form $(2n-x+1+i)^{(-a+i)}$, with $i \ge 2n-j$, and it is not possible since the element paired to it is neither in $\J'$ nor $\J''$, since they were removed with the first mutation \eqref{newsegment}.
\end{proof}
\subsection{Symplectic mutations and closures of symplectic cells}
\begin{cor}
    If $\J'' > \J$ are symplectic and adjacent in $JP(k,2n)$, or they are as in the previous lemma, then $p_\J$ is in the closure of the $G^{sp}$-orbit of $p_{\J''}$ in $X(k,2n)^{sp}$.
\end{cor}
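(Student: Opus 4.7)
The strategy in both cases is to construct an explicit morphism $V\colon \bC \to X(k,2n)^{sp}$ with $V(0) = p_\J$ and $V(\mu) \in C_{\J''}^{sp}$ for $\mu \neq 0$; since $C_{\J''}^{sp}$ is a single $G^{sp}$-orbit by Theorem~\ref{thm:group-orbits-are-cells}, this yields $p_\J \in \overline{C_{\J''}^{sp}}$.

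For Case 1, let the mutation $S_{\J''} \to S_\J$ shift the predecessor-closed segment $\{e_{j+p}^{(i+p)}\}_{p=0}^{\ell}$ to $\{e_{j+s+p}^{(i+p)}\}_{p=0}^{\ell}$, and set
\[
V(\mu)_{i+p} := V_{J_{i+p} \cap J''_{i+p}} \oplus \bC\bigl(\mu\, e_{j+p}^{(i+p)} + e_{j+s+p}^{(i+p)}\bigr)
\]
for $p \in [0,\ell]$, with $V(\mu)_a := V_{J_a}$ at all other vertices. Predecessor-closedness of the segment yields $V(\mu) \in X(k,2n)$ for every $\mu$, with $V(0) = p_\J$ and $V(\mu) \in C_{\J''}$ for $\mu \neq 0$. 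The crux is isotropy. Expanding $(V(\mu)_a, V(\mu)_{-a})$ bilinearly, the coordinate-coordinate and coordinate-line pairings vanish term by term, the $\mu$-terms by symplecticity of $\J''$ and the constant terms by symplecticity of $\J$. The line-line contribution, which arises only when $-a = i+p'$ also lies in the segment, is a polynomial $c_2\mu^2 + c_1\mu + c_0$; one has $c_2 = 0$ and $c_0 = 0$ immediately from symplecticity of $\J''$ and $\J$, while $c_1 = 0$ follows from a parity argument. Indeed, the indexing condition for a nonzero linear contribution, $(j+p)+(j+s+p') = 2n+1$, combined with the constraint $p+p' \equiv -2i \pmod{2n}$ imposed by $-a = i+p'$, forces $s$ to be odd, in which case the two surviving terms composing $c_1$ differ by the factor $(-1)^s = -1$ and cancel.

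Case 2 applies the same construction to both segments produced by Lemma~\ref{problemslemma} simultaneously. The two mutations of the lemma share the same shift $s$, and their segments $\sigma_2 \subset S_\J$ and $\sigma_1 \subset S_{\J''}$ have vertex sets related by the involution $c \mapsto -c$. I would deform the $\sigma_2$-line at vertex $a+q$ with parameter $\mu$ and the $\sigma_1$-line at vertex $-a-q$ with parameter $(-1)^{s+1}\mu$. Pairings within each segment, and pairings between a deformed line and the $R$-image of the other segment (which now appears as part of a coordinate subspace), are handled as in Case 1. The only genuinely new contributions are the cross-pairings between a $\sigma_2$-line at $a+q$ and a $\sigma_1$-line at $-a-q$; a direct computation reduces them to an expression of the form $(-1)^{2n+1-x-q}\bigl((-1)^s\mu + \lambda\bigr)$, where $\lambda$ denotes the parameter on the $\sigma_1$-line, and this vanishes identically precisely for our sign choice $\lambda = (-1)^{s+1}\mu$.

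The principal obstacle throughout is the bookkeeping of signs and indices in the isotropy verification, particularly the parity argument forcing $s$ to be odd in Case 1 and the identification of the correct relative sign between the two parameters in Case 2. Once these combinatorial checks are in place, the continuity of $V$ together with $V(\mu) \in C_{\J''}^{sp}$ for $\mu \neq 0$ and $V(0) = p_\J$ completes the argument.
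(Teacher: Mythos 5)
Your overall strategy (a one-parameter deformation $V(\mu)$ with $V(0)=p_\J$, lying in $C_{\J''}^{sp}$ for $\mu\neq0$) is the same as the paper's, which uses $V(t)_a = \Span\bigl(\{e_j^{(a)}:j\in J_a\cap J''_a\}\cup\{e_{j-s}^{(a)}+t e_j^{(a)}:j\in J_a\setminus J''_a\}\bigr)$ with $V(0)=p_{\J''}$ and $p_\J$ as the boundary point. In Case~1 your parity observation is a genuine (if small) improvement: the paper simply asserts the two cross terms ``are opposites of each other,'' but this holds only because the cross-term nonvanishing condition $j+y=2n+1+s$ together with $a\equiv -a'\pmod{2n}$ forces $s$ odd, a point the paper does not spell out. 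You make this explicit and correct.

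More importantly, in Case~2 you have found a real gap. The paper applies the uniform parameter $t$ to both segments, but then the cross-segment pairing at a vertex pair $(a,-a)$ with $a$ in segment~A and $-a$ in segment~B contributes $t\,(-1)^{x+p+1}\bigl(1+(-1)^s\bigr)$, which vanishes only when $s$ is odd — and unlike Case~1, here $s$ is not forced to be odd. Concretely, take $n=2$, $\J$ the minimal pattern $(\{3,4\},\{3,4\},\{3,4\},\{3,4\})$ and $\J''=(\{1,2\},\{2,3\},\{3,4\},\{1,4\})$: the double mutation of Lemma~\ref{problemslemma} has $s=2$, and the paper's $V(t)_0=\Span\{e_1^{(0)}+te_3^{(0)},\,e_2^{(0)}+te_4^{(0)}\}$ gives $(e_1+te_3,\,e_2+te_4)=t(e_1,e_4)+t(e_3,e_2)=2t\neq0$, so $V(t)\notin X(2,4)^{sp}$ for $t\neq0$. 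Your remedy — putting the parameter $\mu$ on segment~A and $(-1)^{s+1}\mu$ on segment~B — makes the cross-segment contribution $\mu(-1)^{x+p+1}+\lambda(-1)^{x+p-s+1}$ vanish identically precisely when $\lambda=(-1)^{s+1}\mu$, and in the example above yields $V(\mu)_0=\Span\{\mu e_1+e_3,\,-\mu e_2+e_4\}$, which is indeed isotropic. So your proposal is not merely a rephrasing of the paper's proof; it corrects an error in the Case~2 argument.

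One small caveat: you should verify that your sign-twisted path still lies inside the single cell $C_{\J''}$ for all $\mu\neq0$ (it does, since the leading positions are unchanged and the coefficients can be absorbed into the $G$-orbit parameterization of \cite[Theorem~4.13]{Pue2022}), and handle the degenerate cases where a vertex lies in both segments or where $a\equiv -a$; in the latter case $j=y$ and skew-symmetry alone kills the cross terms. With those checks spelled out, your argument is complete.
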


\begin{proof}
    We define the path $V(t), t \in \bC$, as follows: for $a\in\bZ_{2n}$ we set
    \[V(t)_a \coloneqq \Span \Bigl ( \{e_j^{(a)} \, \vert \, j \in J_a \cap J''_a\} \cup \{e_{j-s}^{(a)} + t \cdot e_j^{(a)}  \, \vert \, j \in J_a \text{\textbackslash} J''_a \} \Bigr ) \, ,\]
where $s$ is as in Definition \ref{defn: mutation} if $\J$ and $\J''$ are adjacent, or as in the proof of the previous Lemma if they are not.
It follows from the explicit description of the mutations as in Lemma~\ref{problemslemma} that $V(t)$ is a point in the cell $C_{\mathcal{J}''}$ of $X(k,2n)$ for all $t \in \bC$, since it satisfies the equations describing the cells as computed in the proof of \cite[Theorem~4.13]{Pue2022}.
    
    Now, we show that $V(t)$ is contained in $X(k,2n)^{sp}$. For any $j \in J_a \cap J''_a$, the vector $e_j^{(a)}$ pairs trivially with any other element in $V(t)_{-a}$, so we can take $j \in J_a \text{\textbackslash} J''_a$ and $y \in J_{-a} \text{\textbackslash} J''_{-a}$. Then
    \[(t \cdot e_j^{(a)} + e_{j-s}^{(a)}, t \cdot e_y^{(-a)} + e_{y-s}^{(-a)}) = (t \cdot e_j^{(a)}, e_{y-s}^{(-a)}) + (e_{j-s}^{(a)},t \cdot e_y^{(-a)}) =0 \, .\]
    The expression is zero because the two summands are opposites of each other, since $j-s$ and $y-s$ are respectively in $J''_a \text{\textbackslash} J_a$ and in $J''_{-a} \text{\textbackslash} J_{-a}$.

      Finally, we observe that $V(0)=p_{\J''}$ and the boundary point is $p_{\J}$.
\end{proof}
This motivates the following definition: 
\begin{dfn}
 A symplectic mutation is either a mutation $\mu: S' \to S$ such that both $S$ and $S'$ are symplectic or a pair of two mutations $\mu_2 : S' \to S$ and $\mu_1 : S'' \to S'$ where $S$ and $S''$ are symplectic and $\mu_1, \mu_2$ are as described in Lemma~\ref{problemslemma}.   
\end{dfn}
\begin{rem}
Sequences of symplectic mutations define a partial order on $SC(k,2n)^{sp}$ (i.e. the subset of $SC(k,2n)$ corresponding to the juggling patterns in $JP(k,2n)^{sp}$). Hence it is a natural question to ask if this partial order is induced by the mutation order on $SC(k,2n)$.
\end{rem}

\begin{example}In Example \ref{exple: symplectic 24}, we represented the Hasse diagram of $JP(2,4)^{sp}$ equipped with the combinatorial order. We grouped the elements horizontally in tiers by the dimension of their $G^{sp}$-orbit, from 3 to 0, top to bottom. In this case, the closure of a cell $C_\J^{sp}$ coincides with the union of symplectic cells for lower (symplectic) juggling patterns That is, the combinatorial order of symplectic $(2,4)$-juggling patterns coincides with the closure inclusion order on the set of $G^{sp}$-orbits in $X(2,4)^{sp}$. Otherwise stated,  closure inclusion order on the set of $G^{sp}$-orbits in $X(2,4)^{sp}$ is induced by the closure inclusion order on the set of $G$-orbits in $X(2,4)$. We conjecture that this is the case in general.
\end{example}

\begin{conj}\label{conj:induced-partial-order}
Let  $\J,\J' \in JP(k,2n)^{sp}$ such that $p_\J \in \overline{C_{\J'}} \subset X(k,2n)$. Then $p_\J \in \overline{C_{\J'}^{sp}} \subset X^{sp}(k,2n)$.
\end{conj}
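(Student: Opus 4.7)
The plan is to proceed by induction on $\dim C^{sp}_{\J'}-\dim C^{sp}_{\J}$; the base case $\J=\J'$ is immediate. For the inductive step I aim to construct an intermediate $\widetilde{\J}\in JP(k,2n)^{sp}$ with $\J\le\widetilde{\J}<\J'$ and $p_{\widetilde{\J}}\in\overline{C^{sp}_{\J'}}$. Granted such $\widetilde{\J}$, the induction hypothesis applied to the pair $(\J,\widetilde{\J})$ yields $p_{\J}\in\overline{C^{sp}_{\widetilde{\J}}}$; since $\overline{C^{sp}_{\J'}}$ is closed and $G^{sp}$-stable, it contains $G^{sp}\cdot p_{\widetilde{\J}}=C^{sp}_{\widetilde{\J}}$ and hence its closure, so $p_{\J}\in\overline{C^{sp}_{\J'}}$, as required. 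In this way, the conjecture reduces to the purely combinatorial task of producing $\widetilde{\J}$.

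To construct $\widetilde{\J}$ I would fix an arbitrary minimal mutation sequence $\J'=\J_0>\J_1>\cdots>\J_m=\J$ in $SC(k,2n)$ and examine the first step $\J'\to\J_1$. If $\J_1$ is itself symplectic, the single mutation is a symplectic mutation in the sense of the definition preceding Conjecture~\ref{conj:induced-partial-order}, the one-parameter family $V(t)$ from the proof of the corollary between Lemma~\ref{problemslemma} and that definition gives $p_{\J_1}\in\overline{C^{sp}_{\J'}}$, and we set $\widetilde{\J}=\J_1$. If $\J_1$ is non-symplectic, I would seek to prove a downward counterpart of Lemma~\ref{problemslemma}: starting from symplectic $\J'$ and a non-symplectic $\J_1$ reached by a single downward mutation, there should be a further single mutation $\J_1>\J_2$ with $\J_2$ symplectic. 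The pair $(\J_1,\J_2)$ would then combine with $(\J',\J_1)$ into a compound symplectic mutation, so that $p_{\J_2}\in\overline{C^{sp}_{\J'}}$, and we set $\widetilde{\J}=\J_2$. The combinatorial heart of this downward lemma mirrors the segment-substitution analysis used in the proof of Lemma~\ref{problemslemma}, with the roles of source and target exchanged via the $R$-involution.

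The main obstacle is not the existence of such a $\J_2$ but rather the compatibility condition $\J\le\J_2$. An arbitrary first mutation $\J'\to\J_1$ may not admit a repair mutation $\J_1\to\J_2$ dominating $\J$, so the initial mutation sequence must be selected carefully. A promising strategy is to pick the first mutation on a segment of the coefficient quiver whose $R$-paired counterpart is also acted upon by the descent $\J'\to\J$, so that the correction mutation dictated by the downward lemma is forced to remain above $\J$. Formalizing this choice amounts to a combinatorial priority argument on pairs of $R$-linked segments, and appears to be the technical crux of any proof along these lines.

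A complementary route, useful at least for the case $k=n$, would exploit Theorem~\ref{thm:emb-affine-symp-flag}: if one shows that the image of $X(k,2n)^{sp}$ in the affine $Sp_{2n}$-flag variety is a union of Schubert varieties whose Schubert stratification matches the $G^{sp}$-orbit decomposition under the embedding, then the closure order on symplectic cells coincides with the Bruhat order of the affine Weyl group of type $\widetilde{C}_n$, and the conjecture reduces to the classical compatibility between this Bruhat order and the restriction of the $\widetilde{A}_{2n-1}$ Bruhat order along the natural embedding of affine Weyl groups.
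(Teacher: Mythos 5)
This statement is an open \emph{conjecture} in the paper (Conjecture~\ref{conj:induced-partial-order}); the authors explicitly do not prove it, and your task as formulated asks you to prove something the paper itself leaves unresolved. Your proposal is therefore not being measured against a paper proof but against the problem's actual difficulty, and by your own account it is a strategy with unfilled gaps rather than a proof.

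Concretely, the gaps are genuine and not merely cosmetic. First, the ``downward counterpart'' of Lemma~\ref{problemslemma} that you invoke is not established anywhere: Lemma~\ref{problemslemma} starts from a symplectic $\J$, ascends by one mutation to a non-symplectic $\J'$, and then repairs upward to a symplectic $\J''\ge\J'$. Reversing the roles of source and target is not a formal symmetry, because $R$ maps $JP(k,2n)$ to $JP(2n-k,2n)$ and so does not act on $JP(k,2n)^{sp}$ when $k<n$; hence the ``$R$-involution'' cannot be used to transport the lemma downward except in the Lagrangian case $k=n$. You would need to redo the segment-and-problems analysis from scratch for descending mutations, and it is not clear the same mechanism produces a unique, well-placed repair. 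Second, and as you yourself identify, the real crux is the constraint $\J\le\widetilde{\J}$: choosing the first mutation in a chain from $\J'$ to $\J$ so that both it and its forced repair remain above $\J$ is precisely the combinatorial content that the authors were unable to settle, and your ``priority argument on $R$-linked segments'' is at present a hope, not an argument. Until that step is made precise and shown to always succeed, the induction does not close. Third, the affine-flag route requires identifying the image of $X(k,2n)^{sp}$ with a union of Schubert varieties whose stratification by $\mathbb{I}^{sp}$-orbits matches the $G^{sp}$-orbit stratification, \emph{and} a Bruhat-order compatibility statement between the affine Weyl groups of types $\widetilde{C}_n$ and $\widetilde{A}_{2n-1}$ tailored to the specific parabolic ranges that occur; Theorem~\ref{thm:emb-affine-symp-flag} gives the embedding but the order-matching is exactly what is in question and would require an independent argument. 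In short: the reduction to ``find a good $\widetilde{\J}$'' is sound, but everything that would make the conjecture a theorem is still missing.
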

In other words: If $S,S' \in SC(k,2n)^{sp}$ are connected by a sequence of mutations, does a sequence of symplectic mutations connecting $S$ to $S'$ exist?
\begin{lem}\label{lem:symplectic-mutations}
For $\J \in JP(k,2n)^{sp}$ the dimension of $C_\J^{sp}$ equals the number of symplectic mutations starting at $S_\J \in SC(k,2n)$.
\end{lem}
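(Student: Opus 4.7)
The plan is to compute $\dim C_\J^{sp}$ by a tangent-space analysis at the torus-fixed point $p_\J$. By \cite[Theorem~1]{FLP22} together with the poset isomorphism recalled before this lemma, $C_\J$ is an affine space whose dimension equals the number of mutations starting at $S_\J$ and carries natural coordinates $c_\mu$ indexed by such mutations. Since $C_\J^{sp} = G^{sp}\cdot p_\J$ by Proposition~\ref{goodcells}, it is irreducible and smooth, so its dimension equals $\dim T_{p_\J} C_\J^{sp}$, which I will compute as the dimension of the linear subspace of $T_{p_\J} C_\J$ cut out by the linearized isotropy condition.

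First I would write each coordinate $c_\mu$ as a triple $(j,j',a)$ recording that the generator attached to $e_j^{(a)}$ receives a first-order perturbation $c_\mu\, e_{j'}^{(a)}$, and expand $(V_a, V_{-a}) = 0$ to first order in the $c_\mu$'s. Using $(e_i^{(a)}, e_l^{(-a)}) = (-1)^{\tilde{i}} \delta_{\tilde{i}, l}$ and the fact that the constant terms vanish because $\J$ is symplectic, each pair $(j,y)\in J_a\times J_{-a}$ with $\tilde{y}<j$ produces the linear equation
\[
(-1)^y\, c_{j,\tilde{y},a} + (-1)^{\tilde{j}}\, c_{y,\tilde{j},-a} = 0,
\]
with the convention that a coefficient is zero whenever its triple does not index a mutation of $S_\J$. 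The higher-order contributions can be inductively absorbed by the strictly lower-triangular shape of the chart, as in the proof of \cite[Theorem~4.13]{Pue2022}, so the tangent-space dimension agrees with the solution dimension of this linear system.

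Next I would organize the equations using the natural involution $\iota(j,j',a) = (\tilde{j'}, \tilde{j}, -a)$ on triples. The system splits into three cases. For a fixed point of $\iota$ (which forces $a \equiv -a \pmod{2n}$ and $j + j' = 2n+1$), the identity $(-1)^j + (-1)^{2n+1-j} = 0$ makes the equation trivial, so the corresponding parameter is free. For a size-two $\iota$-orbit whose two representatives both index valid mutations of $S_\J$, the equation relates them and the pair contributes one free direction. For a size-two orbit with only one valid representative, the equation forces the corresponding coefficient to vanish. Summing gives $\dim C_\J^{sp} = \#\{\text{self-paired mutations of } S_\J\} + \#\{\iota\text{-pairs both valid}\}$.

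The remaining task, and the main obstacle, is to identify this algebraic count with the combinatorial count of symplectic mutations. The claim to verify is that fixed points of $\iota$ correspond bijectively to type (i) symplectic mutations $\mu : S_\J \to S$ (with $S$ necessarily symplectic by the $\iota$-symmetry of the moved segment), while size-two $\iota$-orbits with both sides valid correspond bijectively to type (ii) symplectic mutations as constructed in Lemma~\ref{problemslemma}. Concretely, for such a pair $(\mu, \iota(\mu))$ one checks that the shift parameter $s$ is the same on both sides and that the segment moved by $\iota(\mu)$ is exactly the "problem segment" formed by the symplectic partners of the segment moved by $\mu$; conversely, the two-step procedure of Lemma~\ref{problemslemma} always yields a pair of mutations of $S_\J$ interchanged by $\iota$. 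This matching together with the tangent-space calculation gives the desired equality.
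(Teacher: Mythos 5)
Your overall strategy — compute $\dim C_\J^{sp}$ from a linearization of the isotropy conditions in the cell coordinates and match the free parameters against symplectic mutations — is close in spirit to the paper's argument, which also analyzes how the conditions $(V_a,V_{-a})=0$ interact with the terminal parameters of $C_\J$. However, your case analysis and the resulting bijection claim do not hold.

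The trouble is in the three-way split by $\iota$-orbit type. A fixed point of $\iota$ forces $a\equiv -a\pmod{2n}$, so $a\in\{0,n\}$; you then claim these account for all type~(i) symplectic mutations, and that any size-two $\iota$-orbit with one invalid representative gives a coefficient forced to zero. Neither is right, because for many valid triples $(j,j',a)$ the equation you wrote down simply does not exist: the pairing $(e_{j'}^{(a)},e_y^{(-a)})$ contributes only when $y=\widetilde{j'}$, so there is an equation constraining $c_{j,j',a}$ only if $\widetilde{j'}\in J_{-a}$. When $\widetilde{j'}\notin J_{-a}$ the coefficient is free, and this is exactly the generic type~(i) case at arbitrary $a$. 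A concrete counterexample: take $n=2$, $k=1$, and $\J$ maximal with $J_i=\{i+1\}$. Then $C_\J$ has the three parameters $(1,2,0),(1,3,0),(1,4,0)$; their $\iota$-images are $(3,4,0),(2,4,0),(1,4,0)$, of which only the last is valid. Your scheme declares the first two ``forced to vanish'' and returns $\dim C_\J^{sp}=1$, but by Proposition~\ref{k1case} we have $X(1,4)^{sp}=X(1,4)$, so $\dim C_\J^{sp}=3$. So the matching ``$\iota$-fixed $\leftrightarrow$ type~(i), both-valid pairs $\leftrightarrow$ type~(ii)'' is false, and this is exactly the step you yourself flag as ``the main obstacle'' without supplying a proof. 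The correct statement would have to treat separately the triples that appear in no linearized equation (these are free and give further type~(i) mutations), the triples whose partner exists and is a valid mutation (one free parameter per pair, the type~(ii) mutations of Lemma~\ref{problemslemma}), and the triples whose partner coefficient is identically zero in the chart (these get killed). Also note that $\widetilde{y}<j$ in your linear equation should read $\widetilde{y}>j$ for $c_{j,\widetilde{y},a}$ to index a perturbation.

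A secondary gap: identifying $\dim T_{p_\J}C_\J^{sp}$ with the dimension of the solution set of the linearized system presupposes that the scheme-theoretic intersection $C_\J\cap X(k,2n)^{sp}$ is reduced (equivalently smooth) at $p_\J$; the tangent space of the reduced orbit $G^{sp}\cdot p_\J$ is a priori only contained in the solution space of the linearization, which thus gives only an upper bound for $\dim C_\J^{sp}$. Your appeal to ``absorbing higher-order terms'' is plausible and in line with the computations of \cite[Theorem~4.13]{Pue2022}, but to make the argument complete one would need to actually carry out the row reduction and show the full (nonlinear) isotropy locus in the cell chart is an affine space of the expected dimension, or else supply the matching lower bound directly, for instance via the explicit curves $V(t)$ of the corollary following Lemma~\ref{problemslemma} as the paper does.
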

\begin{proof}
In \cite[Corollary~6.5, Theorem~6.15]{LP23a}, we prove that $\dim_\bC C_\J$ equals the number of mutations starting at $S_\J$, that is, mutations of the form $\mu \colon S \to S_\J$. Every mutation is linked to one so-called terminal parameter in the equations describing the cells (see \cite[(5.9), (5.10)]{LP23a}). Now, Lemma~\ref{problemslemma} implies that these parameters are only independent if they belong to different symplectic mutations. Clearly, every symplectic mutation allows to choose one independent parameter. This implies the desired dimension formula.
\end{proof}
\begin{rem}
In particular, every mutation starting at $S_\J$ is either symplectic or part of a symplectic pair as described in Lemma~\ref{problemslemma}.
\end{rem}
\subsection{Top dimensional cells}\label{sec:top-dim-cells}

\begin{prop}\label{symplecticdim}
    The dimension of any top dimensional cell in $X(k,2n)^{sp}$ is $k(2n-k) - \frac{k(k-1)}{2}$.
\end{prop}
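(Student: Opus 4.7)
The plan is to compute $\dim C_\J^{sp}$ for a single top-dimensional cell using Lemma~\ref{lem:symplectic-mutations}, which expresses this dimension as the number of symplectic mutations starting at $S_\J$. By Theorem~\ref{thm:top-dim-cells} all top-dimensional cells of $X(k,2n)^{sp}$ have the same dimension, so it suffices to fix one convenient maximal symplectic pattern. I would take the \emph{standard} pattern $\J_{\max}$ defined by $(J_{\max})_i=\{i+1,\dots,i+k\}\pmod{2n}$, which is easily seen to be maximal (the condition $2n\in J_i\Rightarrow 1\in J_{i+1}$ holds) and symplectic for $k\le n$ (the block $\{i+1,\dots,i+k\}$ is disjoint from its antidiagonal image $\{i-k+1,\dots,i\}$). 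Its successor-closed subquiver $S_{\J_{\max}}$ decomposes transparently as the union of $k$ fully active strands (indexed by starting vertices in $A=\{2n-k+1,\dots,2n\}$) together with $2n-k$ empty strands (indexed by $I=\{1,\dots,2n-k\}$).

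Because of this uniform structure, successor-closedness forces every mutation $\mu\colon S_{\J_{\max}}\to S$ to remove an initial predecessor-closed segment of some active strand $a\in A$ and to paste the shifted copy at the tail of the inactive strand $b=a-s\pmod{2n}\in I$, with the shift $s$ and segment length $2n-s$ uniquely determined by the pair $(a,b)$. Hence the mutations starting at $S_{\J_{\max}}$ are in canonical bijection with $A\times I$, recovering $\dim C_{\J_{\max}}=k(2n-k)$ for the ambient variety $X(k,2n)$.

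To count symplectic mutations, I would analyze for which pairs $(a,b)$ the target $\J'$ remains symplectic (type~1) versus becomes non-symplectic and must be paired via Lemma~\ref{problemslemma} (type~2). The key observation is that the mutation $(a,b)$ modifies $J_i$ at $2n-s$ vertices by the substitution $j_R(i)=i-a+1\mapsto j_A(i)=i-b+1$, and symplecticity can fail only when this substitution introduces a forbidden antidiagonal pair $\{j,2n+1-j\}$ either within some $J'_i$ (at the self-paired vertices $i=0,n$) or across $J'_i$ and $J'_{-i}$. The natural involution $(a,b)\mapsto(1-b,1-a)\pmod{2n}$ on $A\times I$ (which restricts to $A\times I$ only when $b\in\{1,\dots,k\}$) captures the relevant pairing: I expect the classification to yield $k(2n-2k)$ type~1 mutations with $b\in\{k+1,\dots,2n-k\}$ (\emph{singletons}, outside the range of the involution); $k$ type~1 mutations with $b\in\{1,\dots,k\}$ and $a+b\equiv 1\pmod{2n}$ (fixed points of the involution); and the remaining $k(k-1)$ mutations grouped into $\binom{k}{2}$ type~2 pairs. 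Summing, the total count of symplectic mutations is
\[
k(2n-2k)+k+\binom{k}{2}=k(2n-2k+1)+\frac{k(k-1)}{2}=k(2n-k)-\frac{k(k-1)}{2},
\]
which is the desired formula.

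The main obstacle will be the explicit classification in the third step: verifying that the substitutions $j_R(i)\mapsto j_A(i)$ preserve symplecticity exactly in the prescribed cases, and that the proposed involutive pairing does match the Lemma~\ref{problemslemma} pairing. This requires careful bookkeeping of the active range of each mutation and of the dual pair condition at $i=0$ and $i=n$. The formula can be sanity-checked in small cases (for $k=1$ it recovers $\dim X(1,2n)=2n-1$ of Proposition~\ref{k1case}; for $k=n$ it gives $\binom{n+1}{2}$, matching the Lagrangian Grassmannian; the cases $k=n=2$ and $k=2,n=3$ can be verified by inspection using Example~\ref{exple: symplectic 24}). As a backup, one could instead compute $\dim C^{sp}_{\J_{\max}}=\dim G^{sp}-\dim\Stab_{G^{sp}}(p_{\J_{\max}})$ using the explicit basis $\{y(a,b)\}$ of $\fg^{sp}$ from Definition~\ref{def:lie-generators}; this converts the problem into a direct linear-algebraic count of which $y(a,b)$ stabilize $p_{\J_{\max}}$.
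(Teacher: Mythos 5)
Your plan follows the paper's own proof quite closely: both rest on Lemma~\ref{lem:symplectic-mutations} and count symplectic mutations starting at $S_\J$ strand by strand, distinguishing which of the $2n-k$ mutations per active strand remain symplectic and pairing the remaining ones via Lemma~\ref{problemslemma}. The paper argues for an arbitrary maximal symplectic $\J$, while you specialize to the concrete pattern $(J_{\max})_i=\{i+1,\dots,i+k\}$; your decomposition $k(2n-2k)+k+\binom{k}{2}$ is a harmless regrouping of the paper's $k(2n-2k+1)+\binom{k}{2}$.

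Two points to address. First, there is a circularity: you invoke Theorem~\ref{thm:top-dim-cells} to claim that all top-dimensional cells of $X(k,2n)^{sp}$ have the same dimension, but the proof of Theorem~\ref{thm:top-dim-cells} in turn cites Proposition~\ref{symplecticdim}. To avoid this, argue (as the paper implicitly does) that your classification of symplectic versus non-symplectic mutations is the same for every maximal symplectic $\J$, so the count is uniform, and then that these are indeed the cells of maximal dimension in $X(k,2n)^{sp}$ by the poset structure from Lemma~\ref{problemslemma}. Second, the crucial step — that among the $2n-k$ mutations at each strand exactly $k-1$ break symplecticity and that the remaining $k(k-1)$ non-symplectic moves pair up into $\binom{k}{2}$ symplectic pairs as in Lemma~\ref{problemslemma} — is the heart of the argument, and your plan correctly flags it as still requiring the explicit bookkeeping; the paper is equally terse at precisely this point, so completing it would strengthen both. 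The alternative stabilizer computation you mention via the basis $y(a,b)$ of $\fg^{\sigma_\fg}$ would be a genuinely different route, but as stated it is only a backup, not carried out.
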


\begin{proof}
By \cite[Theorem~3]{FLP22} every top dimensional cell $C_\J$ of $X(k,2n)$ is of dimension $k(2n-k)$. This equals the number of mutations starting at $S_\J$ by \cite[Corollary~6.5, Theorem~6.15]{LP23a}. To apply Lemma~\ref{lem:symplectic-mutations}, we count the symplectic mutations starting at $S_\J$. At each of the $k$-many segments in $S_\J$ of length $2n$, there start $2n-k$-many mutations. But $k-1$ of them make $\J$ non-symplectic. They have to be paired with their correction move as described in Lemma~\ref{problemslemma}. In total there start $k(2n-k)-k(k-1)$ single mutations at $S_\J$ which are symplectic and $\frac{k(k-1)}{2}$ symplectic pairs as described in Lemma~\ref{problemslemma}. This sums up to the desired formula.
\end{proof}

%\begin{cor}
%    For any $n \ge 1$, $X(1,2n)^{sp} = X(1,2n)$.
%\end{cor}
\begin{prop}\label{prop:number-top-dim-cells}
    The number of top dimensional cells in $X(k,2n)^{sp}$ is \[ \frac{(2n)!!}{k! (2n-2k)!!}.\]
\end{prop}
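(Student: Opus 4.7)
The plan is to identify the top-dimensional cells of $X(k,2n)^{sp}$ with the maximal symplectic $(k,2n)$-juggling patterns, and then count those patterns by a direct combinatorial argument.

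First, I would reduce to counting maximal elements of $JP(k,2n)^{sp}$. By Proposition~\ref{symplecticdim}, every top-dimensional cell of $X(k,2n)^{sp}$ has dimension $k(2n-k)-\binom{k}{2}$, and the proof there exhibits this dimension precisely for maximal $\J$. On the other hand, for non-maximal $\J$ one has $\dim C_\J < k(2n-k)$ in $X(k,2n)$ by \cite[Theorem~3]{FLP22}, so the symplectic sub-cell $C_\J^{sp}$ cannot be top-dimensional. Hence top-dimensional symplectic cells are in bijection with maximal $\J \in JP(k,2n)^{sp}$.

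Next I would use the classical description of maximal juggling patterns via the coefficient quiver. A maximal $(k,2n)$-juggling pattern, viewed as a successor-closed subquiver of $Q(U_{[2n]},B)$, is a disjoint union of exactly $k$ length-$2n$ strands, each of which must start at a vertex of the form $e_1^{(i)}$ with $i \in \bZ_{2n}$. Such strands are automatically pairwise disjoint whenever they have distinct starting vertices, and together they fill the required $k\cdot 2n$ basis vectors. Consequently the assignment $\J \mapsto J_0$ is a bijection between maximal $(k,2n)$-juggling patterns and $\binom{[2n]}{k}$. By the Remark preceding Proposition~\ref{k1case}, such a maximal $\J$ is symplectic if and only if $J_0$ is symplectic.

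Finally I would count symplectic $k$-subsets of $[2n]$: a subset $J_0$ is symplectic precisely when no two of its elements sum to $2n+1$. Since $[2n]$ decomposes into the $n$ antipodal pairs $\{j,\,2n+1-j\}$ for $j=1,\dots,n$, a symplectic $k$-subset is obtained by choosing $k$ of these pairs and one element from each chosen pair, yielding $\binom{n}{k}\cdot 2^k$ patterns. Rewriting $(2n)!! = 2^n\, n!$ and $(2n-2k)!! = 2^{n-k}(n-k)!$ identifies this with $\frac{(2n)!!}{k!\,(2n-2k)!!}$, as claimed. The only step I expect to require care is the first one, namely excluding non-maximal symplectic juggling patterns from contributing top-dimensional cells; once this is pinned down, the bijection and the final count are routine.
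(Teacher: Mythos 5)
The enumeration in the second and third paragraphs is correct and essentially matches the paper: you reduce to counting $k$-subsets of $[2n]$ containing no antipodal pair $\{i,\tilde i\}$ and obtain $2^k\binom{n}{k}=\frac{(2n)!!}{k!(2n-2k)!!}$. The bijection between maximal juggling patterns and $\binom{[2n]}{k}$ via $\J\mapsto J_0$, and the invocation of the Remark before Proposition~\ref{k1case}, are also fine.

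The genuine gap is in your first step. You argue that for non-maximal $\J$ one has $\dim C_\J < k(2n-k)$ and therefore ``$C_\J^{sp}$ cannot be top-dimensional.'' This does not follow. The codimension of $C_\J^{sp}$ inside $C_\J$ is not uniformly $\binom{k}{2}$: for the minimal symplectic $\J$ one has $C_\J = C_\J^{sp}$ (a point, codimension zero), and in general the number of ``problematic'' mutations that must be paired depends on $\J$. So knowing merely that $\dim C_\J$ drops below $k(2n-k)$ does not put $\dim C_\J^{sp}$ below $k(2n-k)-\binom{k}{2}$. What is actually needed, and what the paper supplies, is Lemma~\ref{problemslemma} together with its Corollary: for any non-maximal symplectic $\J$ there exists a symplectic $\J''>\J$ with $p_\J\in\overline{C_{\J''}^{sp}}$, hence $\dim C_\J^{sp}<\dim C_{\J''}^{sp}$. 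Iterating, the cells of maximal dimension in $X(k,2n)^{sp}$ are exactly those indexed by maximal symplectic patterns. You flagged this step as the one requiring care, and indeed it is the one that cannot be dispatched by the dimension bound you cite; it needs the closure relation coming from the symplectic mutation analysis.
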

\begin{proof}
By Lemma~\ref{problemslemma}, the top dimensional cells $C_\J^{sp}$ in $X(k,2n)^{2p}$ are the intersection of the top dimensional cells $C_\J\subset X(k,2n)$ with $X(k,2n)^{sp}$, where $\J$ is symplectic. Their number is given by the number of cardinality $k$ subsets of $[2n]$ that contain no pair $(i, \Tilde{i})$. There are $2^k \cdot \binom{n}{k} = \frac{(2n)!!}{k! (2n-2k)!!}$ such sets.
\end{proof}
\begin{thm}\label{thm:top-dim-cells}
The intersection of a cell of $X(k,2n)$ with $X(k,2n)^{sp}$
is either empty or an affine cell. The top dimensional cells of $X(k,2n)^{sp}$ are obtained from top dimensional cells of $X(k,2n)$.
The dimension of $X(k,2n)^{sp}$ is equal to the dimension of the classical symplectic Grassmannian $\mathrm{Gr}(k,2n)^{sp}$ and the number of
irreducible components of $X(k,2n)^{sp}$ is equal to the Euler characteristic of $\mathrm{Gr}(k,2n)^{sp}$.
\end{thm}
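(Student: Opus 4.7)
The plan is to assemble the pieces already established in Sections~\ref{sec:automorphisms} and~\ref{sec:orbits} rather than introduce new arguments. For the first assertion, I would start from the $G^{sp}$-orbit decomposition
\[
X(k,2n)^{sp}=\bigsqcup_{\J\in JP(k,2n)^{sp}} G^{sp}\cdot p_\J
\]
supplied by Theorem~\ref{thm:group-orbits-are-cells}. If $\J\in JP(k,2n)^{sp}$, Proposition~\ref{goodcells} identifies $C_\J\cap X(k,2n)^{sp}$ with $G^{sp}\cdot p_\J$, which is an affine cell by Theorem~\ref{thm:group-orbits-are-cells}. If $\J\notin JP(k,2n)^{sp}$, any hypothetical $V\in C_\J\cap X(k,2n)^{sp}$ would lie in some $G^{sp}\cdot p_{\J'}\subseteq C_{\J'}$ with $\J'$ symplectic, forcing $C_\J\cap C_{\J'}\neq\emptyset$ and hence $\J=\J'$ (since distinct $G$-orbits in $X(k,2n)$ are disjoint), a contradiction; so the intersection is empty.

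For the second and third assertions, I would combine Proposition~\ref{symplecticdim}, which says that every top-dimensional cell of $X(k,2n)^{sp}$ has the same dimension $k(2n-k)-\tfrac{k(k-1)}{2}$, with the observation inside the proof of Proposition~\ref{prop:number-top-dim-cells} that these top cells are precisely the intersections $C_\J\cap X(k,2n)^{sp}$ for $\J$ symplectic and $C_\J$ top-dimensional in $X(k,2n)$. This yields both that top cells of $X(k,2n)^{sp}$ arise from top cells of $X(k,2n)$ and the equality
\[
\dim X(k,2n)^{sp}=k(2n-k)-\tfrac{k(k-1)}{2}=\dim \mathrm{Gr}(k,2n)^{sp}.
\]

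For the last assertion, I would use that the cells $C_\J^{sp}$ form an affine paving of the projective variety $X(k,2n)^{sp}$ which is equidimensional at the top level by Proposition~\ref{symplecticdim}. In such a paving the irreducible components are exactly the closures of the top-dimensional cells: closures of cells are unions of cells, closures of affine spaces are irreducible of the expected dimension, and any irreducible closed subvariety of maximal dimension must therefore contain and equal the closure of some top cell. Proposition~\ref{prop:number-top-dim-cells} then furnishes the count
\[
\#\{\text{irreducible components of }X(k,2n)^{sp}\}=\tfrac{(2n)!!}{k!(2n-2k)!!}=2^k\binom{n}{k},
\]
which agrees with $\chi(\mathrm{Gr}(k,2n)^{sp})=|W/W_P|$ for the Weyl group $W$ of type $C_n$ and the parabolic $W_P$ associated with $P_k\subset Sp_{2n}$ (via the standard computation $|W|=2^n n!$ and $|W_P|=k!\,2^{n-k}(n-k)!$). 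No single step is delicate; the main obstacle I anticipate is articulating cleanly the bijection between top-dimensional cells and irreducible components of a projective variety with an affine paving, and verifying the elementary Weyl-group calculation identifying $\chi(Sp_{2n}/P_k)$ with $2^k\binom{n}{k}$.
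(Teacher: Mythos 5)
Your first three assertions are handled essentially as in the paper, and your indirect argument for emptiness of the intersection when $\J$ is non-symplectic (via disjointness of $G$-orbits and the orbit decomposition of Theorem~\ref{thm:group-orbits-are-cells}) is a clean alternative to the paper's explicit pairing computation; both work.

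There is, however, a genuine gap in your argument for the irreducible components. You assert that in a projective variety with an affine paving, the irreducible components are \emph{exactly} the closures of the top-dimensional cells, but the reasoning you give only shows that the top-dimensional irreducible components arise this way. A priori, $X(k,2n)^{sp}$ could have irreducible components of strictly smaller dimension: take, for instance, a union $\mathbb{P}^2\cup\mathbb{P}^1$ meeting in a point, which has an affine paving with a single top cell but two irreducible components. To rule this out here you must show that every $G^{sp}$-orbit lies in the closure of some top-dimensional one, and this is precisely what Lemma~\ref{problemslemma} (together with the Corollary following it, which exhibits an explicit one-parameter degeneration realizing the symplectic mutation in $X(k,2n)^{sp}$) provides: starting from any non-maximal symplectic juggling pattern one can always apply a symplectic mutation upward, so by induction every symplectic cell is contained in the closure of a maximal one. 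The paper's proof invokes Lemma~\ref{problemslemma} for exactly this purpose; your proposal flags the bijection as ``the main obstacle'' but does not supply the missing step. Once that is added, the rest of your proposal (Proposition~\ref{symplecticdim} for the dimension, Proposition~\ref{prop:number-top-dim-cells} plus the $|W/W_P|$ count for the Euler characteristic) goes through.
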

\begin{proof}In Theorem~\ref{thm:group-orbits-are-cells} the intersection of the cell $C_\J$ with the symplectic subvariety is described as the $G^{sp}$-orbit of the point $p_\J$ corresponding to a symplectic juggling pattern $\J$. In particular, it is an affine cell. If $\J$ is not symplectic, then $\J\not\subseteq R\J$ and hence there is at least one $i\in\bZ_{2n}$ and an $a\in [2n]$ such that $a\in J_i$ and $2n-a+1\in J_{-i}$. By the description of the $G$-orbits in $X(k,2n)$ from \cite{FLP22}, we know that every point $
(V_s)_s\in C_\J$  is such that the space 
$V_i$  contains a vector of the form   $v=e^{(i)}_a +\sum_{s=a+1}^{2n}c_s e^{(i)}_s$, while
$V_{-i}$ contains a vector of the form $w=e^{(-i)}_{2n+1-a}+ \sum_{s=2n+2-a}^{2n}d_s e^{(-i)}_s$ for some complex numbers $c_s, d_s \in\bC$. It is now clear that $(v,w)\neq 0$ and hence $(V_s)_s\not\in X(k,2n)^{sp}$.
For the maximal symplectic juggling patterns, the dimension of the corresponding cell is computed in Proposition~\ref{symplecticdim}. It equals the dimension of $\mathrm{Gr}(k,2n)^{sp}$.
The irreducible components are obtained as closure of the top dimensional cells by Lemma \ref{problemslemma}. Hence, their number matches the Euler characteristic of $\mathrm{Gr}(k,2n)^{sp}$ by Proposition~\ref{prop:number-top-dim-cells} (see \cite{B03} where a similar property holds in a more general setup).
\end{proof}

\section{Affine Grassmannians and flag varieties}
\label{sec:affine}
\subsection{The type A case} Here we briefly recall some background on type A affine  Grassmannians and flag varieties (more details can be found, for example, in \cite{Pue2022} or in \cite{Kum02} in much greater generality). 
Let us fix a positive integer number $N$.
Let $V$ be an $N$-dimensional complex vector space with a basis $v_1,\dots,v_N$. 
We consider the set of
lattices inside the space $V[t,t^{-1}]=V\otimes \bC[t,t^{-1}]$, i.e  $t$-invariant subspaces $L$ such that for some $m\in \bZ_{>0}$ and $c\in \bZ$ one has  
\begin{equation}\label{eqn: lattices}
    t^m V[t]\subset L \subset t^{-m}V[t],\qquad  \dim L/ t^m V[t] = mN  + c.
\end{equation}
We denote by $\mathcal{AG}_{N,c}$ the set of lattices satisfying \eqref{eqn: lattices} for a fixed $c\in\bZ$.
For example, one has a distinguished lattice $\mathring{L}_c\in \mathcal{AG}_{N,c}$ defined as follows. Let us write $c=Nd+r$, where $d\in\bZ$ and $r=0,\dots,N-1$. Then
\begin{equation}
\mathring{L}_c = t^{-d}\left(V[t] \oplus \Span\{v_1t^{-1}, \dots, v_rt^{-1}\}\right).
\end{equation}
The affine Grassmannians $\mathcal{AG}_{N,c}$ are endowed with a structure of infinite-dimensional ind-varieties and the multiplication by $t^{-1}$ induces a natural isomorphism 
$\mathcal{AG}_{N,c}\simeq \mathcal{AG}_{N,c+N}$. These ind-varieties are equipped with a transitive action of the affine Kac-Moody Lie group $\widehat{SL}_N$. 
One has $\mathcal{AG}_{N,c}\simeq \widehat{SL}_N/P_c$ for $c=0,\dots,N-1$, where 
$P_c$ is the parahoric subgroup corresponding to the fundamental weight $\Lambda_c$. The disjoint union $\sqcup_{c=0}^{N-1} \mathcal{AG}_{N,c}$ is identified with the affine Grassmannian for the affine $GL_N$ group. 

The affine flag variety $\mathcal{AF}_N$ sits inside the product $\prod_{c\in \bZ} \mathcal{AG}_{N,c}$ and consists of collections $(L_c)_{c\in\bZ}$ such that $L_c\subset L_{c+1}$
and $L_{c+N}=t^{-1}L_c$. The ind-variety $\mathcal{AF}_N$ is isomorphic to the quotient
$\widehat{SL}_N/\mathbb{I}$, where $\mathbb{I}$ is the Iwahori subgroup defined
as the preimage of the Borel subgroup $B\subset SL_N$ under the $t=0$ evaluation map $SL_N[t]\to SL_N$.  
We note that $\mathcal{AF}_N$ contains the distinguished point $\mathring{L}=(\mathring{L}_c)_{c\in\bZ}$.

\subsection{Symplectic version} 
Assume that $N=2n$ is even. We endow the space $V$ with a skew-symmetric nondegenerate bilinear form defined by $(v_i,v_{j})=(-1)^{i+1}\delta_{i+j,N+1}$.
The form induces a skew-symmetric nondegenerate form on $V[t,t^{-1}]$ given by 
$(vt^r, ut^s) = (v,u)\delta_{r+s,-1}$. For example, the lattice $\mathring{L}_0$
is Lagrangian with respect to this form.

The symplectic affine flag variety $\mathcal{AF}^{{\mathfrak{sp}}}_{2n}$ is the subvariety
of the type $A$ affine flag variety $\mathcal{AF}_{2n}$ defined by
\begin{equation}
(L_c)_{c\in\bZ}\in \mathcal{AF}^{{\mathfrak{sp}}}_{2n}\ \text{ if }\ 
(L_c)_{c\in\bZ}\in \mathcal{AF}_{2n} \text{ and }  L_{-c} = L_c^{\perp}.
\end{equation}
In other words, if we consider a degree two automorphism $\sigma$ of 
$\mathcal{AF}_{2n}$ defined by $\sigma.(L_c)_{c\in\bZ} = (L_{-c}^\perp)_{c\in\bZ}$, then $\mathcal{AF}^{{sp}}_{2n}$ is the 
set of $\sigma$-fixed points. 
Note that the symplectic affine flag variety contains the distinguished point $\mathring{L}=(\mathring{L}_c)_{c\in\bZ}$. 

Let $Sp_{2n}\subset SL_{2n}$ be the Lie group preserving the above defined skew-symmetric form on $V$.
Let $B^{sp} \coloneqq B \cap Sp_{2n}$ be the Borel subgroup of $Sp_{2n}$, consisting of symplectic upper triangular matrices (or rather, one needs the intersection with a Borel subgroup closed under the involution on $SL_{2n}$ given by the chosen symplectic form). We denote by $\widehat{Sp}_{2n}\subset \widehat{SL}_{2n}$ the symplectic affine group. As in type $A$, the Iwahori subgroup $\mathbb{I}^{sp}$ is the preimage of the Borel subgroup $B^{sp}\subset Sp_{2n}$ under the evaluation map $Sp_{2n}[t]\to Sp_{2n}$, $t\mapsto 0$. 
Then $\mathcal{AF}^{sp}_{2n}\simeq \widehat{Sp}_{2n}/\mathbb{I}^{sp}$.

\subsection{Embeddings of quiver Grassmannians}
%Let $N=2n$.
%We first recall the embedding of $X(k,2n)$ into $\mathcal{AF}_{2n}$ (see \cite[Section~6.1]{FLP23a}).

As usual, for $j\in [2n]$, we denote by $e_j$ the $j$-th standard basis vector of $\bC^{2n}$; moreover, whenever we identify $\bC^{2n}$ with with $U_{[2n]}^{(i)}$, that is the $i$-th space of our $\Delta_{2n}$-representation $U_{[2n]}$, we will use the notation $e_1^{(i)}, \ldots, e_{2n}^{(i)}$ for the standard basis vectors.
%Let $W$ be a $2n$ dimensional vector space with a fixed basis $w_1,\dots,w_{2n}$.
%We consider a cyclic quiver $\Delta_{2n}$ with $2n$  vertices labeled by numbers from $0$ to $2n-1$  and a $\Delta_{2n}$-representation ${\bf  W}=(W^{(i)})_{i=0}^{2n-1}$ such that 
%all spaces $W^{(i)}$ are identified with $W$ and the maps $W^{(i)}\to W^{(i+1)}$ are induced by the map $\tau_1:W\to W$ such that $\tau_1w_i=w_{i+1}$ for $i=1,\dots,2n-1$ and $\tau_1w_{2n}=0$. 
%In particular, $\dim {\bf W} = (2n,\dots,2n)$. 
\smallskip

\noindent For $k=1,\dots,n$ consider the juggling variety $X(k,2n)$.
We recall the embedding  $\varphi: X(k,2n) \to \mathcal{AF}_{2n}$ from \cite[Section~6.1]{FLP23a}. Given a point $U=(U_i)_{i=0}^{2n-1}$ we describe the components 
$(\varphi U)_c$ of the point $\varphi U\in \mathcal{AF}_{2n}$ below. 

Let us define a family $\eta_{j,d}$ of embeddings $\bC^{2n}\to V[t,t^{-1}]$.
The embeddings are labeled by $d\in\bZ$, $j\in[2n]$;  $\eta_{j,d}$ is defined by 
\begin{align*}
e_{2n} \mapsto  v_jt^d,\ \ \,
e_{2n-1} &\mapsto  v_{j+1}t^d, \dots , \ e_{j} \mapsto  v_{2n}t^d, \\
e_{j-1} \mapsto  v_1t^{d-1}, \
e_{j-2} &\mapsto  v_{2}t^{d-1}, \dots, \, 
e_1 \mapsto  v_{j-1}t^{d-1}.
\end{align*}
%\begin{multline} w_{2n} \mapsto  v_it^d,\ w_{2n-1} \mapsto  v_{i+1}t^d, \dots w_{i} \mapsto  v_{2n}t^d, \\  w_{i-1} \mapsto  v_1t^{d-1}, \ w_{i-2} \mapsto  v_{2}t^{d-1}, \dots, w_1 \mapsto  v_{i-1}t^{d-1}. \end{multline}
Now we define $\varphi: X(k,2n) \to \mathcal{AF}_{2n}$ in the following way:
(recall that  $\mathring{L}_{-n}=tV[t]\oplus\Span\{v_1t^0,\dots,v_nt^0\}$):
\begin{gather*}
(\varphi U)_0 = \mathring{L}_{-n}\oplus  \eta_{n+1,0} U_0,\\
(\varphi U)_1 = \mathring{L}_{-n+1}\oplus 
\eta_{n+2,0} U_1,\\
\dots\dots\dots\dots\dots\dots\\
(\varphi U)_{n-1} = \mathring{L}_{-1}\oplus 
\eta_{2n,0} U_{n-1},\\
(\varphi U)_n = \mathring{L}_{0}\oplus 
\eta_{1,-1} U_{n},\\
\dots\dots\dots\dots\dots\dots\\
(\varphi U)_{2n-1} = \mathring{L}_{n-1}\oplus 
\eta_{n,-1} U_{2n-1}.
\end{gather*}
We also set $(\varphi U)_{c+2n} = t^{-1} (\varphi U)_{c}$.

\begin{rem}
One easily sees (see \cite[Lemmas~6.2 and 6.4]{FLP23a}) that  $(\varphi U)_c\in \mathcal{AG}_{2n,c}$ and that $(U_i)_i\in X(k,N)$ implies $\varphi(U)\in\mathcal{AF}_{2n}$.   
\end{rem}

\begin{example}
Let us take the point $(U_i)_{i=0}^{2n-1}\in X(k,2n)$ given by $U_i=\Span\{e^{(i)}_{2n-k+1},\dots,e^{(i)}_{2n}\}$ for all $i\in\bZ_{2n}$. Then $\varphi U = \mathring{L}$.    
\end{example}

Now we are ready to formulate the main result of this section.
\begin{prop}
For any $k=1,\dots,n$ the image $\varphi(X(k,2n)^{sp})$ belongs to 
$\mathcal{AF}^{sp}_{2n}$ seen inside $\mathcal{AF}_{2n}$.
\end{prop}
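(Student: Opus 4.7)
The plan is to verify the defining symplectic relation $(\varphi U)_{-c}=(\varphi U)_c^\perp$ by direct calculation, exploiting that it already holds for the ``base'' chain $\mathring{L}$ and that the remaining contributions are controlled by the isotropy of $U$.

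First, multiplication by $t$ is self-adjoint with respect to the form on $V[t,t^{-1}]$: $(tvt^r,ut^s)=(v,u)\delta_{r+s,-2}=(vt^r,tut^s)$, so $(t^{-1}L)^\perp=t\cdot L^\perp$. Combined with the periodicity $(\varphi U)_{c+2n}=t^{-1}(\varphi U)_c$, this shows that the orthogonality condition propagates under the $2n$-shift, so it suffices to verify it on a finite window of values of $c$. Second, a direct basis check yields the elementary identity $\mathring{L}_a^\perp=\mathring{L}_{-a}$ for all $a\in\bZ$: the form pairs $v_jt^s$ nontrivially only with $v_{2n+1-j}t^{-s-1}$, and membership in $\mathring{L}_a$ versus $\mathring{L}_{-a}$ are complementary under the involution $(j,s)\mapsto(2n+1-j,-s-1)$.

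Third, the key geometric input is a compatibility lemma for the embeddings $\eta_{j,d}$: for the specific pairs of indices that appear in $\varphi U$ at positions $c$ and $-c$, the pullback of the form on $V[t,t^{-1}]$ via $(\eta_{j(c),d(c)},\eta_{j(-c),d(-c)})$ is a nonzero scalar multiple of the symplectic form $\Omega$ on $\bC^{2n}$. This is a short sign computation from the explicit formula for $\eta_{j,d}$. Given these tools, for each $c$ in the finite window one decomposes
\[
(\varphi U)_c^\perp=\bigl(\mathring{L}_{c-n}+\eta_{j(c),d(c)}U_c\bigr)^\perp=\mathring{L}_{n-c}\cap\bigl(\eta_{j(c),d(c)}U_c\bigr)^\perp,
\]
and checks that $(\varphi U)_{-c}$ sits inside the right-hand side: the inclusions $\mathring{L}_{-c-n}\subseteq\mathring{L}_{n-c}$ and $\eta_{j(-c),d(-c)}U_{-c}\subseteq\mathring{L}_{n-c}$ are immediate, while the pairing $\eta U_{-c}\perp\eta U_c$ reduces via the compatibility lemma to $(U_c,U_{-c})_{\Omega}=0$, i.e.\ precisely the isotropy defining $X(k,2n)^{sp}$. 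Equality then follows by matching indices of the two lattices in the relevant component of the affine Grassmannian.

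The main obstacle I foresee is the bookkeeping across the boundary $c=n$, where the formula for $(\varphi U)_c$ switches from $\eta_{n+1+c,0}$ to $\eta_{c-n+1,-1}$: one has to correctly identify which pair of $\eta$'s realises the $\Omega$-pairing at each boundary, track the resulting sign (which depends on the parity of $n$ and on the relevant subscripts), and verify that the indexing of the affine Grassmannian components agrees on both sides after invoking the shift $t\cdot\eta_{j,d}=\eta_{j,d+1}$.
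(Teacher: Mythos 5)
Your proposal is correct and follows essentially the same route as the paper's proof: both reduce the orthogonality check to a finite window of indices $c$ via the $t$-periodicity of the lattice chain, both rely on the identity $\mathring{L}_a^\perp=\mathring{L}_{-a}$ for the base lattice (the paper verifies the required special cases by hand rather than stating it as a lemma), and both finish by observing that the residue pairing restricted to the images of the two relevant embeddings $\eta$ is a nonzero scalar multiple of $\Omega$, so that the isotropy $(U_c,U_{-c})_\Omega=0$ transfers. The one small respect in which your write-up is slightly more careful than the paper's is that the paper only explicitly proves the containment $(\varphi U)_c\subseteq(\varphi U)_{-c}^\perp$, whereas you observe that equality — which is what the definition of $\mathcal{AF}^{sp}_{2n}$ demands — follows by matching the indices $c$ of the two lattices in the affine Grassmannian (equivalently, a dimension count in $t^{-m}V[t]/t^mV[t]$); this is implicit in the paper but worth spelling out as you did. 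You correctly flag that the scalar in your compatibility lemma can depend on $c$ and on the parity of $n$; since only non-vanishing of the scalar matters for transferring isotropy, this is harmless.
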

\begin{proof}
Let us take a point $(U_i)_i\in X(k,2n)^{sp}$.
We need to show that the condition $(U_i, U_{-i})=0$ implies $(\varphi U)_j\subset (\varphi U)_{-j}^\perp$.
Let us start with $j=0$. 
By definition
\[
(\varphi U)_0 = \mathring{L}_{-n} \oplus \eta_{n+1,0} U_0. 
\]
Since $\mathring{L}_{-n}=tV[t]\oplus \Span\{v_1t^0,\dots,v_nt^0\}$ and the image of $\eta_{n+1,0}$ is the subspace
$\Span\{v_{n+1}t^{0},\dots,v_{2n}t^{0},v_1t^{-1},\dots,v_nt^{-1}\}$, we conclude that 
\[(\mathring{L}_{-n},\mathring{L}_{-n} \oplus \eta_{n+1,0} U_0)=0\]
(recall that the skew-symmetric residue pairing is given by  $(v_it^a,v_jt^b)=\delta_{a+b,-1}\delta_{i+j,2n+1}(-1)^{i+1}$). We are left to show that $(\eta_{n+1,0} U_0,\eta_{n+1,0} U_0)=0$.
By definition the map 
\[
\eta_{n+1,0}: W \to \Span\{v_{n+1}t^{0},\dots,v_{2n}t^{0},v_1t^{-1},\dots,v_nt^{-1}\}
\]
sends the form on $W$ to the restriction of the residue form 
to the image). Hence the condition $(U_0,U_0)=0$ implies 
$((\varphi U)_0,(\varphi U)_0)=0$.

\smallskip

Now let us take $i=1,\dots,n$ and let us prove that $((\varphi U)_i,(\varphi U)_{-i})=0$. 
It suffices to show that
\[
\left(\mathring{L}_{-n+i}\oplus \eta_{n+i+1,0}U_i, t(\mathring{L}_{n-i}\oplus \eta_{n-i+1,-1}(U_{2n-i}))\right)=0
\]
(recall $(\varphi U)_{-i}=t(\varphi U)_{2n-i}$). All the pairings except for the single term  
$(\eta_{n+i+1,0}U_i, t\eta_{n-i+1,-1}U_{-i})$ vanish by the definition of the residue pairing. The remaining term is zero due to 
the condition $(U_i,U_{-i})=0$.
\end{proof}

We conclude with the following theorem, which is proved exactly as in \cite{FLP22,FLP23a}.

\begin{thm}\label{thm:emb-affine-symp-flag}
The image of $X(k,2n)^{sp}$ inside $\mathcal{AF}_{2n}^{sp}$ is equal to a union of Schubert cells. 
The embedding $\varphi: X(k,2n)^{sp}\to \mathcal{AF}_{2n}^{sp}$ translates the action of the symplectic automorphism group on $X(k,2n)^{sp}$
into the action of the Iwahori sugroup $\mathbb{I}^{sp}$ on $\mathcal{AF}_{2n}^{sp}$.
\end{thm}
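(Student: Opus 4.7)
The plan is to follow the blueprint of the type $A$ proof in \cite{FLP22,FLP23a}, relying on what has already been established in this section together with Theorem~\ref{thm:group-orbits-are-cells}. Recall from those references that the embedding $\varphi: X(k,2n) \to \mathcal{AF}_{2n}$ is equivariant for the actions of $G = \Aut_{\Delta_{2n}}(U_{[2n]})$ and the Iwahori subgroup $\mathbb{I}$, via an explicit group homomorphism $\Phi: G \to \mathbb{I}$; moreover each $G$-orbit $C_\J$ is identified through $\varphi$ with the intersection of $\varphi(X(k,2n))$ with a single Schubert cell of $\mathcal{AF}_{2n}$, and $\varphi(p_\J)$ is a torus fixed point labelling that Schubert cell. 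Since the preceding proposition already places $\varphi(X(k,2n)^{sp})$ inside $\mathcal{AF}_{2n}^{sp}$, the task reduces to checking that the above picture restricts appropriately along the inclusions $G^{sp} \hookrightarrow G$ and $\mathbb{I}^{sp} \hookrightarrow \mathbb{I}$.

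The key step is to verify that $\Phi(G^{sp}) \subseteq \mathbb{I}^{sp}$. By the very definition of $G^{sp}$, an element $(g_i)_i$ satisfies $(g_i v, g_{-i} w) = (v,w)$ for all $v\in U_{[2n]}^{(i)}$, $w\in U_{[2n]}^{(-i)}$. The proof of the preceding proposition already showed that the maps $\eta_{j,d}$ transport the symplectic form on $U_{[2n]}^{(\pm i)}$ to the restriction of the residue pairing on the corresponding blocks of $(\varphi U)_{\pm i}$, and that the standard Lagrangian lattice pieces pair trivially with everything. Consequently $\Phi(g)$ preserves the residue form on $V[t,t^{-1}]$; being already in $\mathbb{I}$, it lies in $\mathbb{I}\cap \widehat{Sp}_{2n} = \mathbb{I}^{sp}$. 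This gives the second assertion of the theorem, namely that $\varphi$ intertwines the $G^{sp}$-action on $X(k,2n)^{sp}$ with the $\mathbb{I}^{sp}$-action on $\mathcal{AF}_{2n}^{sp}$.

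For the first assertion, combine the $G^{sp}$-orbit decomposition of $X(k,2n)^{sp}$ from Theorem~\ref{thm:group-orbits-are-cells} with the fact that $\mathbb{I}^{sp}$-orbits on $\mathcal{AF}_{2n}^{sp}\simeq \widehat{Sp}_{2n}/\mathbb{I}^{sp}$ are precisely its Schubert cells. The equivariance of $\varphi$ carries each $G^{sp}$-orbit $C_\J^{sp}$ into a single Schubert cell; to see that it \emph{fills out} the intersection of the Schubert cell with $\varphi(X(k,2n)^{sp})$, I would argue that $\varphi(p_\J)$ is a torus-fixed point of $\mathcal{AF}_{2n}^{sp}$, that distinct symplectic juggling patterns map to distinct such points, and that the type $A$ statement already identifies $\varphi(C_\J)$ with $\mathbb{I}\cdot \varphi(p_\J) \cap \varphi(X(k,2n))$; intersecting both sides with $\mathcal{AF}_{2n}^{sp}$ and using the previous paragraph yields the equality $\varphi(C_\J^{sp}) = \mathbb{I}^{sp}\cdot \varphi(p_\J) \cap \varphi(X(k,2n)^{sp})$.

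The main obstacle I anticipate is the sign bookkeeping needed to make the containment $\Phi(G^{sp})\subseteq \mathbb{I}^{sp}$ precise: the form on $U_{[2n]}$ uses the alternating signs $(-1)^{i+1}$ built into $\Omega$, while the residue pairing on $V[t,t^{-1}]$ carries its own sign conventions, and the maps $\eta_{j,d}$ involve a shift from $d=0$ to $d=-1$ past the middle vertex $i=n$. Once one verifies, block by block, that these conventions are compatible, the rest of the argument is a direct transport of the type $A$ statement through $\varphi$, and no new ingredients beyond Theorem~\ref{thm:group-orbits-are-cells} and the preceding proposition are needed.
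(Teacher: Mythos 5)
Your proposal follows the paper's intended route: the paper itself states only that the theorem ``is proved exactly as in [FLP22, FLP23a]'' and gives no further detail, and your plan is precisely to transfer the type~$A$ argument. The ingredients you identify are the right ones: the group homomorphism $\Phi \colon G \to \mathbb{I}$ making $\varphi$ equivariant restricts to $G^{sp} \to \mathbb{I}^{sp}$ because of the form-compatibility already established in the proposition preceding the theorem, and the type~$A$ identification of cells with Schubert cells is then intersected down to the symplectic subvariety. The two facts you lean on but do not spell out are (i) the ``folding'' statement that $\mathbb{I}\cdot\varphi(p_\J)\cap\mathcal{AF}^{sp}_{2n}=\mathbb{I}^{sp}\cdot\varphi(p_\J)$ for a $\sigma$-fixed point $\varphi(p_\J)$, and (ii) the converse of the preceding proposition, namely that a point of $C_\J$ whose image under $\varphi$ lies in $\mathcal{AF}^{sp}_{2n}$ is itself in $X(k,2n)^{sp}$ — without (ii) the identity $\varphi(C_\J)\cap\mathcal{AF}^{sp}_{2n}=\varphi(C_\J^{sp})$ is not yet justified, and without (i) you only get $\varphi(C_\J^{sp})=\mathbb{I}^{sp}\cdot\varphi(p_\J)\cap\varphi(X(k,2n)^{sp})$ rather than the full equality with the Schubert cell that the theorem asserts. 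Both are routine (the second is checked exactly as the forward implication, using that the $\eta_{j,d}$ transport the form isomorphically onto the relevant blocks), so the argument closes, but they should be stated to avoid the appearance of a tautology in your final display.
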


\subsection*{Acknowledgements} %The authors thank the Department of Mathematics of the University of Rome Tor Vergata where part of this work was carried out. M.L. acknowledges the MUR Excellence Department Project 2023--2027 awarded to the Department of Mathematics, University of Rome Tor Vergata CUP E83C18000100006, and the PRIN2022 CUP E53D23005550006.
E.F. was partially supported by the ISF grant 493/24.
M.L. acknowledges the MUR Excellence Department Project 2023--2027 awarded to the Department of Mathematics, University of Rome Tor Vergata CUP E83C18000100006, and the 
PRIN2022 CUP E53D23005550006.
A.P. was funded by the Deutsche Forschungsgemeinschaft (DFG, German Research Foundation) — SFB-TRR 358/1 2023 — 491392403. 

M.L and M.M. are members of the network INdAM-G.N.S.A.G.A.

\appendix
\section{Numerical data}
\label{sec:appendix}
In this section we provide the Euler characteristics $\chi_{k,2n}$ and $\chi_{k,2n}^{sp}$ of the varieties $X(k,2n)$ and $X(k,2n)^{sp}$ (the numbers of juggling patterns and of the symplectic juggling patterns) as well as the corresponding Poincar\'e polynomials $P_{k,2n}(t)$ and $P_{k,2n}^{sp}(t)$. For the symplectic setting we use Lemma~\ref{lem:symplectic-mutations}. %For the computation of the Poincar\'e polynomial in the symplectic setting we use Corollary~\ref{cor:symplectic-mutations}.

Let $n=1$. Then 
\[
\chi_{1,2} = \chi_{1,2}^{sp}= 3, \
P_{1,2}(t)=P_{1,2}^{sp}(t)=2t + 1.
\]

Let $n=2$. Then
\begin{gather*}
\chi_{1,4} = \chi_{1,4}^{sp} =  15,\\  
P_{1,4}(t) = P^{sp}_{1,4}(t) =  4t^3 + 6t^2 + 4t + 1,\\
\chi_{2,4} = 33,\quad  \chi_{2,4}^{sp} = 13,\\
P_{2,4}(t) = 6t^4 + 12t^3 + 10t^2 + 4t^1 + 1,\\
P_{2,4}^{sp}(t) = 4t^3 + 5t^2 + 3t^1 + 1. 
\end{gather*}                          
 
Let $n=3$. Then   
\begin{gather*}
\chi_{1,6} = \chi_{1,6}^{sp} =  63,\\  P_{1,6}(t) = P^{sp}_{1,6}(t) =  6t^5 + 15t^4 + 20t^3 + 15t^2 + 6t^1 + 1,\\
\chi_{2,6} = 473,\quad  \chi_{2,6}^{sp} = 293, \\
P_{2,6}(t) = 15t^8 + 60t^7 + 110t^6 + 120t^5 + 90t^4 + 50t^3 + 21t^2 + 6t^1 + 1,\\
P_{2,6}^{sp}(t) = 12t^7 + 47t^6 + 81t^5 + 77t^4 + 48t^3 + 21t^2 + 6t^1 + 1,\\
\chi_{3,6} = 883,\quad \chi_{3,6}^{sp} = 79, \\
P_{3,6}(t) =  20t^9 + 90t^8 + 180t^7 + 215t^6 + 180t^5 + 114t^4 + 56t^3 + 21t^2 + 6t^1 + 1,\\
P_{3,6}^{sp}(t) = 8t^6 + 18t^5 + 22t^4 + 17t^3 + 9t^2 + 4t^1 + 1.
\end{gather*}                          

Let $n=4$. Then   
\begin{gather*}
\chi_{1,8} = \chi_{1,8}^{sp} =  255,\\  P_{1,8}(t) = P^{sp}_{1,8}(t) =  8t^7 + 28t^6 + 56t^5 + 70t^4 + 56t^3 + 28t^2 + 8t^1 + 1,\\
\chi_{2,8} = 5281,\quad  \chi_{2,8}^{sp} = 4053, \\
P_{2,8}(t) = 28t^{12} + 168t^{11} + 476t^{10} + 840t^9 + 1050t^8 + 1008t^7 + 784t^6\\ + 504t^5 + 266t^4 + 112t^3 + 36t^2 + 8t^1 + 1,\\
P_{2,8}^{sp}(t) = 24t^{11} + 166t^{10} + 478t^9 + 798t^8 + 904t^7 + 759t^6 + 501t^5 + 266t^4\\ + 112t^3 + 36t^2 + 8t^1 + 1,%\\
\end{gather*} 
\begin{gather*}
\chi_{3,8} = 26799,\quad \chi_{3,8}^{sp} = 7507, \\
P_{3,8}(t) =  56t^{15} + 420t^{14} + 1400t^{13} + 2870t^{12} + 4200t^{11} + 4788t^{10} + 4480t^9\\ + 3542t^8 + 2408t^7 + 1420t^6 + 728t^5 + 322t^4 + 120t^3 + 36t^2 + 8t^1 + 1,\\
P_{3,8}^{sp}(t) = 33t^{12} + 251t^{11} + 757t^{10} + 1319t^9 + 1588t^8 + 1445t^7 +1042t^6\\ + 613t^5 + 297t^4 + 117t^3 + 36t^2 + 8t^1 + 1.%\\
\end{gather*} 
\begin{gather*}
\chi_{4,8} = 44929,\quad \chi_{4,8}^{sp} = 633, \\
P_{4,8}(t) =  70t^{16} + 560t^{15} + 1960t^{14} + 4200t^{13} + 6426t^{12}\\ + 7672t^{11} + 7532t^{10} + 6272t^9 + 4522t^8 + 2856t^7 + 1588t^6 + 776t^5 + 330t^4\\ + 120t^3 + 36t^2 + 8t^1 + 1,\\
P_{4,8}^{sp}(t) = 16t^{10} + 56t^9 + 106t^8 + 131t^7 + 121t^6 + 93t^5 + 59t^4\\ + 31t^3 + 14t^2 + 5t^1 + 1.
\end{gather*} 
%\begin{gather*}

%----------------------------------------------------------


\begin{thebibliography}{99}
%----------------------------------------------------------

%\bibitem[BB73]{Birula1973} A.~Bialynicki-Birula, \emph{Some theorems on actions of algebraic groups}, Annals of Mathematics, Second Series, \textbf{98} (1973), no. 3, 480--497.

%\bibitem[Bo94]{Bongartz1994} K.~Bongartz, \emph{Minimal singularities for representations of Dynkin quivers}, Comment. Math. Helvetici, Volume~\textbf{69} (1994), 575-611.

%\bibitem[Bo97]{Bongartz1997} K.~Bongartz, \emph{Grassmannians and varieties of modules}, Unpublished manuscript, 1997.

\bibitem[BF20]{BF20}
A.~Bigeni, E.~Feigin, Symmetric Dellac configurations. 
\emph{J. Integer Seq.} \textbf{23} (2020), no.~4, Art. 20.4.6, 32 pp.

\bibitem[BF19]{BF19}
A.~Bigeni, E.~Feigin,
Symmetric Dellac configurations and symplectic/orthogonal flag varieties. 
\emph{Linear Algebra Appl.} \textbf{573} (2019), 54--79.

\bibitem[BC22]{BC22}
M.~Boos, G.~Cerulli Irelli, \emph{On degenerations and extensions of symplectic
and orthogonal quiver representations}, arXiv:2106.08666.

\bibitem[BCFF24]{BCFF24}
M.~Boos, G.~ Cerulli Irelli, X.~ Fang, G.~Fourier, \emph{Linear degenerate symplectic flag varieties: symmetric degenerations and PBW locus}, arXiv:2405.02739.

\bibitem[B03]{B03} M.~Brion, \emph{Group completions via Hilbert schemes}, J. Algebraic Geom. 12 (2003), 605--626.


%\bibitem[CR08]{CaRe2008} P.~Caldero and M.~Reineke, \emph{On the quiver Grassmannian in the acyclic case}, J. Pure Appl. Algebra, Volume~\textbf{212} (2008), no. 11, 2369--2380.


%\bibitem[CI11]{Cerulli2011} G.~Cerulli Irelli, \emph{Quiver Grassmannians associated with string modules}, J. Algebraic Comb., \textbf{33} (2011), 259--276.

%\bibitem[CI20]{CI20}G.~Cerulli Irelli, \emph{Three lectures on quiver Grassmannians} in “Representation Theory and Beyond", Cont. Math. 758, ed. Jan Št'ovíček, Jan Trlifaj American Mathematical Soc., 2020, 57--88.

%\bibitem[CEFR21]{CEFR2018} G.~Cerulli Irelli, F.~Esposito, H.~Franzen, M.~Reineke, \emph{Cellular decomposition and algebraicity of cohomology for quiver Grassmannians}, Advances in Mathematics, Volume~\textbf{379} (2021), Paper No. 107544.

%\bibitem[CFR12]{CFR12} G.~Cerulli Irelli, E.~Feigin, M.~Reineke, \emph{Quiver Grassmannians and degenerate flag varieties}, Algebra Number Theory, \textbf{6} (2012), no. 1, 165--194.

\bibitem[CFR13]{CFR13} G.~Cerulli Irelli, E.~Feigin, M.~Reineke, \emph{Desingularisation  of quiver Grassmannians for Dynkin quivers}, Adv. Math. \textbf{245} (2013), 182--207.


%\bibitem[CFFFR17]{CFFFR17} G.~Cerulli Irelli, X.~Fang, E.~Feigin, G.~Fourier, M.~Reineke, \emph{Linear degenerations of flag varieties}, Mathematische Zeitschrift, \textbf{287} (2017), no. 1,  615--654.

%\bibitem[CFFFR20]{CFFFR20} G.~Cerulli Irelli, X.~Fang, E.~Feigin, G.~Fourier, M.~Reineke, \emph{Linear degenerations of flag varieties: partial flags, defining equations, and group actions}, Math. Z. 296 (2020), no. 1--2, 453--477.


%\bibitem[CHSW11]{CHSW11} D.~Cartwright, M.~H\"abich, B.~Sturmfels,  A.~Werner, \emph{Mustafin varieties}, Selecta Mathematica 17 (2011), no. 4, 757--793


%\bibitem[CL15]{CL15} G.~Cerulli Irelli, M.~Lanini, \emph{ Degenerate flag varieties of type A and C are Schubert varieties}, \emph{Int. Math. Res. Not.} (2015), no.~15, 6353--6374. 

\bibitem[DC]{DC}
C. De Concini, \emph{Symplectic standard tableaux}, Adv. Math. {\bf 34} (1979), no. 1, 1--27.

\bibitem[DW]{DW}
H. Derksen, J. Weyman,
\emph{Generalized quivers associated to reductive groups},
Colloq. Math. \textbf{94} (2002), no.2, 151--173.


\bibitem[FH]{FH}
W. Fulton, J. Harris, Representation Theory: a first course, Vol. 129, Springer Science \& Business Media, 2013.

\bibitem[FF13]{FF13} 
E. Feigin, M. Finkelberg, \emph{Degenerate flag varieties of type A: Frobenius splitting and BW theorem}, Math. Z. \textbf{275} (2013), no. 1--2, 55--77.

\bibitem[FFL14]{FFL14}
E.~Feigin, M.~Finkelberg, P.~Littelmann, \emph{Symplectic degenerate flag varieties},
Canad. J. Math. {\bf 66} (2014), no. 3, 1250--1286.

%\bibitem[FFR17]{FFR17} E. Feigin, M. Finkelberg, M. Reineke, \emph{Degenerate affine Grassmannians and loop quivers}, Kyoto J. Math., Volume~\textbf{57} (2017), no. 2, 445--474.

%\bibitem[FL06]{FL06} G.~Fourier, P.~Littelmann, \emph{Tensor product structure of affine Demazure modules and limit constructions,}  Nagoya Math. J. {\bf 182} (2006), 171--198.


\bibitem[FLP22]{FLP22}
E. Feigin, M. Lanini, A. P\"utz, \emph{Totally nonnegative Grassmannians, Grassmann necklaces and quiver Grassmannians}, Canad. J. Math. {\bf 75} (2023), no. 4, 1076--1109. %\url{https://doi.org/10.4153/S0008414X22000232}

\bibitem[FLP23a]{FLP23a} E.~Feigin,  M.~Lanini, A.~P\"utz, \emph{Generalized juggling patterns, quiver Grassmannians and affine flag varieties},  \url{http://arxiv.org/abs/2302.00304}. 

\bibitem[FLP23b]{FLP23b} E.~Feigin,  M.~Lanini, A.~P\"utz, 
\emph{Laumon parahoric local models via quiver Grassmannians},
arXiv:2307.00776. 


%\bibitem[GKM98]{GKM1998} M.~Goresky, R.~Kottwitz, R.~MacPherson, \emph{Equivariant cohomology, Koszul duality, and the localization theorem}, Invent. Math., \textbf{131} (1998), 25--83.

\bibitem[Ga01]{Ga01}
D. Gaitsgory, \emph{Construction of central elements in the affine Hecke algebra via nearby cycles}, Invent. Math. {\bf 144} (2001), no. 2, 253--280.

\bibitem[Go01]{Go01}
U.~G\"ortz, \emph{On the flatness of models of certain Shimura varieties of PEL-type}, Math. Ann. {\bf 321} (2001), 689--727.

\bibitem[Go03]{Go03}
U.~G\"ortz, \emph{On the Flatness of local models for the symplectic group}, Adv. Math. {\bf 176} (2003), no. 1, 89--115.

%\bibitem[H05]{H05}T.~Haines,\emph{Introduction to Shimura varieties with bad reduction of parahoric type}, Harmonic analysis, the trace formula, and Shimura varieties, 583 -- 642, Clay Math. Proc., 4, Amer. Math. Soc.,Providence, RI, 2005.


%\bibitem[H13]{H13} X.~He, \emph{Normality and Cohen–Macaulayness of local models of Shimura Varieties}, Duke Mathematical Journal 162 (2013), no. 13, 2509--2523.

%\bibitem[HN02]{HN02} T.~Haines, B.C.~Ng\^o, \emph{Nearby cycles for local models of some Shimura varieties}, Compositio Math. 133 (2002), no. 2, 117--150.

\bibitem[HR20]{HR20}
T.~Haines, T.~Richarz, \emph{Smoothness of Schubert varieties in twisted affine Grassmannians}, Duke Math. J. {\bf 169} (2020), no. 17, 3223–3260.


%\bibitem[HR21]{HR21}T.~Haines, T.~Richarz, \emph{The test function conjecture for parahoric local models}, J. Amer. Math. Soc. 34 (2021), no. 1, 135--218.

%\bibitem[HR23]{HR23}T.~Haines, T.~Richarz, \emph{Normality and Cohen-Macaulayness of parahoric local models}, J. Eur. Math. Soc. (JEMS) 25 (2023), no. 2, 703--729.


\bibitem[HZ23-1]{HZ23-1} X.~He, N.~Zhang,  \emph{Degenerations of Grassmannians via lattice configurations}, International Mathematics Research Notices 2023 (2023), no. 1, 298--349.

\bibitem[HZ23-2]{HZ23-2} X.~He, N.~Zhang,  \emph{Degenerations of Grassmannians via lattice configurations II,} arXiv:2305.00158. 


%\bibitem[Kac85]{Kac85} V.~Kac, \emph{Infinite dimensional Lie algebras}, Cambridge Univ. Press, Cambridge, 1985.

\bibitem[Kar18]{Kar18}
R.Karpman, \emph{Total positivity for the Lagrangian Grassmannian}, Adv. in Appl. Math. {\bf 98} (2018),  25--76.

\bibitem[Kn08]{Kn08} A.~Knutson, \emph{The cyclic Bruhat decomposition of ${\rm Gr}_k(\bC^n)$ from the affine Bruhat decomposition of $AFlag_k^\circ$}, talk at Bert Kostant's 80th birthday conference (2008),  http://pi.math.cornell.edu/~allenk/positroid.pdf 


%\bibitem[KLS13]{KLS13} A.~Knutson, T.~Lam, D.E.~Speyer, \emph{Positroid varieties:  juggling and geometry}, Compos. Math., 149(10):1710--1752, 2013.


%\bibitem[KS14]{KS14} B.~Keller, S.~Scherotzke, \emph{Desingularisations of quiver Grassmannians via graded quiver varieties}, Advances in Mathematics 256 (2014) 318--347.

\bibitem[Kum02]{Kum02}
S. Kumar, Kac–Moody Groups, Their Flag Varieties and Representation Theory. Progress in Mathematics, vol. 204 (Birkh\"auser, Boston, 2002)

%\bibitem[Lam16]{Lam16} T.~Lam, \emph{Totally nonnegative Grassmannian and Grassmann polytopes}, Current developments in mathematics 2014, 51–152, Int. Press, Somerville, MA, 2016.

\bibitem[LP23a]{LP23a} M.~Lanini, A.~P\"utz, \emph{GKM-Theory for Torus Actions on Cyclic Quiver Grassmannians},  \url{http://arxiv.org/abs/2008.13138}, Alg. Number Theory {\bf 17} (2023), no. 12, 2055--2096.

\bibitem[LP23b]{LP23b}M.~Lanini, A.~P\"utz, \emph{Permutation actions on quiver Grassmannians for the equioriented cycle via GKM-theory}, J. Algebr. Comb. \textbf{57} (2023), 915--956.

%\bibitem[Lus94]{Lus94} G.~Lusztig, \emph{Total positivity in reductive groups}, Lie theory and geometry, 531--568, Progr. Math., 123, Birkh\"auser Boston, Boston, MA, 1994.

%\bibitem[Lus98a]{Lus98a} G.~Lusztig, \emph{Total positivity in partial flag manifolds}, Representation Theory 2 (1998) 70--78.

%\bibitem[Lus98b]{Lus98b} G.~Lusztig,\emph{Introduction to total positivity}, in “Positivity in Lie theory: open problems” ed.J.Hilgert et. al., de Gruyter 1998, 133--145.

%\bibitem[Mus78]{Mus78} G.A.~Mustafin,\emph{Nonarchimedean uniformization}, Mathematics of the USSR-Sbornik 34 (1978), no. 2, 187.

\bibitem[M07]{M07}
I.A.~Mihai, \emph{Odd symplectic flag manifolds}, Transform. Groups, {\bf 12} (2007), no. 3, 573 -- 599.

\bibitem[Pa18]{Pa18}
G.~Pappas, \emph{Arithmetic models for Shimura varieties}, Proceedings of the ICM -- Rio 2018. Vol. II.
Invited lectures, 377--398, World Sci. Publ., Hackensack, NJ, 2018.

\bibitem[Pr88]{Pr88}
R.A.~Proctor, \emph{Odd symplectic groups}, Invent. Math. {\bf 92} (1988), no. 2, 307 -- 332.


\bibitem[PRS13]{PRS13}
G.~Pappas, M.~Rapoport, B.~Smithling, \emph{Local models of Shimura varieties},
I. Geometry and combinatorics, Handbook of moduli. Vol. III, Adv. Lect. Math.
(ALM), vol. 26, Int. Press, Somerville, MA, 2013, pp. 135--217.

%\bibitem[PZ13]{PZ13} G.~Pappas, X.~Zhu, \emph{Local models of Shimura varieties and a conjecture of Kottwitz}, Invent. Math. 194 (2013), no. 1, 147--254.


\bibitem[PZ22]{PZ22}
 G.~Pappas, I.~Zachos, \emph{Regular integral models for Shimura varieties of orthogonal type}, Compos. Math. {\bf 158} (2022), no. 4, 831--867.

\bibitem[Pos06]{Pos06} A.~Postnikov, \emph{Total  positivity,   Grassmannians,   and  networks},  Preprint, \url{http://math.mit.edu/~apost/papers/tpgrass.pdf}, 2006.

%\bibitem[Pue19]{Pue2019} A.~P\"utz, \emph{Degenerate Affine Flag Varieties and Quiver Grassmannians}, Dissertation, Ruhr-Universit\"at Bochum (2019). \url{https://doi.org/10.13154/294-6576} 

\bibitem[Pue22]{Pue2022} A.~P\"utz, \emph{Degenerate Affine Flag Varieties and Quiver Grassmannians}, Algebr. Represent. Theory, \textbf{25} (2022), 91--119.

\bibitem[PR23]{PR23} A.~P\"utz, M.~Reineke, \emph{Desingularizations of Quiver Grassmannians for the Equioriented Cycle Quiver}, Pacific J. Math., \textbf{326} (2023), no. 1, 109--133.%\url{http://arxiv.org/abs/2302.05384}.

%\bibitem[Re13]{Re13} M.~Reineke, \emph{Every projective variety is a quiver Grassmannian}, Algebras and Representation Theory, \textbf{ 16} (2013), 1313--1314.


%\bibitem[Rie99]{Rie99} K.~Rietsch, \emph{An algebraic cell decomposition of the nonnegative part of a flag variety},  J. Algebra 213 (1999), no. 1, 144--154.





%\bibitem[S17]{S17} S.~Scherotzke, \emph{Desingularisation of quiver Grassmannians via Nakajima categories}, Algebras and Representation Theory 20, 231--243 (2017).

%\bibitem[Sch14]{Schiffler2014} R.~Schiffler, \emph{Quiver Representations}, CMS Books in Mathematics, Springer-Verlag, Cham, Switzerland, 2014. 


%\bibitem[Scho92]{Scho92}A.~Schofield, \emph{General representations of quivers}, Proc. London Math. Soc. (3) 65 (1992), no. 1, 46--64.

\bibitem[W05]{W05}
L.~Williams, \emph{Enumeration of totally positive Grassmann cells}, 
Adv. Math. {\bf 190} (2005), no. 2, 319--342. 

\bibitem[Zho19]{Zho19}
Q.~Zhou, \emph{Convex polytopes for the central degeneration of the affine Grassmannian}, Adv. Math. {\bf 348} (2019), 541--582.


%\bibitem[Zhu17]{Zhu17}X.~Zhu, \emph{An introduction to affine Grassmannians and the geometric Satake equivalence}, Geometry of moduli spaces and representation theory, 59--154, IAS/Park City Math. Ser., 24, Amer. Math. Soc., Providence, RI, 2017.


\bibitem[Zhu19]{Zhu19}
X.~Zhu, \emph{On the coherence conjecture of Pappas and Rapoport}, Ann. of Math. (2) {\bf 180} (2014), no. 1, 1--85.


\end{thebibliography}
\end{document}